\documentclass{amsart}

\usepackage{amsmath,amssymb,amsthm,amsfonts,enumitem,graphicx,xcolor,subcaption,tikz,bbm}
\usepackage{tikz-3dplot}
\usepackage{mathtools}
\usepackage{mathrsfs}
\usepackage[margin=1.5in]{geometry}
\usepackage[colorinlistoftodos,bordercolor=orange,backgroundcolor=orange!20,linecolor=orange,textsize=scriptsize]{todonotes}
\usepackage{etoolbox}
\makeatletter
\let\ams@starttoc\@starttoc
\makeatother
\usepackage[parfill]{parskip}
\makeatletter
\let\@starttoc\ams@starttoc
\patchcmd{\@starttoc}{\makeatletter}{\makeatletter\parskip\z@}{}{}
\makeatother 
\usepackage{multirow}
\usepackage{tabularx}
\usepackage{hyper ref}
\usepackage{framed}


\setcounter{tocdepth}{1}
 
\let\oldtocsection=\tocsection
  
\let\oldtocsubsection=\tocsubsection
 
\let\oldtocsubsubsection=\tocsubsubsection
 
\renewcommand{\tocsection}[2]{\hspace{0em}\oldtocsection{#1}{#2}}
\renewcommand{\tocsubsection}[2]{\hspace{1em}\oldtocsubsection{#1}{#2}}
\renewcommand{\tocsubsubsection}[2]{\hspace{2em}\oldtocsubsubsection{#1}{#2}}

 
\makeatletter
\def\subsubsection{\@startsection{subsubsection}{3}%
  \z@{.5\linespacing\@plus.7\linespacing}{.1\linespacing}%
  {\normalfont\itshape}}
\makeatother

\newtheorem{theorem}{Theorem}[section]
\newtheorem{notation}[theorem]{General set-up}
\newtheorem{definition}[theorem]{Definition}

\newtheorem{proposition}[theorem]{Proposition}
\newtheorem{lemma}[theorem]{Lemma}
\newtheorem{corollary}[theorem]{Corollary}

\theoremstyle{definition}
\newtheorem{remark}[theorem]{Remark}
\newtheorem{example}[theorem]{Example}

\newtheorem*{theorem*}{Theorem}

\newtheorem*{corollary*}{Corollary}
\newtheorem*{proposition*}{Proposition}
\newtheorem{conjecture}[theorem]{Conjecture}

\definecolor{cAlex}{rgb}{0.1,0.45,0.03}
\definecolor{cChristian}{rgb}{0.7,0.15,0.63}
\definecolor{cLena}{rgb}{0.6,0.5,0.0}


\newcommand{\rem}[1]{} 
\numberwithin{equation}{section}


\newcommand{\Z}{\mathbb{Z}}
\newcommand{\R}{\mathbb{R}}
\newcommand{\C}{\mathbb{C}}
\newcommand{\N}{\mathbb{Z}_{\geq 1}}

\newcommand{\fd}{K}  
\newcommand{\suchthat}{ \ : \ } 
\newcommand{\Q}{\mathbb{Q}}
\newcommand{\fn}{\mathbf{0}}
\newcommand{\origin}{\fn}
\newcommand{\cur}{C} 
\newcommand{\rat}{b} 
\newcommand{\PP}{\mathbb{P}}

\newcommand{\new}{\operatorname{NP}}
\newcommand{\homm}{\theta} 
\newcommand{\dimm}{n} 
\newcommand{\env}[1]{#1^c} 
\newcommand{\Hom}{\operatorname{Hom}} 
\newcommand{\maxi}{\mathfrak{m}}

\newcommand{\pol}{P} 
\newcommand{\conv}{\operatorname{conv}} 
\newcommand{\fan}{\Sigma} 
\newcommand{\ray}{\rho} 
\newcommand{\cone}{\operatorname{cone}} 

\newcommand{\sta}{\operatorname{star}} 
\newcommand{\rg}{u_{\ray}}  
\newcommand{\con}{\sigma} 
\newcommand{\ldc}{\tau} 
\newcommand{\np}{\operatorname{NP}}  

\newcommand{\supp}{\operatorname{supp}} 
\newcommand{\hyp}{H}  
\newcommand{\face}{F} 
\newcommand{\mv}{\operatorname{MV}} 
\newcommand{\nor}[1]{\linf_{#1}} 
\newcommand{\sun}[2]{\operatorname{sun}(#1,#2)} 
\newcommand{\ver}{\operatorname{vert}} 
\newcommand{\llen}{\operatorname{length}} 
\newcommand{\core}{\operatorname{core}} 
\newcommand{\dir}{v} 
\newcommand{\ls}{L} 
\newcommand{\relw}[2]{\operatorname{length}(#1,#2)} 
\newcommand{\wid}[1]{\operatorname{width}_{#1}} 
\newcommand{\linf}{u} 
\newcommand{\relint}{\operatorname{relint}} 
\newcommand{\feasible}[3]{#1_{#2}(#3)} 

\newcommand{\lat}{N} 
\newcommand{\latm}{M} 
\newcommand{\tor}{\mathbb{T}} 
\newcommand{\orbc}[1]{V(#1)} 
\newcommand{\var}{X} 
\newcommand{\cox}{S} 
\newcommand{\sv}{Z} 
\newcommand{\point}{z} 
\newcommand{\hir}{\mathscr{H}} 

\newcommand{\flag}{Y_{\bullet}} 
\newcommand{\zdn}[1]{N_{#1}} 
\newcommand{\zn}[2]{#1^{-}_{#2}} 
\newcommand{\zp}[2]{#1^{+}_{#2}} 
\newcommand{\nob}[2]{\Delta_{#1}(#2)}
\newcommand{\fct}{\varphi} 
\newcommand{\val}{\operatorname{val}} 
\newcommand{\seh}{\varepsilon} 
\newcommand{\op}{m} 
\newcommand{\gen}{R} 
\newcommand{\vertex}{p} 
\newcommand{\pwl}{\Psi}  
\newcommand{\djs}[2]{\mathfrak{s}(#1,#2)} 
\newcommand{\qco}{\eta} 
\renewcommand{\a}{a} 
\renewcommand{\b}{b} 
\newcommand{\adjoint}[2]{#1^{\operatorname{FA}\!(#2)}} 
\newcommand{\codeg}[1]{\qco^{\operatorname{F}}\!(#1)}   
\newcommand{\fineCore}[1]{\core^{\operatorname{F}}\!(#1)} 

\newcommand{\ord}{\operatorname{ord}}
\newcommand{\ds}{D} 
\newcommand{\she}{\mathcal{O}} 
\newcommand{\spf}{\operatorname{SF}} 
\newcommand{\dsl}{L} 
\newcommand{\divof}{\operatorname{div}} 
\newcommand{\lb}{\mathscr{L}} 
\newcommand{\dc}{a} 
\newcommand{\lbv}{V_{\lb}} 
\newcommand{\divp}[1]{P_{#1}} 
\newcommand{\cand}[1]{K_{#1}} 
\newcommand{\sps}{H^0} 
\newcommand{\bcone}{\operatorname{Big}} 

\title[Toric Newton--Okounkov functions]{Toric Newton--Okounkov functions with an application to the rationality of certain Seshadri constants on surfaces}
	
	\author{Christian Haase}
\address{Christian Haase, Fachbereich Mathematik und Informatik, Freie Universit\"at Berlin, Arnimallee 3, 14195 Berlin, Germany}	
	\email{haase@math.fu-berlin.de}
	
	\author{Alex K\"uronya}
	\address{Alex K\"uronya, Institut f\"ur Mathematik, Goethe--Universit\"at Frankfurt, Robert-Mayer-Str. 6--10, D-60325 Frankfurt am Main,  Germany}
\email{kuronya@math.uni-frankfurt.de}

	\author{Lena Walter}
	\address{Lena Walter, Fachbereich Mathematik und Informatik, Freie Universit\"at Berlin, Arnimallee 3, 14195 Berlin, Germany}
\email{lenawalter@math.fu-berlin.de}

\begin{document}

\begin{abstract}
We initiate a combinatorial study of Newton--Okounkov functions on
toric varieties with an eye on the rationality of asymptotic
invariants of line bundles. In the course of our efforts we identify a
combinatorial condition which ensures a controlled behavior of the
appropriate Newton--Okounkov function on a toric surface.
Our approach yields the rationality of many Seshadri constants that
have not been settled before.
\end{abstract}

\maketitle

\tableofcontents

\section{Introduction}

In the present paper, we start to develop methods to determine
Newton--Okounkov functions in the case of toric varieties. As a
by-product, we can show rationality of Seshadri constants for many new
examples of toric surfaces.\\
\\
Newton--Okounkov bodies are convex bodies which encode various facets of
algebraic and symplectic geometry, such as the local positivity of line 
bundles on varieties and going as far as geometric
quantization~\cite{HHK}. To be more specific, it is possible to gain
information on asymptotic invariants (Seshadri constants,
pseudo-effective thresholds, Diophantine approximation constants) from
well-chosen Newton--Okounkov bodies and concave
(Newton--Okounkov) func\-tions on them.\\
\\
Over the past decade, Newton--Okounkov theory has attracted a lot of
attention. Many deep structural results have been proven, extracting
information about varieties and their line bundles from
Newton--Okounkov bodies. At the same time it also became apparent that
it is often very difficult to obtain precise information about
Newton--Okounkov bodies and Newton--Okounkov functions in
concrete cases.

\subsubsection*{Newton--Okounkov Functions}
Newton--Okounkov functions are concave functions on Newton--Okounkov
bodies arising  from  multiplicative filtrations on the section ring
of a line bundle. They have proven to be more evasive than
Newton--Okoun\-kov bodies themselves.\\
\\
The first definition of Newton--Okounkov functions (in other terminology, concave transforms of multiplicative filtrations) in print is due to Boucksom--Chen~\cite{BC}. These functions on the Newton--Okounkov body yield refined
information~\cite{BKMS,DKMS2,KFuj,KMSwB,KMR,McKR} about the arithmetic and the geometry of the underlying variety.

Already the most basic invariants of Newton--Okounkov functions
contain highly non-trivial information. Perhaps the most notable
example is the average of such a function --- called the $\beta$-invariant of the line bundle and the filtration --- ,  which is closely related to
Diophantine approximation~\cite{McKR}, and K-stability~\cite{KFuj}. 
By the  connection of the $\beta$-invariant to Seshadri constants~\cite{KMR}, 
its rationality could decide Nagata's conjecture~\cite{DKMS}. Not surprisingly, concrete descriptions of these functions are very
hard to obtain.\\
\\
A structure theorem of~\cite{KMR} identifies the subgraph of a
Newton--Okounkov function coming from a geometric situation as
the Newton--Okounkov body of a projective bundle over the variety in
question. (Compare Section~\ref{sec:subgraph} below.)\\
\\
Based on earlier work of Donaldson~\cite{Don}, Witt--Nystr\"om~\cite{WN} made the observation that the Newton--Okounkov function
coming from a fully toric situation (meaning all of the line bundle, admissible flag, and filtration are torus-invariant) 
is piecewise affine linear with rational coefficients on the
underlying Newton--Okounkov body, which happens to coincide with the appropriate moment polytope. In this very special
situation, the function is in fact linear. (Compare Proposition~\ref{prop_distance} 
below.)\\
\\
The next interesting case arises when we keep the toric polytope (that is, we work with a torus-invariant line bundle and a torus-invariant admissible flag),
but we consider the order of vanishing at a general point to define the
function. To our knowledge, no such function has been computed for
toric varieties other than projective space.\\
\\
 While the usual dictionary between geometry and combinatorics is very
 effective in explaining torus-invariant geometry, when it comes to
 non-torus-invariant phenomena, one does need the more general framework
 of Newton--Okounkov theory. Broadly speaking Newton--Okounkov theory
 would be toric geometry without a torus action; in more technical
 terms Newton--Okounkov theory replaces the natural gradings on
 cohomology spaces by filtrations.\\
\\
In determining Newton--Okounkov functions in a not completely toric setting, our first goal is to devise a strategy to determine 
Newton--Okounkov functions associated to orders of vanishing  on toric surfaces, and to apply it to
interesting examples. The trick is to avoid blowing up the valuation
point,  which could result in losing control of the Mori cone.\\
 \\
Instead, we change the flag defining the Newton--Okounkov
polytope to one which contains the valuation point and show that there
is a piecewise linear transformation of the moment polytope into the
new Newton--Okounkov polytope (see Corollary~\ref{cor_mv}). This is
reminiscent of the transformation constructed in~\cite{escobar2019wall}. But the connection is, as of yet, unclear.
It is worth mentioning  at this point that certain pairs of
subgraphs of Newton--Okounkov functions associated to torus-invariant
and non-torus-invariant flags happen to be equidecomposable
(compare Remark~\ref{rem:equidecomposable}). This is an exciting and
unexpected phenomenon with possible ties to the mutations studied in~\cite{CFKLRS}.
We offer a conjectural explanation for this phenomenon.\\
\\
We can then employ arguments from convex geometry to provide upper and
lower bounds for the desired function. We study combinatorial
conditions which guarantee that the obtained upper and lower bounds
agree.\\
\\
In the case of anti-blocking polyhedra in the sense of
Fulkerson~\cite{FulkersonBlockingAntiBlocking,FulkersonAntiBlocking}
we obtain a particularly easy answer.
Nevertheless, the strategy works much more generally.

\begin{theorem*}[Newton--Okounkov functions on toric surfaces, Theorem~\ref{prop_a+b}, Corollary~\ref{cor_par}]
Let $\var$ be a smooth projective toric surface, $\ds$ an ample
divisor, and $\flag$ an admissible torus-invariant flag on $\var$
so that the Newton--Okounkov body
$\nob{\flag}{\ds}$
is anti-blocking.

Let $\flag'$ be a torus-invariant flag opposite to the origin.
Then the Newton--Okounkov function
$\fct_R$ on $\nob{\flag'}{\ds}$ coming from the geometric valuation
$\ord_R$ in a general point $R\in \var$ is linear with integral
slope.
\end{theorem*}

Along the way, we formulate and prove existence and uniqueness of
Zariski decomposition on toric surfaces in the language of  polyhedra. One
can `see' the decomposition in terms of the polygons, compare Theorem~\ref{thm_toriczd}.

\subsubsection*{Local Positivity}
Newton--Okounkov theory reveals a lot about positivity properties of  line
bundles. Just like in the toric case,  one can use    convex
geometric information to  decide for instance if the  underlying line bundle  is ample or nef \cite{KL_inf,KL_noninf}, 
one can even  obtain localized information. 
A line bundle is called positive or ample at a point of our variety if global sections of a
high enough multiple yield an embedding of an open neighborhood of
the point. Local positivity can be decided and measured via Newton--Okounkov bodies
\cite{KL_inf,KL_noninf,Roe}. \\
\\
Local positivity is traditionally measured by Seshadri constants~\cite{KL_Geom,lazarsfeld2004positivity}. Originally invented by
Demailly~\cite{Dem92} to attack Fujita's conjecture on global generation,
Seshadri constants have become the main numerical asymptotic positivity
invariant (compare~\cite[Chapter
5]{lazarsfeld2004positivity},\cite{BauerSeshadri}).
While there has been considerable interest in this invariant's
behavior, many of its properties are still shrouded in
mystery~\cite{Szemberg}.\\
\\
One interesting question about Seshadri constants is if they are
always rational numbers. This is widely believed to be false, but
there has only been sporadic progress towards this issue. On surfaces,
the rationality of Seshadri constants would imply the failure of
Nagata's conjecture~\cite{DKMS}. \\
\\
The rationality of Seshadri constants and related asymptotic invariants often follows from finite generation of an 
appropriate multi-graded ring or semigroup~\cite{ELMNP,CasciniLazic}. However, finite generation questions tend to be wide open, and are typically skew to the major finite generation theorems of birational geometry. \\
\\
In the literature around Newton--Okounkov bodies  the involved valuation semigroups are frequently
assumed, from the outset, to be finitely generated (see~\cite{HK,KavehManon}, this is to obtain a toric degeneration as in~\cite{And}). However,  deciding
finite generation of multigraded algebras or semigroups arising from a geometric setting  is an utterly hard question.\\
\\
We obtain results on the rationality of Seshadri constants in general
points of toric surfaces using asymptotic considerations and convexity
to circumvent some of these difficulties.\\
\\
Previously, Ito~\cite{ito13, ito14}, Lundman~\cite{lundman2015computing}, and Sano~\cite{sano2014seshadri} have verified rationality of these same
Seshadri constants for restricted classes of (line bundles on) toric
surfaces.
As it turns out, a condition we call `weakly zonotopally well-covered',
is sufficient to guarantee rationality of the
Seshadri constant. 


\begin{theorem*}[Rationality of certain Seshadri constants, Theorem~\ref{thm_seshadri}]
Let $\var$ be a smooth projective toric surface and $\ds$ an ample torus-invariant divisor on $\var$ with associated Newton--Okounkov body $\nob{\flag}{\ds}$ for an admissible torus-invariant flag $\flag$. If the polytope $\nob{\flag}{\ds}$ is weakly zonotopally well-covered, then
\begin{enumerate}
    \item we can determine $\int_{\nob{\flag}{\ds}}{\fct_\gen}$. 
    \item the Seshadri constant $\seh(\var, \ds;\gen)$ is rational. 
    \item the maximum $\max_{\nob{\flag}{\ds}}{\fct_\gen}$ is attained at the boundary of $\nob{\flag}{\ds}$.
\end{enumerate}
\end{theorem*}

This theorem reproves some of the cases covered
in~\cite{ito13,ito14,lundman2015computing,sano2014seshadri}
and adds many new
cases, even some, where we can only conjecture what the
Newton--Okounkov function looks like. We construct specific examples
where the methods of Ito, Sano, or Lundman do not apply, see Theorem~\ref{thm:seshadri-construction}.

\subsubsection*{Organization of the Paper}

We start in Section~\ref{sec_background_notation} by fixing notation and giving necessary background information. Since our work sits on the fence between two areas, we give ample information on both. Section~\ref{sec_nob_toric} is devoted to a self-contained combinatorial proof of Zariski decomposition on toric surfaces and the existence of the `tilting isomorphism' between certain Newton--Okounkov bodies. In Section~\ref{sec_function} we give description of Newton--Okounkov functions/concave transforms in the two relevant cases: when every actor is torus-invariant (Subsection~\ref{sec:torictoric}) and when we are looking at the order of vanishing filtration coming from a general point (Subsection~\ref{sec:general-point}). The latter part contains the outline of our general strategy. Moreover, we give a polyhedral description of the interpretation of a subgraph as a Newton--Okounkov body (Subsection~\ref{sec:subgraph}). Section~\ref{sec:seshadri} contains the application of our results on Newton--Okounkov functions to the rationality of Seshadri constants. 
 
\subsubsection*{Acknowledgments}

We would like to thank Joaquim Ro\'e and Sandra Di Rocco for helpful
discussions, and Atsushi Ito for important remarks on a previous version of this manuscript. Ito informed us that he computed the Seshardri constants for a specific sequence of toric surfaces of the kind we construct in Theorem~\ref{thm:seshadri-construction} (unpublished).

The second author was partially supported by the LOEWE
grant `Uniformized Structures in Algebra and Geometry'.
This cooperation started at the coincident Mini-workshops `Positivity
in Higher-dimensional Geometry: Higher-codimensional Cycles and
Newton-Okounkov Bodies' and `Lattice Polytopes: Methods, Advances,
Applications' at the Mathematische Forschungsinstitut Oberwolfach in
2017. We appreciate the stimulating atmosphere and the excellent
working conditions.

\section{Background and Notation}\label{sec_background_notation}

We work over an algebraically closed field $\fd$. Although no arguments  depend on characteristic zero,  for convenience we will assume $\fd= \C$. 

\subsection{Toric Varieties}
We review some basic results and fix notation regarding toric varieties. We follow the conventions used in \cite{cox2011toric}.

Let $\var$ be an $\dimm$--dimensional smooth projective \emph{toric} variety. Then $\var= \var_\fan$ is determined by a complete unimodular fan $\fan$ in $\lat_\R = \lat \otimes_\Z \R \simeq \R^\dimm$, where $\lat \simeq \Z^\dimm$ denotes the underlying lattice of one--parameter subgroups. Its dual, the lattice of characters is denoted by $\latm= \Hom_\Z(\lat,\Z)$ and the associated vector space by $\latm_\R=\latm \otimes_\Z \R$. We denote the underlying torus by $\tor= \lat \otimes_\Z \fd ^*$. 

Let $\fan(i)$ denote the set of $i$--dimensional cones of the fan. Each ray $\ray \in \fan(1)$ is determined by a primitive ray generator $\rg \in \lat$. Since $\fan$ is unimodular, the primitive ray generators of each maximal cone $\con \in \fan$ form a basis of $\lat$. The toric patches will be denoted by $U_\con$ for $\con \in \fan$.

Since $\var$ is smooth, all Weil divisors are Cartier, i.e. $\text{Pic}(\var)=\text{Cl}(\var)$. Due to the Orbit-Cone correspondence a ray $\ray \in \fan(1)$ gives a codimension-one orbit whose closure $\orbc{\ray}$ is a torus-invariant prime divisor on $\var$ which we denote by $\ds_\ray$. We write  $\cand{\var} = -\sum_{\ray}{\ds_\ray}$ for the canonical divisor on $\var$.  

Given a torus-invariant divisor $\ds=\sum_\ray{\dc_\ray \ds_\ray}$ on $\var$, it determines a polytope
\begin{equation}\label{eqn_divpoly}
\divp{\ds} \coloneqq \left\{ m \in \latm_\R \suchthat \langle m, u_\ray \rangle \geq -\dc_\ray \text{ for all } \ray \in \fan(1) \right\}\ ,
\end{equation}
since $\fan$ is complete. Denote the normal fan of $\divp{\ds}$ by $\fan_{\divp{\ds}}$.

To a Cartier divisor $\ds$ on $ \var$ we can associate the sheaf $\she_\var(\ds)$, which is the sheaf of sections of a line bundle $\pi \colon \lbv \to \var$ which we will denote by $\lb$ for short. We can describe a Cartier divisor $\ds=\sum_{\ray}{a_\ray\ds_\ray}$ in terms of its support function $\spf_\ds \colon |\fan| \to \R$, which is linear on each $\con \in \fan$ with $\spf_\ds(u_\ray)=-a_\ray$ for all $\ray \in \fan(1)$. A Cartier divisor is determined by its Cartier data $\{m_\con\}_{\con\in \fan}$, where the $m_\con$ satisfy $\ds|_{U_\con}= \divof (\chi^{-m_\con})|_{U_\con}$ for all $\con \in \fan$. The vector space of global sections arises from the characters for the lattice points inside the polytope, namely 
\begin{equation}\label{enq_basis_sec}
\sps(\var,\she_\var(\ds))= \bigoplus_{m \in \divp{\ds} \cap \latm}{\C \cdot \chi^m}\ .
\end{equation} 
A divisor $\ds$ is big if and only if its associated polytope $\divp{\ds}$ is full-dimensional, it is ample if and only if the normal fan $\fan_{\divp{\ds}}$ of the polytope $\divp{\ds}$  is exactly $\fan$, and  it is nef precisely if all defining inequalities of $\divp{\ds}$ are tight and the fan $\fan$ is a refinement of the normal fan $\fan_{\divp{\ds}}$.   

Let $\ds$ be an ample divisor on $\var$ with corresponding polytope $\divp{\ds}$. Then for each vertex $p \in \ver(\divp{\ds})$ the primitive vectors $m_1^p,\ldots,m_\dimm^p$ in its adjacent edge directions are a lattice basis for $\latm$, and therefore specify an associated coordinate system of the space $\latm_\R$. 

For a strongly convex rational polyhedral cone $\ldc$ in $\lat_\R$, let $\lat_\ldc$ be the sublattice of $\lat$ spanned by points in $\lat \cap \ldc$. Then we denote the quotient lattice by $\lat(\ldc)=\lat / \lat_\ldc$. Let $\fan$ be a fan in $\lat_\R$ and $\ldc \in \fan$. We consider the quotient map $\lat_\R \to \lat(\ldc)_\R$ and denote by $\overline{\con}$ the image of a cone $\con \in \fan$ containing $\ldc$. Then
\begin{equation*}
\sta(\ldc) \coloneqq \{\overline{\con} \subseteq \lat(\ldc)_\R \suchthat \ldc \preceq \con \in \fan\}
\end{equation*} 
is a fan in $\lat(\ldc)_\R$.

There is again a toric variety associated to this fan. Let $P \subseteq \latm_\R$ be a full dimensional lattice polytope with normal fan $\fan_P$ and associated toric variety $\var_P$. Then each face $Q \preceq P$ corresponds to a cone $\con_Q \in \fan_P$. According to Propositions 3.2.7 and 3.2.9 in \cite{cox2011toric} we obtain  isomorphisms 
\begin{equation*}
\var_{\sta(\con_Q)} \simeq \orbc{\con_Q} \simeq \var_Q
\end{equation*} 
between the resulting varieties, where $\var_Q$ is the variety that is associated to the lattice polytope $Q$. 

\subsection{Measuring Polytopes}

Let $\pol \subseteq \latm_\R$ be a polytope. The \emph{width of $\pol$
  with respect to a linear functional $\linf \in \lat$} is defined as
\begin{equation*}
\wid{\linf}(\pol) \coloneqq \max_{\vertex,T \in \pol}{|\linf(\vertex)-\linf(T)|}.
\end{equation*} 
For a rational line segment $\ls$ there is the notion of \emph{lattice
  length}, denoted by $\llen_{\latm}(\ls)$. Let therefore $\ls$ be the
segment connecting the rational points $\vertex,T \in \latm_\Q$ and
denote by $m \in \latm$ the shortest lattice vector on the ray spanned
by $\vertex-T$. Then we define $\llen_{\latm}(\ls) \coloneqq |j|$, where
$j \in \Q $ such that $\vertex-T=j m $.

Let $\dir \in \Z^\dimm$ be a primitive vector.
For a point $\op \in \pol$ we define the
\emph{length of $\pol$ at $\op$ with respect to $\dir$} to be
\begin{equation*}
\relw{\pol,\op}{\dir}\coloneqq \max{ \left\{ t \in \R \suchthat \op +
    t\dir \in \pol \right\} }, 
\end{equation*}
and $\relw{\pol}{\dir}$ is the maximal length over all $\op \in \pol$.


\subsection{Newton--Okounkov Bodies}
The rich theory of toric varieties provides a very useful dictionary between algebraic geometry and convex geometry. In the 90's in \cite{Ok1} Okounkov laid down the groundwork to generalize this idea to arbitrary projective varieties motivated by questions coming from representation theory. Based on this, Lazarsfeld--Musta\c{t}\u{a} \cite{MR2571958} and Kaveh--Khovanskii \cite{kaveh2012newton} independently developed a systematic theory of Newton--Okounkov bodies about ten years later. It lets one assign a collection of convex bodies to a given pair $(\var,\ds)$ that captures much of the asymptotic information about its geometry.  We follow that notation of \cite{MR2571958} (see also \cite{Bou14,KL_Geom}). 

Given an  irreducible projective variety $\var$ of dimension $\dimm$ and  an admissible flag 
\begin{equation*}
\flag \colon \var= Y_0 \supseteq Y_1 \supseteq \cdots \supseteq Y_\dimm
\end{equation*} 
of irreducible subvarieties, one defines the Newton--Okounkov body $\nob{\flag}{\ds}$ of  a big Cartier divisor $\ds$  on $\var$ in the following way.

\begin{definition}
The \emph{Newton--Okounkov body} $\nob{\flag}{\ds}$ of $\ds$ (with respect to the flag $\flag$) is defined to be the set
\begin{equation*}
\nob{\flag}{\ds} \coloneqq \overline{\bigcup_{k \geq 1}{ \frac{1}{k} \left\{ \val_{\flag} (s) \suchthat s \in \sps(\var,\she_\var(k\ds))\setminus \{0\} \right\} }} \subseteq \R^\dimm.
\end{equation*}
\end{definition}

\begin{example}\label{ex_build_nob}
We consider the Hirzebruch surface $\var= \hir_1$  associated to the fan in Figure~\ref{fig_fan_hir1}, where the torus-invariant prime divisor $\ds_i$ corresponds to the ray $\ray_i \in \fan(1)$ for $1 \leq i\leq 4$.  

\begin{figure}[h!]
\begin{center}
    \includegraphics[scale=0.11]{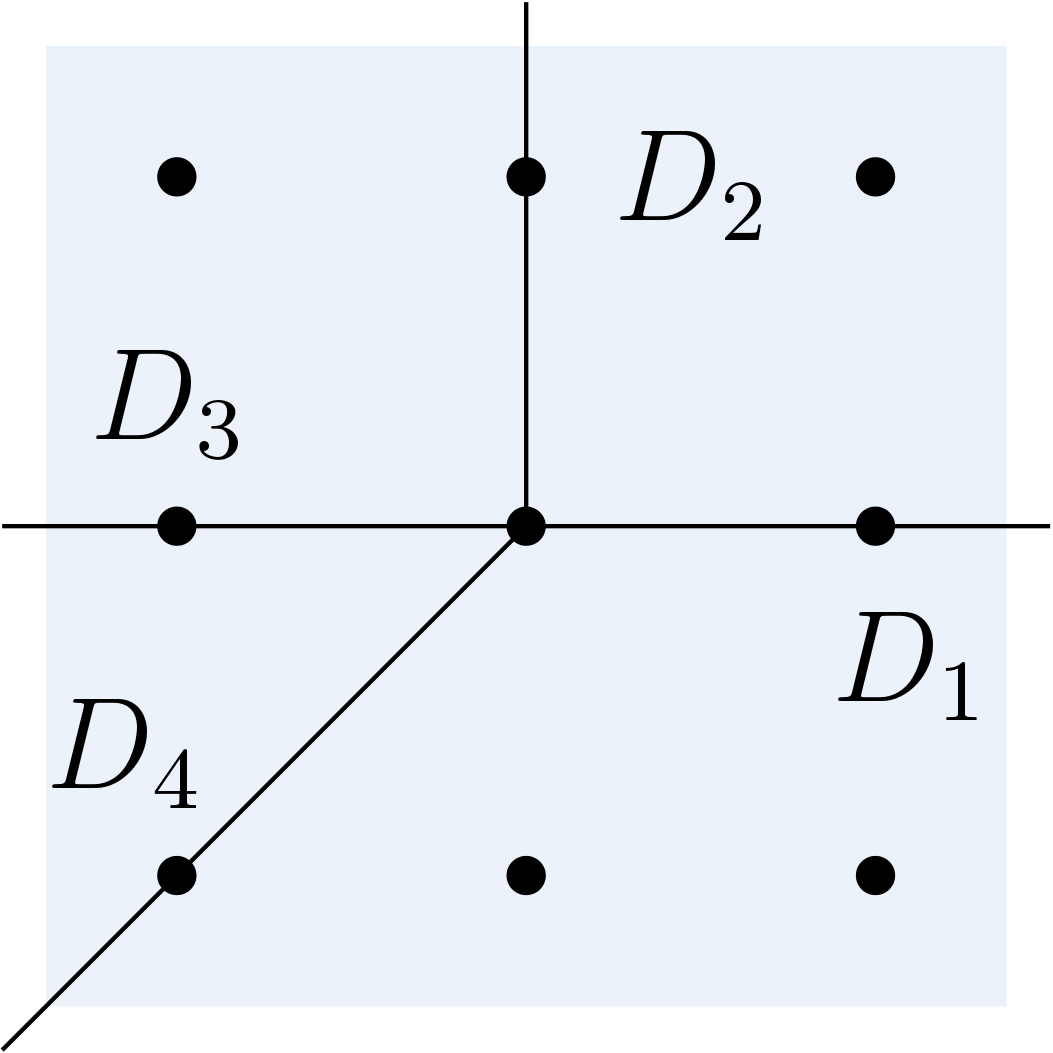}
\caption{The fan $\fan$ of the first Hirzebruch surface $\var_\fan= \hir_1$.} \label{fig_fan_hir1}
\end{center}
\end{figure} 

As an admissible flag $\flag$ choose the curve $Y_1=\ds_1$ and $Y_2=\ds_1 \cap \ds_2$ as a smooth point on it. Then we have a local system of coordinates $x,y$ such that $Y_1= \overline{\{x=0\}}$ and $Y_2=(0,0)$. The ample divisor $\ds=\ds_3+2\ds_4$ determines a polytope $\divp{\ds}$ and, due to~(\ref{enq_basis_sec}), the global sections of $\sps(\var,\she_\var(\ds))$ involve the monomials $1,x,y,xy,$ and $y^2$, given in local coordinates as depicted in Figure~\ref{fig_globalsections}. 
The global section $s(x,y)=2x+8xy$ for instance gets mapped to $(1,0)$ by the map $\val_{\flag}$, because its divisible by $x$, but $s_1(x,y)=2+8y$ is not divisible by $y$. Altogether, computing the Newton--Okounkov body $\nob{\flag}{\ds}$ recovers the polytope $\divp{\ds}$, which is not a coincidence.
\begin{figure}[h!]
\begin{center}
    \includegraphics[scale=0.11]{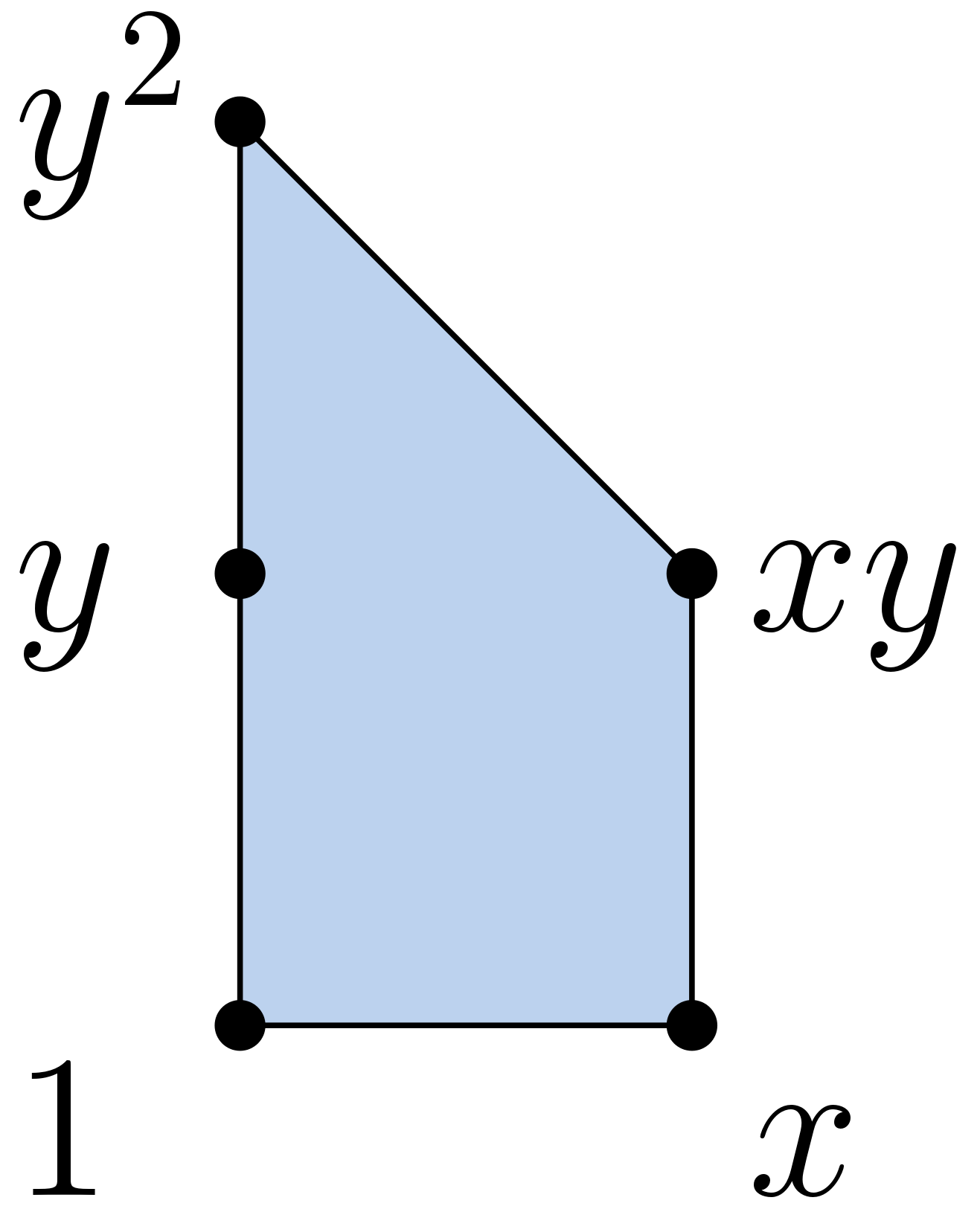}
\caption{The polytope $\divp{\ds}$ with the monomials  corresponding to its lattice points.} \label{fig_globalsections}
\end{center}
\end{figure} 
\end{example}

Given the data $\var$, $\flag$ and $\ds$ there is no straightforward way to compute the corresponding Newton--Okounkov body that works in general. For the case of surfaces the existence of Zariski decomposition leads to a promising approach. In its original form it goes back to Zariski \cite{10.2307/1970376}, where he showed how to uniquely decompose a given effective $\Q$--divisor $\ds$ into a positive part $\zp{\ds}{}$ and a negative part $\zn{\ds}{}$. This result was reproved by Bauer \cite{bauer2009simple} and also Fujita provided an alternative proof in \cite{fujita1979zariski} which also extends to pseudo-effective $\R$--divisors. Here we review the statement in its most general form.

\begin{theorem}[\cite{kawamata1987}, Theorem 7.3.1]\label{thm_zd}
Let $\ds$ be a pseudo-effective $\R$-divisor on a smooth projective surface. Then there exists a unique effective $\R$-divisor
\begin{equation*} 
\zn{\ds}{}=\sum_{i=1}^{\ell}{a_i N_i} 
\end{equation*}
such that 
\begin{enumerate}
	\item $\zp{\ds}{} = \ds-\zn{\ds}{}$ is nef,
	\item $\zn{\ds}{}$ is either zero or its intersection matrix $(N_i . N_j)_{i,j}$ is negative definite,
	\item $\zp{\ds}{} . N_i=0$ for $i \in \{1,\ldots,\ell\}$.
\end{enumerate}
Furthermore, $\zn{\ds}{}$ is uniquely determined as a cycle by the numerical equivalence class of $\ds$; if $\ds$ is a $\Q$-divisor, then so are $\zp{\ds}{}$ and $\zn{\ds}{}$. The decomposition 
\begin{equation*}
\ds=\zp{\ds}{}+\zn{\ds}{}
\end{equation*}
is called the \emph{Zariski decomposition} of $\ds$.
\end{theorem}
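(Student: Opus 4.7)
The plan is to prove existence and uniqueness separately, following the classical strategy going back to Zariski and streamlined in~\cite{bauer2009simple}: identify the support of the negative part as a finite set of curves, solve for its coefficients through a well-conditioned linear system, and verify nefness of the remainder by a contradiction argument.

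\textbf{Finite support and coefficients.} Let $\mathcal{N}$ be the set of irreducible curves $C$ on the surface with $\ds \cdot C < 0$. First I would show that every such $C$ satisfies $C^2 < 0$: since $\ds$ is pseudo-effective but $\ds \cdot C < 0$, $C$ must appear as a fixed component of any effective approximation of $\ds$. The Hodge index theorem then forces the intersection matrix $(N_i \cdot N_j)$ of every finite subset of $\mathcal{N}$ to be negative definite, so the $N_i$ are linearly independent in $\nss(\var) \otimes \R$; hence $\mathcal{N} = \{N_1, \ldots, N_\ell\}$ is finite. Next, I would solve for $a_i \geq 0$ such that $P := \ds - \sum a_i N_i$ satisfies $P \cdot N_j = 0$ for every $j$. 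This is the linear system $M \mathbf{a} = \mathbf{b}$ with $M_{ij} = N_i \cdot N_j$ and $b_i = \ds \cdot N_i < 0$. Since $-M$ is symmetric, positive definite, and has non-positive off-diagonal entries (distinct irreducibles meet non-negatively) --- a non-singular $M$-matrix --- its inverse has non-negative entries, so $\mathbf{a} = (-M)^{-1}(-\mathbf{b}) \geq 0$, as required.

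\textbf{Nefness --- the main obstacle.} The hardest step will be verifying that $P$ is nef, since by construction I only know $P \cdot N_i = 0$ for the chosen components. For an irreducible curve $C$ not among the $N_i$, I would argue by contradiction: if $P \cdot C < 0$ then $P$ (still pseudo-effective as $\ds$ minus an effective divisor) forces $C^2 < 0$, so the enlarged intersection matrix on $\{N_1, \ldots, N_\ell, C\}$ remains negative definite. Splitting $\ds \cdot C = P \cdot C + \sum a_i (N_i \cdot C)$ with the second sum non-negative, either $\ds \cdot C < 0$ directly, contradicting the maximality of $\mathcal{N}$, or the required estimate $\sum a_i (N_i \cdot C) > -P \cdot C > 0$ collides with the $a_i \geq 0$ bound through a Farkas-type argument inside the negative-definite span.

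\textbf{Uniqueness, rationality, numerical invariance.} Given two decompositions $\ds = P + N = P' + N'$, I would intersect $P - P' = N' - N$ against every component of $N + N'$ and use negative-definiteness on the union of supports to force coefficient-wise equality. Rationality when $\ds$ is a $\Q$-divisor is immediate from $M \mathbf{a} = \mathbf{b}$ with $M$ integral and $\mathbf{b}$ rational, and numerical invariance is clear because the entire construction depends only on the intersection numbers $\ds \cdot N_i$.
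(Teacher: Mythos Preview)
The paper does not supply its own proof of this theorem: it is quoted as background from \cite{kawamata1987} (with pointers to Bauer~\cite{bauer2009simple} and Fujita~\cite{fujita1979zariski}), and the paper's original contribution is the separate combinatorial proof of the \emph{toric} special case in Theorem~\ref{thm_toriczd}, which proceeds by reading the positive part directly off the polytope $\divp{\ds}$. So there is no ``paper's proof'' to compare against for this general statement; what remains is to assess your argument on its own merits.

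There is a genuine gap in your construction of $\zn{\ds}{}$. You take $\mathcal{N}$ to be the set of irreducible curves with $\ds\cdot C<0$, solve once for the coefficients, and then try to argue nefness of $P$. But the support of the true negative part is \emph{not} in general contained in this $\mathcal{N}$. Concretely, take two $(-2)$-curves $N_1,N_2$ with $N_1\cdot N_2=1$, a nef class $P_0$ orthogonal to both, and $\ds=P_0+2N_1+N_2$. Then $\ds\cdot N_1=-3<0$ while $\ds\cdot N_2=0$, so your $\mathcal{N}=\{N_1\}$. Solving $(\ds-a_1N_1)\cdot N_1=0$ gives $a_1=3/2$, and the resulting $P=\ds-\tfrac32 N_1$ satisfies $P\cdot N_2=-\tfrac32<0$: not nef. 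Your fallback ``Farkas-type argument'' cannot rescue this, because here $\ds\cdot N_2=0$ gives exactly $\sum a_i(N_i\cdot N_2)=-P\cdot N_2$ with equality, producing no contradiction. The classical fix is either to iterate (enlarge the support whenever the candidate $P$ hits a new curve negatively, and prove termination) or to follow Bauer's actual strategy and characterise $\zp{\ds}{}$ as the \emph{maximal} nef subdivisor of $\ds$, which sidesteps the need to guess the support in advance. A smaller issue: ``$P$ is still pseudo-effective as $\ds$ minus an effective divisor'' is not a valid inference in general.
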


To obtain information about the shape of the resulting Newton--Okounkov body it is important to know how that decomposition varies once we perturb the divisor. Let $\cur=Y_1$ denote the curve in the flag. Start at $\ds$, move in direction of $-\cur$ towards the boundary of the big cone $\bcone(\var)$ and keep track of the variation of the Zariski decomposition of $\ds_t\coloneqq \ds - t \cur$. For more details see \cite{BKS}, also \cite[Chapter 2]{KL_Geom}. The next  applies this procedure in order to compute the Newton--Okounkov body $\nob{\flag}{\ds}$.

\begin{theorem}[\cite{MR2571958}, Theorem 6.4]\label{thm_function}

Let $\var$ be a smooth projective surface, $\ds$ a big divisor (or more generally, a big divisor class), $Y_{\bullet} \colon \var \supseteq \cur \supseteq \{\point\}$ an admissible flag on $\var$. Then there exist continuous functions $\alpha,\beta \colon \left[ \nu, \mu \right] \to \R_{\geq 0}$ such that $0 \leq \nu \leq \mu =: \mu_C(\ds)$ are real numbers, 
\begin{enumerate}
	\item $\nu=$ the coefficient of $C$ in $\zn{\ds}{}$,
	\item $\alpha(t)= \ord_\point \zn{\ds}{t}|_C,$
	\item $\beta(t)= \alpha(t) + (\zp{\ds}{t}. C)$.
\end{enumerate}
Then the associated Newton--Okounkov body is given by 
\begin{equation*}
 \nob{\flag}{\ds}=\left\{ (t,m) \in \R^2 \ | \ \nu \leq t \leq \mu, \ \alpha(t) \leq m \leq \beta(t) \right\}.
\end{equation*}
Moreover, $\alpha$ is convex, $\beta$ is concave and both are piecewise linear. 
\end{theorem}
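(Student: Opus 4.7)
The plan is to understand the fiber of $\nob{\flag}{\ds}$ over a fixed $t$-coordinate and to use Zariski decomposition to compute it. Fix $t \in \Q_{\geq 0}$ and consider the slice
\[
\nob{\flag}{\ds}_t \;=\; \{m \in \R : (t,m) \in \nob{\flag}{\ds}\}.
\]
By construction, a point $(t,m)$ with $t,m$ rational lies in the body if and only if for some divisible $k$ there is $s \in \sps(\var,\she_\var(k\ds))$ with $\ord_C(s) = kt$ and $\ord_z(s_1) = km$, where $s_1$ is the induced section on $C$ after dividing by the local equation of $C$ to order $kt$. Such sections are in natural bijection with sections of $\she_\var(k(\ds-tC))$ that do not vanish identically on $C$, and restriction then identifies them with a subspace of $\sps(C,\she_C(k(\ds-t C)|_C))$. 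So the fiber over $t$ measures the asymptotic image of the restriction $H^0(X,k\ds_t) \to H^0(C,k\ds_t|_C)$, together with the order of vanishing of these sections at $z$.

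The next step is to bring in the Zariski decomposition $\ds_t = \zp{\ds}{t}+\zn{\ds}{t}$. For any section $s$ of $k\ds_t$, the divisor $\divof(s)$ contains $k\zn{\ds}{t}$ as a fixed part; consequently $s$ factors through a section of $k\zp{\ds}{t}$ multiplied by the fixed divisor $k\zn{\ds}{t}$. After restricting to $C$ one obtains a section of $\zp{\ds}{t}|_C$ (a nef $\Q$-divisor on the curve $C$) multiplied by the restriction of the fixed locus. The fixed part contributes exactly $\ord_z(\zn{\ds}{t}|_C) = \alpha(t)$ to $\ord_z$ of the restricted section. Since $\zp{\ds}{t}|_C$ is nef of degree $\zp{\ds}{t}.C$, by asymptotic Riemann--Roch on the smooth (at $z$) curve $C$ its sections exhaust all orders of vanishing in $[0,\zp{\ds}{t}.C]$ at a general (in fact arbitrary rational) point as $k$ grows. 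This gives both the inclusion ``$\supseteq$'' of the claimed description and the upper bound $\beta(t) = \alpha(t) + \zp{\ds}{t}.C$. The reverse inclusion uses that $\zn{\ds}{t}$ is a genuine fixed part of every section, so no section can have $\ord_z$ on $C$ smaller than $\alpha(t)$.

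For the domain $[\nu,\mu]$: the definition of $\nu$ as the coefficient of $C$ in $\zn{\ds}{}$ exactly characterizes the smallest $t$ for which $\ds - tC$ still carries all sections of $k\ds$ (for $t < \nu$ the slice is empty because $C$ is contained in the stable base locus of $\ds$ with higher multiplicity), while $\mu$ is the pseudo-effective threshold $\mu_C(\ds)$ beyond which $\ds_t$ fails to be big. Continuity of $\alpha$ and $\beta$ follows from the continuity of the Zariski decomposition on the big cone (Bauer--K\"uronya--Szemberg, i.e.\ \cite{BKS}). Piecewise linearity, together with convexity of $\alpha$ and concavity of $\beta$, is the decisive geometric input: the big cone of $\var$ admits a locally finite decomposition into Zariski chambers on which both $\zp{(-)}{}$ and $\zn{(-)}{}$ depend linearly on the class, so along the ray $\ds - tC$ we cross finitely many chamber walls and on each subinterval $\alpha$ and $\beta$ are affine linear. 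Concavity of $\beta-\alpha = \zp{\ds}{t}.C$ follows from the concavity of the positive-part map restricted to a line; convexity of $\alpha$ is dual, coming from the convexity of $t \mapsto \zn{\ds}{t}$.

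The step I expect to be most delicate is the ``$\supseteq$'' direction, namely showing that every pair $(t,m)$ in the described region is realized as a valuation of some section after clearing denominators: one needs to promote sections on $C$ of $k \zp{\ds}{t}|_C$ (where nefness only gives asymptotic global generation after multiplication) to sections on $\var$ of $k\ds_t$ via an appropriate surjectivity/lifting statement for the restriction map $H^0(\var,k\ds_t) \to H^0(C,k\ds_t|_C)$ in the limit $k \to \infty$. This is exactly where the asymptotic vanishing of $H^1$ for the sequence $H^0(\var, k\ds_t - C) \to H^0(\var,k\ds_t) \to H^0(C,k\ds_t|_C)$, combined with the fixed-part structure coming from $\zn{\ds}{t}$, makes the argument work.
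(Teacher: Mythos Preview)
The paper does not give its own proof of this statement: Theorem~\ref{thm_function} is quoted as background from Lazarsfeld--Musta\c{t}\u{a} \cite[Theorem~6.4]{MR2571958} (together with the chamber input from \cite{BKS}) and is used without argument. There is therefore nothing in the paper to compare your proposal against.

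That said, your sketch is the standard argument and is essentially correct in outline. A few points to tighten. First, the claim that for $t<\nu$ the slice is empty is not quite right as written: the body does extend to $t=\nu$, and for smaller $t$ the issue is that there are no sections with $\ord_C$ equal to $kt$ (all sections vanish to order at least $k\nu$ along $C$); your parenthetical explanation conflates ``higher multiplicity'' with the wrong inequality. Second, the lifting step you flag as delicate is indeed the crux in \cite{MR2571958}: one does not get honest surjectivity of $H^0(\var,k\ds_t)\to H^0(C,k\ds_t|_C)$, but rather uses that the image asymptotically generates (via Fujita-type vanishing or the description of restricted volumes) so that every order of vanishing in $[0,\zp{\ds}{t}.C]$ is hit in the limit; you should be explicit that this is an asymptotic statement, not a statement for fixed $k$. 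Third, convexity of $\alpha$ and concavity of $\beta$ can be argued directly from convexity of the Newton--Okounkov body (the slice function of a convex body is concave on top and convex on the bottom), which is slightly cleaner than appealing to convexity of the negative-part map.
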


This result can be used to show \cite{KLM} that  Newton--Okounkov bodies of surfaces will always be  polygons in $\R^2$.

\subsection{Functions on Newton--Okounkov Bodies Coming from Geometric Valuations}

The construction of Newton--Okounkov functions in the sense of concave transforms of filtrations goes back   to Boucksom--Chen \cite{BC} and Witt--Nystr\"om \cite{MR3297156} who introduced them from different perspectives. We will focus on functions coming from geometric valuations as dealt with in \cite{KMSwB} and recall the definition restricted to that case.

Given an irreducible projective variety $\var$, an admissible flag $\flag$ and a big divisor $\ds$ let $\nob{\flag}{\ds}$ be the corresponding Newton--Okounkov body. Let now $\sv \subseteq \var$ be a smooth irreducible subvariety. Then we define a Newton--Okounkov function $\fct_\sv$ in a two-step process. A point $\op \in \nob{\flag}{\ds}$ is called a \emph{valuative} point, if 
\begin{equation*}
\op \in \text{Val}_{\flag} \coloneqq \bigcup_{k \geq 1}{ \frac{1}{k} \left\{ \val_{\flag} (s) \suchthat s \in \sps(\var,\she_\var(k\ds))\setminus \{0\} \right\} }.
\end{equation*}  

For a valuative point $\op \in \nob{\flag}{\ds}$ set 
\begin{eqnarray*}
\tilde{\fct}_\sv \colon \text{Val}_{\flag} &\to & \R \\ 
\op & \mapsto & \lim_{k \to \infty}{\frac{1}{k} \sup \{ t \in \R \suchthat \text{it exists } s \in \sps(\var,\she_\var(k\ds))  } \\
& & \text{ with } \val_{\flag}(s)=k\op, \ \ord_\sv(s) \geq t\}. \\
\end{eqnarray*}

Due to Lemma 2.6 in \cite{KMSwB} the set of valuative points $\text{Val}_{\flag}$ is dense in  $\nob{\flag}{\ds}$. For all non-valuative points $\op \in \nob{\flag}{\ds}\setminus \text{Val}_{\flag}$ set $\tilde{\fct}_\sv(\op) \coloneqq 0$. To define a meaningful function on the whole Newton--Okounkov body we use the concave envelope.   

\begin{definition}
Let $\Delta \subseteq \R^\dimm$ be a compact convex set and $f \colon \Delta \to \R$ a bounded real-valued function on $\Delta$. The \emph{closed convex envelope} $\env{f}$ of $f$ is defined as
\begin{equation*}
\env{f} \coloneqq \inf\{g(x) \suchthat g \geq f \text{ and } g \colon \Delta \to \R \text{ is concave and upper-semicontinuous}\}.
\end{equation*}
\end{definition}

\begin{definition}
Define the \emph{Newton--Okounkov function} $\fct_\sv$ coming from the geometric valuation associated to $\sv$ as
\begin{eqnarray*}
\fct_\sv \colon \nob{\flag}{\ds} & \to & \R \\
\op &\mapsto & \env{\tilde{\fct}_\sv}(\op).
\end{eqnarray*}
\end{definition}

Due to Lemma 4.4 in \cite{KMSwB} taking the concave envelope does not effect the values of the underlying function $\tilde{\fct}_\sv(\op)$ for valuative points $ \op \in  \text{Val}_{\flag}$.

Computing the actual values of a Newton--Okounkov function $\fct_\sv$ becomes extremely difficult and thus the functions are unknown even in some of the easiest cases. In general as far as the formal properties of $\fct_Z$ we will make use of the following know facts. 

\begin{itemize}
	\item $\fct_Z$ is non-negative and concave (\cite{MR3297156} or \cite{BC}, Lemma 1.6, 1.7).
	\item $\fct_Z$ depends only on the numerical equivalence class of $\ds$ (\cite{KMSwB}, Proposition 5.6).
	\item $\fct_Z$ is continuous if $\nob{\flag}{\ds}$ is a polytope (\cite{KMSwB}, Theorem 1.1).
	\item The numbers 
\begin{equation*}
\max_{\nob{\flag}{\ds}}{\fct_\sv} \quad \text{and} \quad \int_{\nob{\flag}{\ds}}{\fct_\sv} 
\end{equation*}
are independent of the choice of $\flag$ (\cite{DKMS2}, Theorem 2.4, \cite{BC}, Corollary 1.13). 
	 
\end{itemize}

\section[Zariski Decomposition]{Zariski Decomposition for Toric Varieties in Combinatorial Terms}\label{sec_nob_toric}

The focus of this section is the determination of Newton--Okounkov bodies in the toric case. For that we will first review the key-correspondence between the Newton--Okounkov body and the polytope associated to a torus-invariant divisor and give an interpretation in terms of Newton polytopes. Then we give a combinatorial way to find a torus-invariant representative for a class of certain divisors, see Proposition~\ref{div_koeff}. This leads to a combinatorial version of Zariski decomposition for the toric case, see Theorem~\ref{thm_toriczd}. Building on this, we illustrate a combinatorial way to define a piecewise linear isomorphism between the involved Newton Okounkov bodies, when changing to a non-invariant flag on a toric surface, see Subsection~\ref{sec_tilting}, in particular Corollary~\ref{cor_mv}.\\
\\
Given a smooth projective toric variety $\var$ of dimension $\dimm$, a torus-invariant flag $\flag$, and a big divisor $\ds$, then the construction of the Newton--Okounkov body $\nob{\flag}{\ds}$ recovers the polytope $\divp{\ds}$ by Proposition~6.1 in~\cite{MR2571958}. This can be seen as follows. \\
\\
Let $\ds_{\ray_1},\ldots,\ds_{\ray_d}$ denote the torus-invariant prime divisors. Since the flag $\flag$ is torus-invariant, we can assume an ordering of the divisors such that the subvarieties in the flag are given as $Y_i= \ds_{\ray_1} \cap \cdots \cap \ds_{\ray_i}$ for $1 \leq i \leq \dimm$. The divisor $\sum_{i=1}^{d} \ds_{\ray_i}$ has simple normal crossings, hence the orders of vanishing of a section $s \in \sps(\var,\she_\var(\ds))$ that has $\sum_{i=1}^{d}\dc_{\ray_i} \ds_{\ray_i}$ as its divisor of zeros can be directly read off as $\val_{\flag}(s)=(\dc_{\ray_1},\ldots,\dc_{\ray_\dimm})$. \\
\\
The underlying fan $\fan$ is smooth and thus the primitive ray generators $u_{\ray_1},\ldots, u_{\ray_\dimm} \in \lat$ span a maximal cone $\con$ and form a basis of the lattice $\lat$. This gives an isomorphism $\lat \cong \Z^{\dimm}$ and the dual isomorphism is given by
\begin{eqnarray}
\Phi \colon \latm & \to & \Z^\dimm \nonumber \\
m & \mapsto & (\langle m , u_{\ray_i} \rangle)_{1 \leq i \leq \dimm},
\end{eqnarray}    
which extends linearly to the map $\Phi_\R \colon \latm_\R \overset{\cong}{\longrightarrow} \R^\dimm$.\\
\\
The Newton--Okounkov body remains the same if one changes the divisor $\ds$ within its linear equivalence class. Hence we can assume $\ds|_{U_\con}=0$, i.e., if the divisor is given as $\ds= \sum_{\ray\in \fan(1)} \dc_\ray \ds_\ray$, then we have $\dc_\ray=0$ for all $\ray \in \con(1)$.\\
\\
The characters $\chi^m$ of points $m$ in $\divp{\ds}$ are exactly the characters of $\tor$ that extend to sections of $\she_\var(\ds)$ on $\var$ and according to~(\ref{enq_basis_sec}) the characters associated to the lattice points of $\divp{\ds}$ form a basis of the vector space of global sections. Given a lattice point $m \in \divp{\ds} \cap \latm$ its associated character $\chi^m$ has divisor of zeros $\ds+ \sum_{i=1}^{d} \langle m, u_{\ray_i} \rangle \ds_{\ray_i}$. Thus for $\ray \in \fan(1)$ the inequality $\langle m, u_\ray \rangle \geq -\dc_\ray$ reflects the condition that $\chi^m$ is regular at the generic point of the divisor $\ds_\ray$. 
Since we assumed $\ds|_{U_\con}=0$ this yields 
\begin{equation*}
\val_{\flag} (\chi^m)=(\langle m , u_{\ray_1} \rangle, \ldots, \langle m , u_{\ray_n} \rangle)=\Phi(m).
\end{equation*}
Thus 
\begin{equation}\conv \left( \left\{ \val_{\flag} (s) \suchthat s \in \sps(\var,\she_\var(\ds))\setminus \{0\} \right\}  \right)= \Phi(\divp{\ds} \cap \latm ).
\end{equation}
We have $\dim (\she_\var(\ds))= |\divp{\ds}\cap \latm|$. For all $k \geq 1$ it holds that $\divp{k \ds}=k \divp{\ds}$. This gives $\nob{\flag}{\ds}=\Phi_\R(\divp{\ds})$.\\
\\
We interpret this identification in terms of Newton polytopes. This approach will play a key role for `guessing' suitable global sections in Section~\ref{sec_function}. For convenience, we assume $\ds$ to be ample. Each divisor $\ds_\ray$ corresponds to a facet $F_\ray$ of $\divp{\ds}$ and all facets $F_{\ray_1},\ldots,F_{\ray_\dimm}$ intersect in a vertex $\vertex_\con$ that is associated to $\con$. Assuming $\ds|_{U_\con}=0$ on the polytope side means to embed the polytope $\divp{\ds}$ in $\R^\dimm$ such that the vertex $\vertex_\con$ is translated to the origin. \\
\\
Let $s \in \sps(\var,\she_\var(\ds))$ be a global section with Newton polytope $\np(s) \subseteq\divp{\ds}$. Then the order of vanishing of $s$ along $Y_1=\ds_{\ray_1}$ is given by the minimal lattice distance to $\face_{\ray_1}$, that is
\begin{equation}
\ord_{Y_1}(s)=\min_{\op \in \np(s)}\langle u_{\ray_1},\op \rangle.
\end{equation}
Let $F_1 \preceq \np(s)$ denote the face of the Newton polytope $\np(s)$ that has minimal lattice distance to $F_{\ray_1}$. Then $\ord_{Y_2}(s_1)=\min_{\op \in F_1 }\langle u_{\ray_2},\op \rangle$ and in general we have
\begin{equation}
\ord_{Y_{i+1}}(s_i)=\min_{\op \in F_i}\langle u_{\ray_{i+1}},\op \rangle
\end{equation}
for $1\leq i \leq \dimm-1$. Thus the map $\val$ sends the section $s$ to the point $\op \in \np(s)$ whose coordinates are lexicographically the smallest among all points of the Newton polytope. A similar argument applies to $k > 1$. Thus we obtain $\nob{\flag}{\ds} \subseteq \divp{\ds}$. Since in particular all the vertices $\vertex \preceq \divp{\ds}$ correspond to respective global sections extending characters $\chi^\vertex$, this yields $\divp{\ds} \subseteq  \nob{\flag}{\ds}$ and therefore $\divp{\ds} \cong  \nob{\flag}{\ds}$.   
For our convenience we identify $\divp{\ds}$ with its image under $\Phi_\R$.  \\
\\
As the Newton--Okounkov bodies  only depend on the numerical equivalence class of $\ds$, we can and often want to choose a torus-invariant representative. If $\ds$ is given by a defining local equation, then there is a combinatorial way to find one.

\begin{proposition}\label{div_koeff} 

Let $\var$ be a smooth projective toric variety with associated fan $\fan$, and $\ds$ a divisor on $\var$ that is given by the local equation $f$ in the torus for some $f \in \C(\var) \setminus \{0\}$. Then $\ds' \coloneqq \sum_{\ray\in \fan(1)}{- \dc_{\ray} \ds_{\ray}}$ with coefficients
\begin{equation}\label{eqn_coeff}
\dc_{\ray} \coloneqq  \min_{m \in \supp(f) }  \langle m,\rg \rangle 
\end{equation}
is a torus-invariant divisor that is linearly equivalent to $\ds$.
\end{proposition}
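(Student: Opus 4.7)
The divisor $\ds' = \sum_{\ray \in \fan(1)} -\dc_\ray \ds_\ray$ is manifestly torus-invariant, so the only content is to verify $\ds \sim \ds'$. The strategy is to exploit the principal divisor $\divof(f)$: since $\ds$ is cut out by $f$ on the open torus $\tor$, we have $\ds|_\tor = \divof(f|_\tor)$, and hence the difference $\ds - \divof(f)$ restricts to zero on $\tor$ and is therefore supported on the boundary $\var \setminus \tor = \bigcup_{\ray \in \fan(1)} \ds_\ray$. Under the natural convention that $\ds$ has no components along the boundary divisors, this gives
\[
\ds - \divof(f) \;=\; -\sum_{\ray \in \fan(1)} \ord_{\ds_\ray}(f)\cdot \ds_\ray,
\]
and since $\divof(f)$ is principal we obtain $\ds \sim -\sum_\ray \ord_{\ds_\ray}(f)\,\ds_\ray$. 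The proof therefore reduces to identifying $\ord_{\ds_\ray}(f) = \dc_\ray$ for every ray $\ray$.

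The key ingredient is the classical toric identity $\ord_{\ds_\ray}(\chi^m) = \langle m, \rg \rangle$ from \cite{cox2011toric}. Writing $f = \sum_{m \in \supp(f)} c_m \chi^m$ and fixing some $m_0 \in \supp(f)$ realising the minimum $\dc_\ray = \min_{m \in \supp(f)} \langle m, \rg \rangle$, I would factor out the dominant monomial to get $f = \chi^{m_0}\cdot g$, where
\[
g \;=\; \sum_{m \in \supp(f)} c_m\, \chi^{m - m_0}.
\]
Each summand $\chi^{m - m_0}$ has order $\langle m - m_0, \rg \rangle \geq 0$ along $\ds_\ray$, with equality precisely for those $m$ attaining the minimum, so it suffices to check $\ord_{\ds_\ray}(g) = 0$.

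The mild obstacle is ruling out cancellation when restricting $g$ to $\ds_\ray$. The prime $\ds_\ray$ is itself a toric variety whose character lattice is $\latm \cap \rg^\perp$, and the restriction reads
\[
g|_{\ds_\ray} \;=\; \sum_{m\, :\, \langle m, \rg \rangle = \dc_\ray} c_m\, \chi^{m - m_0}\big|_{\ds_\ray},
\]
a non-trivial linear combination of pairwise distinct characters of $\ds_\ray$. By linear independence of characters this restriction is non-zero, so $\ord_{\ds_\ray}(g) = 0$ and consequently $\ord_{\ds_\ray}(f) = \langle m_0, \rg \rangle = \dc_\ray$. Substituting back yields $\ds \sim -\sum_\ray \dc_\ray\, \ds_\ray = \ds'$, completing the argument.
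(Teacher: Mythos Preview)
Your argument is correct. Both proofs hinge on the same underlying computation---that $\ord_{\ds_\ray}(f) = \min_{m \in \supp(f)} \langle m, u_\ray \rangle$---but the paper reaches it via the Cox ring: it homogenises $f$ to an element $\tilde f \in (S_{x^{\hat\sigma_0}})_0$, factors $\tilde f = \big(\prod_\ray x_\ray^{a_\ray}\big)\cdot h$ with $h$ coprime to $\prod_\ray x_\ray$, and reads off $\ds \sim \ds'$ from $\divof(\tilde f) = 0$. Your route is more elementary and self-contained: you work directly in the local rings at the generic points of the boundary divisors, invoke only the standard identity $\ord_{\ds_\ray}(\chi^m) = \langle m, u_\ray \rangle$, and handle the one subtle point (no cancellation upon restriction) cleanly via linear independence of characters on the torus of $\ds_\ray$. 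The Cox ring packaging has the virtue of treating all rays at once in a single factorisation, whereas your approach makes the order computation at each $\ds_\ray$ completely transparent; both rely on the same implicit convention (made explicit in your write-up and in the paper's examples) that $\ds$ is the closure of $\{f=0\}\cap\tor$ and so carries no boundary components.
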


\begin{proof}
Consider the Cox ring $\cox= \C \left[x_{\ray} \suchthat \ray \in \fan(1) \right]$ which is graded by the class group $\text{Cl}(\var)$, see  Chapter~5 in~\cite{cox2011toric} for details. For a cone $\con \in \fan$ we denote by ${x^{\hat{\con}}= \prod_{\ray \notin \con(1)}x_\ray}$ the associated monomial in $\cox$ and by $\cox_{x^{\hat{\con}}}$ the localization of $\cox$ at $x^{\hat{\con}}$. Applying Lemma~2.2 in ~\cite{cox1995homogeneous} to the $\{0\}$-cone $\sigma_0 \in \fan$ gives an isomorphism of rings
\begin{equation*}
\C\left[M \right] = \C\left[\con^\vee_0 \cap M \right] \cong \left( \cox_{x^{\hat{\con}_0}} \right)_0,
\end{equation*}
where $x^{\hat{\con}_0}= \prod_{\ray }x_\ray$ and $\left( \cox_{x^{\hat{\con}_0}} \right)_0$ is the graded piece of degree $0$. \\
\\
Given a lattice point $m \in \latm$, the character $\chi^m$ is homogenized  to the monomial \linebreak $ x^{\langle m \rangle} = \prod_{\ray} x_\ray^{\langle m,\rg\rangle}$ by the corresponding map $\homm \colon \C\left[M \right] \to \left( \cox_{x^{\hat{\con}_0}} \right)_0 $. Thus homogenizing $f=\sum_{m \in \supp(f)} b_m\chi^m \in \C\left[ \latm \right]$ yields 
\begin{equation*}
\tilde{f} =\homm(f)= \homm\left(\sum_{m \in \supp(f)}{b_m\chi^m}\right)= \sum_{m \in \supp(f) } b_m \prod_{\ray} x_\ray^{\langle m,\rg\rangle}= \frac{g}{\left( \prod_{\ray}{x_{\ray}} \right)^{k}}
\end{equation*} 
for some homogeneous $g \in \cox$ and some $k \in \N$. We can rewrite this as 
\begin{equation}\label{eqn_ausklammern}
\tilde{f}=\frac{g}{\left( \prod_{\ray}{x_{\ray}} \right)^{k}}= \prod_{\ray}{ x_{\ray}}^{\dc_{\ray}} \cdot h
\end{equation} 
for $h \in \cox$ coprime with $ \prod_{\ray}{x_{\ray}}$ and uniquely determined $\dc_{\ray} \in \Z$.\\ 
\\
Since $\tilde{f}$ is homogeneous of degree $0$, it gives a rational function on $\var$ and we have \linebreak $ 0 \sim \divof(\tilde{f}) = \divof\left( \prod_{\ray}{ x_{\ray}}^{ \dc_{\ray}} \right) +\divof{(h)}$. On the torus the zero sets of $f$ and $h$ agree. Since $h$ is coprime with $x_{\ray}$ for all $\ray \in \fan(1)$, it has no zeros or poles along the boundary components. Altogether we have 
\begin{equation*}
\ds = \divof{(h)} \sim \divof\left( \prod_{\ray}{ x_{\ray}}^{- \dc_{\ray}} \right) =: \ds'.
\end{equation*}
Then $\ds'$ is torus-invariant by construction. It remains to show, that the coefficients $\dc_\ray$ from Equation~(\ref{eqn_ausklammern}) satisfy Equation~(\ref{eqn_coeff}). To see that, note, that the homogenization of $f$ consists of summands of the form $b_m \prod_{\ray} x_\ray^{\langle m,\rg\rangle}$, where we sum over $m \in \supp(f)$. But $h$ is an element of the Cox ring and it is supposed to be coprime with $ \prod_{\ray}{x_{\ray}}$. Therefore, to obtain the expression in Equation~(\ref{eqn_ausklammern}), we have to bracket the factor $x_\ray^j$ for $j$ maximal that is a common factor of all the summands for each $\ray \in \fan(1)$. The maximal $j$ is precisely
\begin{equation*}
\dc_{\ray}= \min_{m \in \supp(f) }  \langle m,\rg \rangle
\end{equation*}
as claimed.        
\end{proof}

We give an example to illustrate the proof of Proposition~\ref{div_koeff}.

\begin{example}\label{ex_div_coeff}
We consider the Hirzebruch surface $\var= \hir_1$ as in Example~\ref{ex_build_nob} and work with the divisor
\begin{equation*}
\ds=\overline{ \{ (x,y) \in \tor \suchthat f(x,y)=xy^{-2}-1=0 \}}.
\end{equation*}
Then the Cox ring is given by $\cox=\C\left[ x_{1},x_{2},x_{3},x_{4}\right]=\C \left[x,y,x^{-1},x^{-1}y^{-1}\right]$, where we write $x_i$ for $x_{\ray_i}$.  Homogenizing $f$ yields 
\begin{eqnarray*}
\homm(f)&=& \sum_{m \in \supp(f) } b_m \prod_{\ray} x_\ray^{\langle m,\rg\rangle}= x_{1}^{1}x_{2}^{-2}x_{3}^{-1}x_{4}^{1}-1 \\
	&=& \frac{g}{\left( \prod_{\ray}{x_{\ray}} \right)^{k}} = \frac{x_1^3x_3x_4^3-x_1^2x_2^2x_3^2x_4^2}{(x_{1}x_{2}x_{3}x_{4})^2} \\
	&=& \prod_{\ray}{ x_{\ray}}^{\dc_{\ray}} \cdot h = x_2^{-2}x_3^{-1} \cdot  (x_1x_4-x_2^2x_3)
\end{eqnarray*}
with exponents $a_{\ray_1}=a_{\ray_4}=0, \ a_{\ray_2}=-2$, and $a_{\ray_3}=-1$ and $h=x_1x_4-x_2^2x_3$ is coprime with $x_1x_2x_3x_4$. The same coefficients are obtained using Proposition~\ref{div_koeff}. 
\begin{eqnarray*}
a_{\ray_1}&=&\min(\langle (0,0),(1,0) \rangle,\langle (1,-2),(1,0) \rangle)=0, \\
a_{\ray_2}&=&\min(\langle (0,0),(0,1) \rangle,\langle (1,-2),(0,1) \rangle)=-2, \\
a_{\ray_3}&=&\min(\langle (0,0),(-1,0) \rangle,\langle (1,-2),(-1,0) \rangle)=-1, \\
a_{\ray_4}&=&\min(\langle (0,0),(-1,-1) \rangle,\langle (1,-2),(-1,-1) \rangle)=0. \\
\end{eqnarray*}
Thus $\ds' = \sum_{\ray \in \fan(1)}{- \dc_{\ray} \ds_{\ray}}=2\ds_2+\ds_3$ is a torus-invariant divisor which is linearly equivalent to $\ds$.
\end{example}


With Proposition~\ref{div_koeff} in hand, we can provide a  combinatorial proof for  the existence and uniqueness of Zariski decomposition for smooth toric surfaces independently of Theorem~\ref{thm_zd}. 

\begin{theorem}\label{thm_toriczd}
Let $\var$ be a smooth projective toric surface associated to the fan $\fan$ and let $\ds$ be a pseudo-effective torus-invariant $\R$-divisor on $\var$. Then there exists a unique effective $\R$-divisor 
\begin{equation*}
\zn{\ds}{} = \sum_{i=1}^{\ell}{c_i \zdn{i}}
\end{equation*} 
such that 
\begin{enumerate}
	\item $\zp{\ds}{} = \ds-\zn{\ds}{}$ is nef,
	\item $\zn{\ds}{}$ is either zero or its intersection matrix $(\zdn{i} . \zdn{j})_{i,j}$ is negative definite, and
	\item $\zp{\ds}{} . \zdn{i}=0$ for $i \in \{1,\ldots,\ell\}$.
\end{enumerate}
If $\ds$ is a $\Q$-divisor, then so are $\zp{\ds}{}$ and $\zn{\ds}{}$. 
\end{theorem}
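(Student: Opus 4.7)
The plan is to read off the Zariski decomposition directly from the polytope $\divp{\ds}$. Writing $\ds = \sum_{\ray\in\fan(1)} \dc_\ray\, \ds_\ray$, the polytope $\divp{\ds}$ is nonempty because $\ds$ is pseudo-effective. For every ray set
\[
\rat_\ray \;:=\; -\min_{m \in \divp{\ds}} \langle m, \rg\rangle,
\]
and define
\[
\zp{\ds}{} \;:=\; \sum_\ray \rat_\ray\, \ds_\ray, \qquad \zn{\ds}{} \;:=\; \ds - \zp{\ds}{} \;=\; \sum_\ray (\dc_\ray - \rat_\ray)\, \ds_\ray.
\]

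First, I would verify properties~(1) and~(3), then~(2), and finally uniqueness. Effectiveness of $\zn{\ds}{}$ is immediate from $\rat_\ray \leq \dc_\ray$: any $m \in \divp{\ds}$ satisfies $\langle m,\rg\rangle \geq -\dc_\ray$, and passing to the minimum gives $-\rat_\ray \geq -\dc_\ray$. For nefness of $\zp{\ds}{}$, one checks that $\divp{\zp{\ds}{}} = \divp{\ds}$ as a set (its tighter defining inequalities are precisely the attained versions), so the rays of the normal fan of $\divp{\zp{\ds}{}}$ all lie in $\fan(1)$; since $\fan$ is a complete $2$-dimensional fan whose ray set contains those of this normal fan, it refines it, and the standard combinatorial nefness criterion applies. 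For the orthogonality, the decisive dimension-$2$ observation is: if $c_\ray := \dc_\ray - \rat_\ray > 0$, then the face of $\divp{\zp{\ds}{}}$ on which $\langle \cdot, \rg\rangle$ attains its minimum cannot be an edge --- such an edge would have inward primitive normal a positive multiple of $\rg$, hence equal to $\rg$, forcing $c_\ray = 0$. So the minimum is attained at a single vertex, and the combinatorial intersection formula $\zp{\ds}{}\cdot\ds_\ray = \llen_\latm(\text{edge of } \divp{\zp{\ds}{}} \text{ perpendicular to } \rg)$ yields $\zp{\ds}{}\cdot\ds_\ray = 0$.

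For~(2), Hodge index handles the big case cleanly: there $(\zp{\ds}{})^2 > 0$, and by~(3) the components of $\zn{\ds}{}$ lie in $(\zp{\ds}{})^\perp \subseteq N^1(\var)_\R$, on which the intersection form is negative definite (linear independence of the classes reduces to a short check using the $\latm$-relations among torus-invariant divisors). For uniqueness, I would characterize $\zp{\ds}{}$ as the maximal nef divisor dominated by $\ds$: for any nef $P' \leq \ds$, the polytope inclusion $\divp{P'} \subseteq \divp{\ds} = \divp{\zp{\ds}{}}$ passes to minima to give $\dc^{P'}_\ray \leq \rat_\ray$, hence $P' \leq \zp{\ds}{}$. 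Combined with the negative-definiteness of the components of $\zn{\ds}{}$, the standard observation that a nonnegative vector $\vec{e}$ with $A\vec{e} \geq 0$ (componentwise) for negative-definite $A$ must vanish then forces any other decomposition satisfying~(1)--(3) to coincide with $(\zp{\ds}{}, \zn{\ds}{})$. Rationality is automatic, since a rational polytope has rational vertices, so each $\rat_\ray \in \Q$ when $\ds$ is a $\Q$-divisor.

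The main obstacle is the negative-definiteness step~(2) in the non-big case, where $\divp{\ds}$ is lower-dimensional, $(\zp{\ds}{})^2 = 0$, and Hodge index only produces negative \emph{semi}-definiteness on $(\zp{\ds}{})^\perp$. In this regime the rays with $c_\ray > 0$ organize into combinatorial strings between consecutive rays of the (possibly degenerate) normal fan of $\divp{\ds}$, and one has to verify negative-definiteness of the resulting tridiagonal intersection matrices on each string directly --- for instance by analyzing the toric contraction morphism from $\var$ onto the toric variety associated to the normal fan of $\divp{\ds}$.
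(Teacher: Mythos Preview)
Your construction of $\zp{\ds}{}$ and $\zn{\ds}{}$ via tightening the inequalities of $\divp{\ds}$, as well as your treatment of effectiveness, nefness~(1), orthogonality~(3), and uniqueness, coincide with the paper's. The divergence is in part~(2).

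The paper does not invoke Hodge index. Instead it proves a combinatorial lemma: if $\ds_1,\ldots,\ds_k$ are torus-invariant prime divisors whose consecutive ray generators $u_1,\ldots,u_k$ lie in a pointed cone $\cone(u_0,u_{k+1})$, then the tridiagonal matrix $(-\ds_i\cdot\ds_j)_{1\leq i,j\leq k}$ has determinant $\det(u_0,u_{k+1})$. This is shown by induction, using the continuant recursion for tridiagonal determinants together with the smoothness relation $u_{i-1}+u_{i+1}=\lambda_i u_i$. Positivity of all leading principal minors then follows because each string of rays appearing in $\zn{\ds}{}$ sits inside a strictly convex cone --- otherwise $\divp{\ds}$ would be unbounded. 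This argument is uniform in the big and non-big cases and keeps the proof purely combinatorial, independent of the general Zariski/Hodge machinery, which is the stated purpose of the section.

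Your Hodge-index route is correct in the big case, but as you yourself flag, it degenerates when $\divp{\ds}$ is lower-dimensional and $(\zp{\ds}{})^2=0$. Your proposed fix --- pass to the contraction onto the toric variety of the normal fan of $\divp{\ds}$ --- is not carried out, and if it were, it would essentially reconstruct the same tridiagonal determinant computation on each string. So the gap you identify is real for your argument, and the paper's continuant lemma is precisely the missing ingredient.
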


For the proof  we will need the following Lemma.

\begin{lemma}\label{lem_intnum}
Let $\var$ be the toric surface associated to the fan $\fan$. Let $\ds_0,\ldots,\ds_{k+1}$ be torus-invariant prime divisors with adjacent associated primitive ray generators $u_0,\ldots,u_{k+1} \in \R^2$ such that $\cone(u_{0},u_{k+1})$ is pointed and $u_1,\ldots,u_{k} \in \cone(u_0,u_{k+1})$. Then 
\begin{equation}\label{eqn_intnum}
\det((-\ds_i.\ds_j)_{1 \leq i,j \leq k}) = \det(u_0,u_{k+1}).
\end{equation} 
\end{lemma}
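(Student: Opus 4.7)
The plan is to compute both sides of \eqref{eqn_intnum} via the same three-term linear recursion and match them.

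First I would unpack the intersection numbers. Because $\var$ is smooth and the primitive generators are listed in angular order inside the pointed cone $\cone(u_0,u_{k+1})$, any two consecutive generators $u_{i-1},u_i$ form a $\Z$-basis of $\lat$. This gives the standard intersection table for adjacent torus-invariant prime divisors on a smooth toric surface: $\ds_i\cdot\ds_{i+1}=1$ (they meet in a single torus-fixed point), $\ds_i\cdot\ds_j=0$ whenever $|i-j|\ge 2$, and $\ds_i^2=-a_i$, where $a_i\in\Z$ is the unique integer determined by the smoothness relation
\begin{equation*}
u_{i-1}+u_{i+1}=a_i u_i.
\end{equation*}
Consequently the matrix $(-\ds_i\cdot\ds_j)_{1\le i,j\le k}$ is tridiagonal with diagonal $(a_1,\ldots,a_k)$ and $-1$ on both off-diagonals.

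Next I would extract a recursion for $\delta_k:=\det((-\ds_i\cdot\ds_j)_{1\le i,j\le k})$ by Laplace expansion along the last row, obtaining
\begin{equation*}
\delta_k \;=\; a_k\,\delta_{k-1}-\delta_{k-2},
\qquad \delta_0=1,\ \ \delta_1=a_1.
\end{equation*}
In parallel, I would rewrite the smoothness relation as $u_{j+1}=a_j u_j-u_{j-1}$ and set $d_j:=\det(u_0,u_j)$. By multilinearity of the determinant, the same recursion transfers to the $d_j$:
\begin{equation*}
d_{j+1} \;=\; a_j\,d_j - d_{j-1},
\qquad d_0=0,\ \ d_1=\det(u_0,u_1)=1,
\end{equation*}
where $d_1=1$ again uses that $u_0,u_1$ is a lattice basis (smoothness).

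Finally I would match the two sequences. A straightforward induction on $j$, using that both $\delta_j$ and $d_{j+1}$ satisfy the same recursion with matching initial data $\delta_0=1=d_1$ and $\delta_1=a_1=d_2$, gives $\delta_j=d_{j+1}$ for all $j\ge 0$. Specializing at $j=k$ yields
\begin{equation*}
\det\bigl((-\ds_i.\ds_j)_{1\le i,j\le k}\bigr)=\delta_k=d_{k+1}=\det(u_0,u_{k+1}),
\end{equation*}
as claimed. The only subtle point is step one: one must be careful that the angular ordering of the $u_i$ inside the pointed cone is what guarantees that each consecutive pair $u_{i-1},u_i$ actually spans a cone of $\Sigma$, so that the smoothness relation and the off-diagonal intersection values $\ds_i\cdot\ds_{i+1}=1$ both apply; the rest is bookkeeping.
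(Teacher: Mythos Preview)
Your proof is correct and follows essentially the same approach as the paper: both identify the intersection matrix as tridiagonal with diagonal entries $a_i$ (the paper writes $\lambda_i$) coming from the smoothness relation $u_{i-1}+u_{i+1}=a_iu_i$, invoke the three-term recursion for tridiagonal determinants, and match it against the same recursion for $\det(u_0,u_{j})$ via induction. Your presentation is slightly more explicit in setting up the two parallel sequences $\delta_j$ and $d_j$ before identifying them, whereas the paper runs the induction directly on the equality, but the substance is identical.
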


\begin{proof}
The intersection numbers of the torus-invariant prime divisors ${\ds_1,\ldots,\ds_k}$ are given as
\begin{itemize}
\item  $\ds_i.\ds_i=-\lambda_i$, where $u_{i-1}+u_{i+1}=\lambda_i u_i$ \\
\item and for $i \neq j$ as
	\begin{equation*}
	\ds_{i}.\ds_{j}=
	\begin{cases} 1 & \text{if }\ray_i \text{ and } \ray_j \text{ are adjacent}\\ 
		0 & \text{otherwise.}
	\end{cases}
	\end{equation*}
\end{itemize}

Thus the intersection matrix is of the form
\begin{equation}\label{eqn_blockmatrix}
 A_k \coloneqq (-\ds_i.\ds_j)_{1 \leq i,j \leq k}=
\begin{pmatrix}
 \lambda_1 & -1 & 0 & \cdots & \cdots & 0 \\
 -1 & \lambda_2 & -1 & 0 & \cdots & 0 \\
 0 & \ddots & \ddots & \ddots & \cdots & 0 \\
  0 & \cdots & \ddots & \ddots & \ddots & 0 \\
0& \cdots & 0 & -1 &\lambda_{k-1 }& -1 \\
0 & \cdots & \cdots & 0 & -1 & \lambda_k \\
\end{pmatrix}.
\end{equation}


We will prove by induction on $k$ that~(\ref{eqn_intnum}) holds.

\textbf{Base case:} For $k=1$ we have 
\begin{eqnarray*}
\det(u_0,u_2) & =& u_0^{(1)}u_2^{(2)}-u_0^{(2)}u_2^{(1)} \\
 &=& \lambda_1 \left( u_0^{(1)} u_1^{(2)}- u_0^{(2)} u_1^{(1)} \right) \\
 &=& \lambda_1,
\end{eqnarray*}
since $\fan$ is smooth. A similar computation applies to $k=2$.

\textbf{Induction step:}
Let $k \geq 3$ be given and suppose~(\ref{eqn_intnum}) is true for all integers smaller than $k$. Note that the determinant of the tridiagonal matrix $A_k$ fulfills a particular recurrence relation, since it is an extended continuant. The recurrence relation is given by
\begin{equation*}
\det(A_0)= 0\ ,\  \det(A_1) = 1\ ,\ \text{and}\ \det(A_k) =\lambda_k\det(A_{k-1})-\det(A_{k-2}). 
\end{equation*}
Thus we have 
\begin{eqnarray*}
\det(A_k)&=&\lambda_k\det(A_{k-1})-\det(A_{k-2}) \\
&\overset{\text{IH}}{=}& \lambda_k\det(u_0,u_k)-\det(u_0,u_{k-1}) \\
&=& \det(u_0,\lambda_ku_k-u_{k-1})\\
&=& \det(u_0,u_{k+1})
\end{eqnarray*}
as claimed.
\end{proof}

\begin{proof}[Proof of Theorem~\ref{thm_toriczd}.]
Since $\ds$ is torus-invariant, it is given as ${\ds= \sum_{\ray\in \fan(1)}a_\ray \ds_\ray}$. We can assume, that $\ds$ is effective, i.e., $a_\ray \geq 0$ for all $\ray \in \fan(1)$. This defines the polygon
\begin{equation*}
\divp{\ds} =\left\{ m \in \latm_\R \suchthat \langle m, \rg \rangle \geq -a_\ray \text{ for all } \ray \in \fan (1) \right\}.
\end{equation*} 
Let $\tilde{a}_\ray \in \R$ be the coefficients such that 
\begin{equation*}
\divp{\ds} =\left\{ m \in \latm_\R \suchthat \langle m, \rg \rangle \geq -\tilde{a}_\ray \text{ for all } \ray \in \fan (1) \right\}
\end{equation*} 
and all the inequalities are tight on $\divp{\ds}$, i.e., for every $\ray \in \fan(1)$ there exists some point $m \in \divp{\ds}$ such that $\langle m, \rg \rangle =  -\tilde{a}_\ray $. \\
\\
Set $\zp{\ds}{} \coloneqq \sum_{\ray\in \fan(1)} \tilde{a}_\ray \ds_\ray$ and $\zn{\ds}{} \coloneqq \sum_{\ray\in \fan(1)} (a_\ray- \tilde{a}_\ray) \ds_\ray$. Then
\begin{equation*}
\ds= \sum_{\ray\in \fan(1)}a_\ray \ds_\ray = \zp{\ds}{} +\zn{\ds}{}
\end{equation*} 
and $(a_\ray-\tilde{a}_\ray) \geq 0$ by definition.  We now show that the divisors satisfy 1.-3.
\begin{enumerate}
\item Since $\var$ is a surface, the divisor $\zp{\ds}{}$ is nef by construction. 

\item Let $\ds_\ray$ be a curve with $\ds_\ray.\ds_\ray \geq 0$ for some $\ray \in \fan(1)$. There exists a vector $\dir \in M$ such that $v$ is orthogonal to $u_{\ray'}$ and $\langle v,u_\ray \rangle < 0$, where $\ray'$ is a ray adjacent to $\ray$. Then the inequality  corresponding to $\ray$ is tight on $\divp{\ds}$, i.e., $a_\ray = \tilde{a}_\ray$, because otherwise the polytope $\divp{\ds}$ would be unbounded in the direction of $v$. Thus only negative curves will appear in $\zn{\ds}{}$.\\
\\
The matrix $(\zdn{i} . \zdn{j})_{i,j}$ is negative definite if all leading principal minors of ${(-\zdn{i} . \zdn{j})_{i,j}}$ are positive. Label the negative curves that appear in the negative part $\zn{\ds}{}$ as $\left\{N_1,\ldots,N_\ell \right\}$ in such a way that adjacent rays are given consecutive indices counter-clockwise. Then the intersection matrix $(\zdn{i} . \zdn{j})_{i,j}$ is a block matrix, where each block is of the form~(\ref{eqn_blockmatrix}) as in Lemma~\ref{lem_intnum}. Let ${\left\{N_1,\ldots,N_k \right\} \subseteq \left\{N_1,\ldots,N_\ell \right\}}$ be adjacent negative curves that form a sub block $(-\zdn{i} . \zdn{j})_{1 \leq i,j \leq k}$ of the matrix $(-\zdn{i} . \zdn{j})_{1 \leq i,j \leq \ell}$ and denote by $\cur_0$ and $\cur_{k+1}$ the remaining curves whose rays are adjacent to $\ray_1$ and $\ray_k$ as indicated in Figure~\ref{fig_negcurves}.

\begin{figure}[h]
\begin{center}
    \includegraphics[scale=0.17]{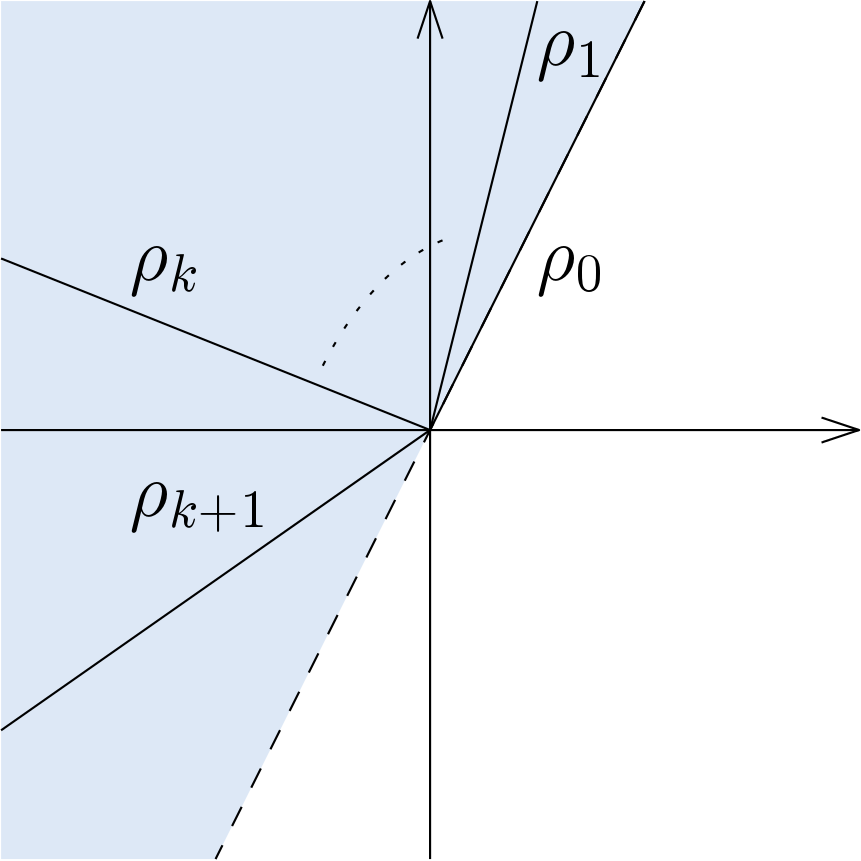}
\caption{Adjacent rays $\ray_0,\ldots,\ray_{k+1}$ of the prime divisors $\cur_0,\ldots,\cur_{k+1}$. }\label{fig_negcurves}
\end{center}
\end{figure}

For the ray generators we have ${u_1,\ldots,u_k \in \cone(u_0,u_{k+1})}$ and that $\cone(u_0,u_{k+1})$ is convex, for otherwise the polytope $\divp{\ds}$ would be unbounded in the direction of $v'$, where $v' \in M$ is chosen to be orthogonal to $u_0$ and fulfill ${\langle v',u_{k+1} \rangle >0}$. \\
\\
Thus it remains to show that the determinant of each such sub block matrix is positive. According to Lemma~\ref{lem_intnum} we have  
\begin{equation*}
\det(-N_i.N_j)_{1 \leq i,j \leq k} = \det(u_0,u_{k+1}).
\end{equation*}
Let $u_0=(m_1,m_2)$ and $u_{k+1}=(m_1',m_2')$, and assume without loss of generality that $m_1>0$. Since $\cone(u_0,u_{k+1})$ is convex and $u_1,\ldots,u_k \in \cone(u_0,u_{k+1})$, we have $m_2' > \frac{m_2}{m_1}m_1'$, because otherwise the polytope $\divp{\ds}$ would be unbounded. It follows that 
\begin{equation*}
\det(u_0,u_{k+1})= \det \begin{pmatrix}
m_1 &m_1' \\ m_2 &m_2' 
\end{pmatrix}
= m_1m_2'-m_1'm_2 > 0.
\end{equation*}

A similar argument works for $m_1 \leq 0$. Thus altogether, we have that $(\zdn{i} . \zdn{j})_{i,j}$ is negative definite, since all sub block matrices of $(-\zdn{i} . \zdn{j})_{i,j}$ have a positive determinant. 
\item Let $\ray \in \fan(1)$ be a ray for which $\ds_\ray$ appears in the negative part $\zn{\ds}{}$ of the decomposition. Then by construction of $\zp{\ds}{}$ its corresponding face $\face_\ray \preceq \divp{\zp{\ds}{}}$ is a vertex. It follows that
\begin{equation*}
\zp{\ds}{}.\ds_\ray=|\face_\ray \cap \latm|-1=0,
\end{equation*}
when $\divp{\ds}$ is a lattice polytope. A similar argument works in the non-integral case using $\llen_\latm(\face_\ray)$. 

\end{enumerate}
The above gives the existence of a Zariski decomposition. It remains to show uniqueness of $\zn{\ds}{}$. Assume we have a decomposition
\begin{equation*}
\ds=\zp{\overline{\ds}}{}+\zn{\overline{\ds}}{}=\sum_{\ray\in \fan(1)}{\overline{a}_\ray\ds_\ray}+\sum_{\ray\in \fan(1)}{(a_\ray-\overline{a}_\ray)\ds_\ray}.
\end{equation*} 
Since $\zn{\overline{\ds}}{}$ is supposed to be effective and $\zp{\overline{\ds}}{}$ is supposed to be nef which translates into only tight inequalities for $\divp{\zp{\overline{\ds}}{}}$, we have $\overline{a}_\ray \leq \tilde{a}_\ray$ for all $\ray \in \fan(1)$. Let $\ray \in \fan(1)$ be the ray of a divisor $\ds_\ray$ that appears in the negative part $\zn{\overline{\ds}}{}$. Then as argued before this has to be a negative curve. But due to 3. the corresponding face $\face_\ray$ of $\divp{\zp{\overline{\ds}}{}}$ has to be a vertex and therefore it follows that ${\overline{a}_\ray=\tilde{a}_\ray}$. This yields uniqueness of  $\zn{\overline{\ds}}{}$. 
\end{proof}

\subsection{The `Tilting-Isomorphism' for Newton-Okounkov Bodies}\label{sec_tilting}

Although, we can always assume the divisor $\ds$ to be torus-invariant, the shape of the Newton--Okounkov body $\nob{\flag}{\ds}$ will heavily depend on the flag $\flag$ which on the other hand is not necessarily torus-invariant. If the curve $Y_1$ in the flag is determined by an equation of the form $x^v-1=0$ for some primitive $v\in \Z^2$, then we can give a  combinatorial way to compute $\nob{\flag}{\ds}$.

\begin{proposition}\label{prop_mv}
Let $\var$ be a smooth projective toric surface, $\ds$ a big divisor, and \linebreak ${\flag \colon \var \supseteq \cur \supseteq \{\point \}}$ an admissible flag on $\var$, where the curve is ${\cur=\overline{\{x \in \tor \suchthat x^v=1\}}}$ for some primitive $\dir \in \Z^2$ and $\point$ a general smooth point on $\cur$. 
Then the associated function $\beta(t)$ in Theorem~\ref{thm_function} is given by
\begin{eqnarray}
\beta( t) &=& \zp{(\ds-t \cur)}{}.\cur \label{eqn_beta1} \\ 
	&=& \zp{(\ds-t \cur')}{}.\cur \label{eqn_beta2} \\ 
	&=& \mv \left(  \divp{\zp{(\ds-t\cur')}{} }, \np(x^v-1) \right) \label{eqn_beta3} \\
	&=&  \mv \left( \divp{\ds }  \cap  (\divp{\ds} +tv ), \np(x^v-1) \right)  \label{eqn_beta4}
\end{eqnarray}
for $0 \leq t \leq \mu $, where $\cur'$ is a torus-invariant curve that is linearly equivalent to $C$. 
\end{proposition}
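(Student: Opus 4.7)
The plan is to establish the four displayed equalities in sequence. For~(\ref{eqn_beta1}), apply Theorem~\ref{thm_function}(3) to write $\beta(t) = \alpha(t) + \zp{\ds}{t} \cdot \cur$ and reduce to verifying $\alpha(t) \equiv 0$ for a general $z \in \cur$. Since $\ds - t\cur \equiv \ds - t\cur'$ and the latter is torus-invariant, Theorems~\ref{thm_zd} and~\ref{thm_toriczd} together imply that the negative part $\zn{\ds}{t}$, viewed as a cycle, is a non-negative combination of torus-invariant prime divisors $\ds_\ray$. Its support thus lies in the toric boundary $\bigcup_\ray \ds_\ray$, and $\supp(\zn{\ds}{t}|_\cur)$ is contained in the fixed finite set $\cur \cap \bigcup_\ray \ds_\ray$, independent of $t$. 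A general $z \in \cur$ avoids this set, so $\alpha \equiv 0$. Equality~(\ref{eqn_beta2}) is then immediate from $\cur \equiv \cur'$ and the fact that intersection numbers depend only on numerical equivalence classes.

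For~(\ref{eqn_beta3}), rewrite the left side as $\zp{(\ds-t\cur')}{} \cdot \cur'$ using $\cur \equiv \cur'$. The positive part $\zp{(\ds-t\cur')}{}$ is a torus-invariant nef $\R$-divisor by Theorem~\ref{thm_toriczd}, with associated polytope $\divp{\zp{(\ds-t\cur')}{}}$. A standard Bernstein--Kushnirenko argument then equates this intersection number with $\mv(\divp{\zp{(\ds-t\cur')}{}}, \np(x^v - 1))$: a generic global section of $\she_\var(\zp{(\ds-t\cur')}{})$ has Newton polytope $\divp{\zp{(\ds-t\cur')}{}}$, and Bernstein--Kushnirenko counts its intersection with $\{x^v - 1 = 0\}$ inside the torus, while nefness guarantees no boundary contributions.

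For~(\ref{eqn_beta4}), first note that $\divp{\zp{(\ds-t\cur')}{}} = \divp{\ds-t\cur'}$ as sets, since passage to the positive part only tightens redundant inequalities (cf.\ the proof of Theorem~\ref{thm_toriczd}). Choosing $\cur'$ via Proposition~\ref{div_koeff} applied to $f = x^v - 1$ yields coefficient $\max(0, -\langle v, \rg \rangle)$ at each $\ds_\ray$. A direct comparison of supporting half-spaces then shows that the translate $\divp{\ds - t\cur'} + tv$ coincides exactly with $\divp{\ds} \cap (\divp{\ds} + tv)$. The equality of mixed volumes follows from translation-invariance.

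The principal subtlety lies in step~(\ref{eqn_beta1}): I need a single $z \in \cur$ general enough to annihilate $\alpha(t)$ for every $t \in [0, \mu]$ simultaneously. This is resolved by the observation above — although $\zn{\ds}{t}$ itself varies with $t$, its support on $\cur$ always lies within the fixed finite set $\cur \cap \bigcup_\ray \ds_\ray$, so genericity at a single moment suffices.
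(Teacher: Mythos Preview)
Your proposal is correct and follows essentially the same route as the paper: reduce to a torus-invariant representative, invoke uniqueness of Zariski decomposition to pass from $C$ to $C'$, identify the intersection number as a mixed volume, and verify $\divp{\zp{(\ds-t\cur')}{}}+tv = \divp{\ds}\cap(\divp{\ds}+tv)$ by comparing half-spaces. You are in fact more careful than the paper about why $\alpha(t)\equiv 0$ and $\nu=0$ (the paper simply asserts $\beta(t)=\zp{(\ds-t\cur)}{}.\cur$ from Theorem~\ref{thm_function} without isolating this point), and your observation that $\supp(\zn{\ds}{t}|_\cur)\subseteq\cur\cap\bigcup_\ray\ds_\ray$ is a fixed finite set independent of $t$ is exactly the right justification. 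One small caveat: in step~(\ref{eqn_beta3}) your phrasing ``a generic global section of $\she_\var(\zp{(\ds-t\cur')}{})$'' is not literally meaningful when the positive part is only an $\R$-divisor; the paper sidesteps this by citing the mixed-volume formula for toric intersection numbers directly, and you could do the same or argue by continuity from the rational case.
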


\begin{proof}
Since the Newton--Okounkov body only depends on the numerical equivalence class, we may assume that the divisor $\ds$ is torus-invariant, i.e., ${\ds= \sum_{\ray \in \fan(1)} \dc_\ray \ds_\ray}$, where $\fan$ is the fan associated to $\var$.\\ 
\\
From Theorem~\ref{thm_function} we know that ${\beta(t) = \zp{(\ds-t \cur)}{}.\cur}$
for ${\nu \leq t \leq \mu}$, where $\zp{(\ds-t \cur)}{}$ is the positive part of the Zariski decomposition of ${\ds-t \cur}$ and for $\cur$ not part of $\zn{(\ds-t \cur)}{}$ we have ${\nu=0}$. Theorem~\ref{thm_zd} states that the decomposition is unique up to the numerical equivalence class of the given divisor. Let 
\begin{equation*}
\cur'= \sum_{\ray \in \fan(1)}{- \min_{m \in \supp(x^v-1) }{  \langle m,\rg \rangle } \ds_\ray}
\end{equation*}
be the torus-invariant curve given in Proposition~\ref{div_koeff}. This means ${\cur' \sim \cur}$ and the curves are in particular numerically equivalent which yields~(\ref{eqn_beta2}).
Due to Sections~5.4/5.5 in~\cite{fulton1993introduction} the intersection product of two curves equals the mixed volume of the associated Newton polytopes and therefore we have~(\ref{eqn_beta3}).\\
\\
To verify the remaining equality, we show that 
\begin{equation*}
\divp{\zp{(\ds-t\cur')}{} } = \divp{\ds }  \cap  (\divp{\ds} +tv )
\end{equation*}
holds  up to translation. For the torus-invariant curve $\ds- t \cur'$ the construction of its Zariski decomposition as in Theorem~\ref{thm_toriczd} guarantees the equality
\begin{equation*}
\divp{ \zp{(\ds - t \cur')}{}} = \divp{(\ds - t \cur')}
\end{equation*}
for the corresponding polytopes. Consider its translation by $tv$, this gives
\begin{align*}
&\divp{(\ds - t \cur')} + tv  \\
&= \left\{ m +tv \in \latm_\R  \suchthat \langle m, \rg \rangle  \geq - \left(\dc_\ray + t\cdot \min{( 0, \langle v,\rg \rangle )  }\right) \text{ for all } \ray \in \fan (1) \right\} \\
	&=  \left\{ m \in \latm_\R  \suchthat \langle m -tv, \rg \rangle  \geq - \left(\dc_\ray + t\cdot \min{( 0, \langle v,\rg \rangle )  }\right) \text{ for all } \ray \in \fan (1)  \right\} \\ 
	&= \left\{ m \in \latm_\R  \suchthat \langle m, \rg \rangle  \geq - \dc_\ray - t \cdot \min{( 0, \langle v,\rg \rangle )  } +t \langle v,\rg \rangle  \text{ for all } \ray \in \fan (1) \right\} \\
&= \left\{ m \in \latm_\R  \suchthat \langle m, \rg \rangle  \geq - \dc_\ray + \max{( 0, t \langle v,\rg \rangle ) } \text{ for all } \ray \in \fan (1) \right\}. \\ \ 
\end{align*}
On the other hand, we have
\begin{equation*}
\divp{\ds} = \left\{ m  \in \latm_\R  \suchthat \langle m, \rg \rangle  \geq - \dc_\ray \text{ for all } \ray \in \fan (1)  \right\} 
\end{equation*}
and 
\begin{equation*}
\divp{\ds} + tv  = \left\{ m  \in \latm_\R  \suchthat \langle m, \rg \rangle  \geq - \dc_\ray +t \langle v,\rg \rangle \text{ for all } \ray \in \fan (1)  \right\}. 
\end{equation*}
Thus their intersection is the set
\begin{align*}
& \divp{\ds} \cap \left( \divp{\ds} + tv \right) \\ 
&=\left\{ m  \in \latm_\R  \suchthat \langle m, \rg \rangle  \geq - \dc_\ray \text{ and } \langle m, \rg \rangle  \geq - \dc_\ray +t \langle v,\rg \rangle \text{ for all } \ray \in \fan (1) \right\} \\
	&= \left\{ m \in \latm_\R  \suchthat \langle m, \rg \rangle  \geq - \dc_\ray + \max{( 0, t \langle v,\rg \rangle )  } \text{ for all } \ray \in \fan (1) \right\}.
\end{align*}
This verifies equality in~(\ref{eqn_beta4}). 
\end{proof}

\begin{example}\label{ex_hir1}
We return to the Hirzebruch surface $\var = \hir_1$ from Example~\ref{ex_build_nob}, and consider the big divisor  ${\ds= \ds_3 +2\ds_4}$ on $\var$. Then for any admissible  torus-invariant flag $\flag'$ the associated Newton--Okounkov body $\nob{\flag'}{\ds}$ coincides with a translate of the polytope $\divp{\ds}$ which can be seen in Figure~\ref{fig_verschieben}.\\
\\
We wish to determine the Newton--Okounkov body $\nob{\flag}{\ds}$ given by a different flag \linebreak ${\flag \colon \var \supseteq \cur \supseteq \{\point\}}$, where ${\cur = \overline{\{(x,y) \in \tor \suchthat y^{-1}-1=0\}}}$ is a non-invariant curve, and $\point$ is a general smooth point on $C$. In local coordinates the curve $\cur$ is given by the binomial ${y^{-1}-1}$ for ${v=(0,-1)}$ and has the line segment ${\np{(\cur)}= \conv((0,0),(0,-1))}$ as its Newton polytope. Using Proposition~\ref{div_koeff} we obtain the torus-invariant curve ${\cur'=\ds_2}$ which is linearly equivalent to $\cur$. \\
\\
To determine the Newton--Okounkov body we use variation of Zariski decomposition for the  divisor ${\ds_t=\ds-t \cur}$. To compute the upper part of the Newton--Okounkov body in terms of the piecewise linear function $\beta$, we move a copy of the polytope $\divp{\ds}$ in the direction of $v$ as indicated in Figure~\ref{fig_verschieben}.

\begin{figure}[h!]
\begin{center}
    \includegraphics[scale=0.11]{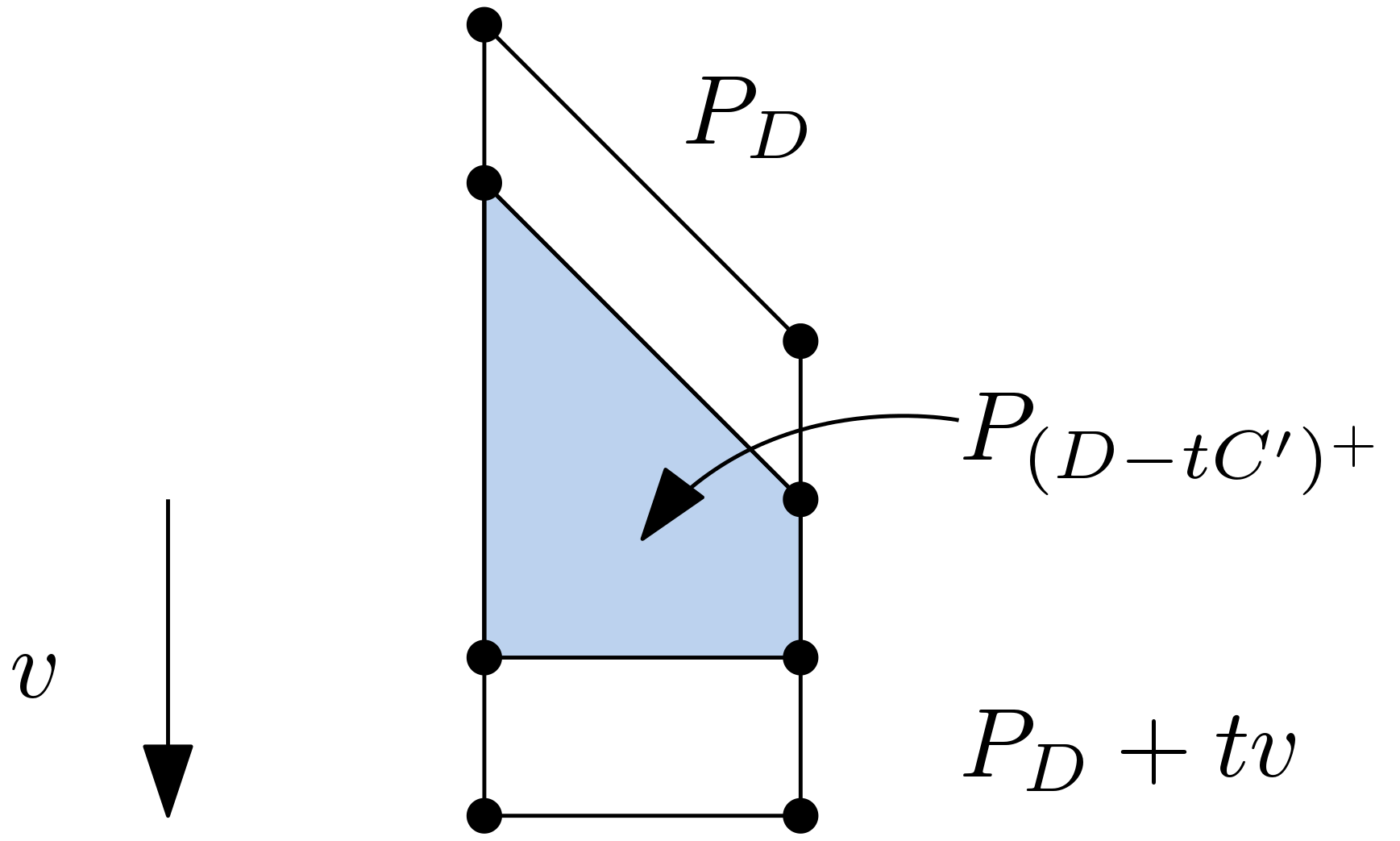}
\caption{Moving a copy of $\divp{\ds}$ in the direction of $v$ to obtain $\divp{\ds} \cap (\divp{\ds}+tv)=\divp{\zp{(\ds-t\cur')}{}}$.} \label{fig_verschieben}
\end{center}
\end{figure}

The intersection $\divp{\ds} \cap (\divp{\ds}+tv)$ gives the polytope associated to $\divp{\zp{(\ds-t\cur')}{}}$. By Proposition~\ref{prop_mv} the function $\beta$ is then given as  

\begin{eqnarray*}
\beta(t) &=& \zp{\ds}{t}. \cur = \mv \left( \divp{\ds }  \cap  (\divp{\ds} +t \cdot (0,-1) ), \np(y^{-1}-1) \right) \\
	&=& \begin{cases} 1 & \text{if } 0 \leq t \leq 1 \\ 2-t & \text{if } 1 \leq t \leq 2,
		\end{cases} 
\end{eqnarray*}

where the mixed volume $\mv \left( \divp{\ds }  \cap  (\divp{\ds} +t \cdot (0,-1) ), \np(y^{-1}-1) \right)$ can be seen as the area of the shaded region in Figure~\ref{fig_mv}.

\begin{figure}[h!]
\begin{center}
    \includegraphics[scale=0.2]{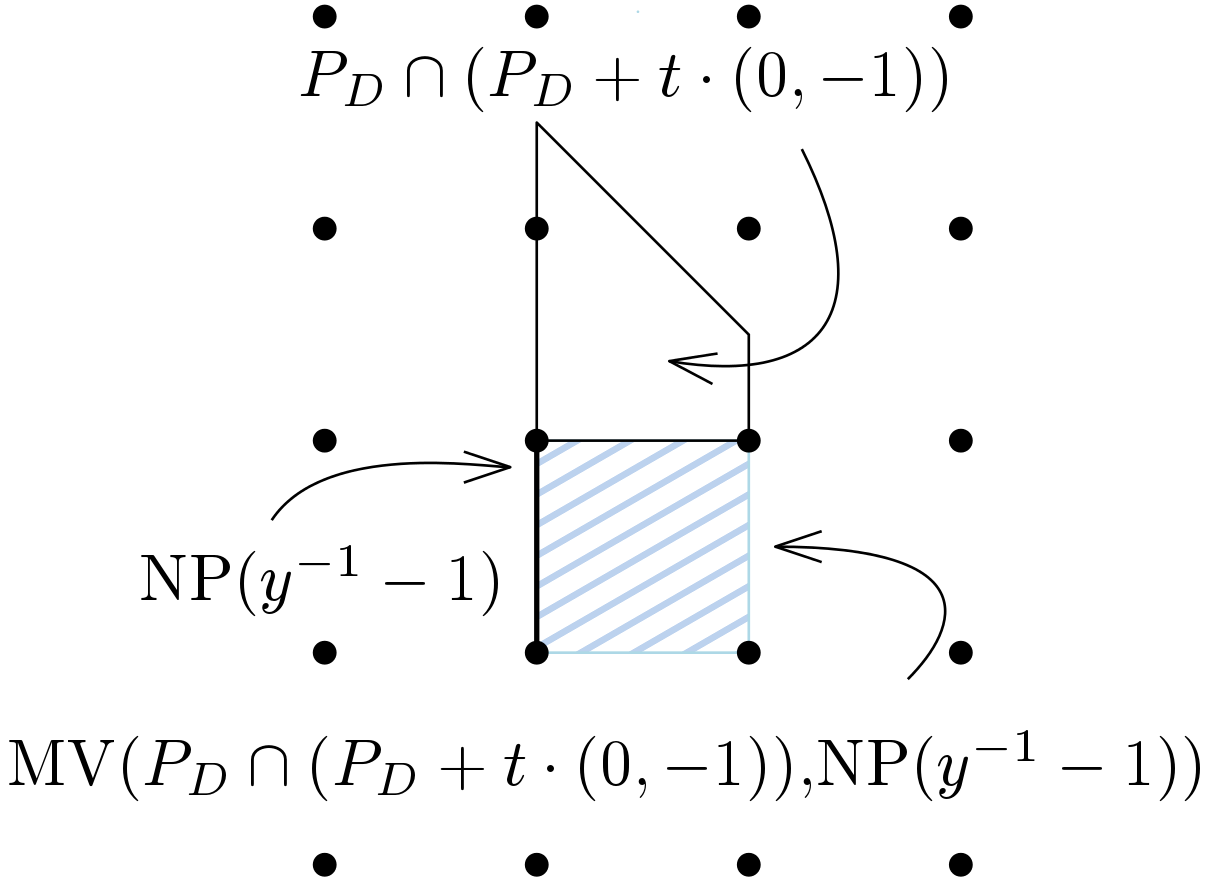}
\caption{The mixed volume $\mv \left( \divp{\ds }  \cap  (\divp{\ds} +t \cdot (0,-1) ), \np(y^{-1}-1) \right)$.} \label{fig_mv}
\end{center}
\end{figure}

Since $\ds$ is nef, we have $\nu= 0$ and since $\point$ can be chosen general enough on $\cur$, we also have $\alpha(t) \equiv 0$. Therefore the Newton--Okounkov body $\nob{\flag}{\ds}$ is the polytope shown in Figure~\ref{fig_nob_hir1}.

\begin{figure}[h!]
\begin{center}
    \includegraphics[scale=0.07]{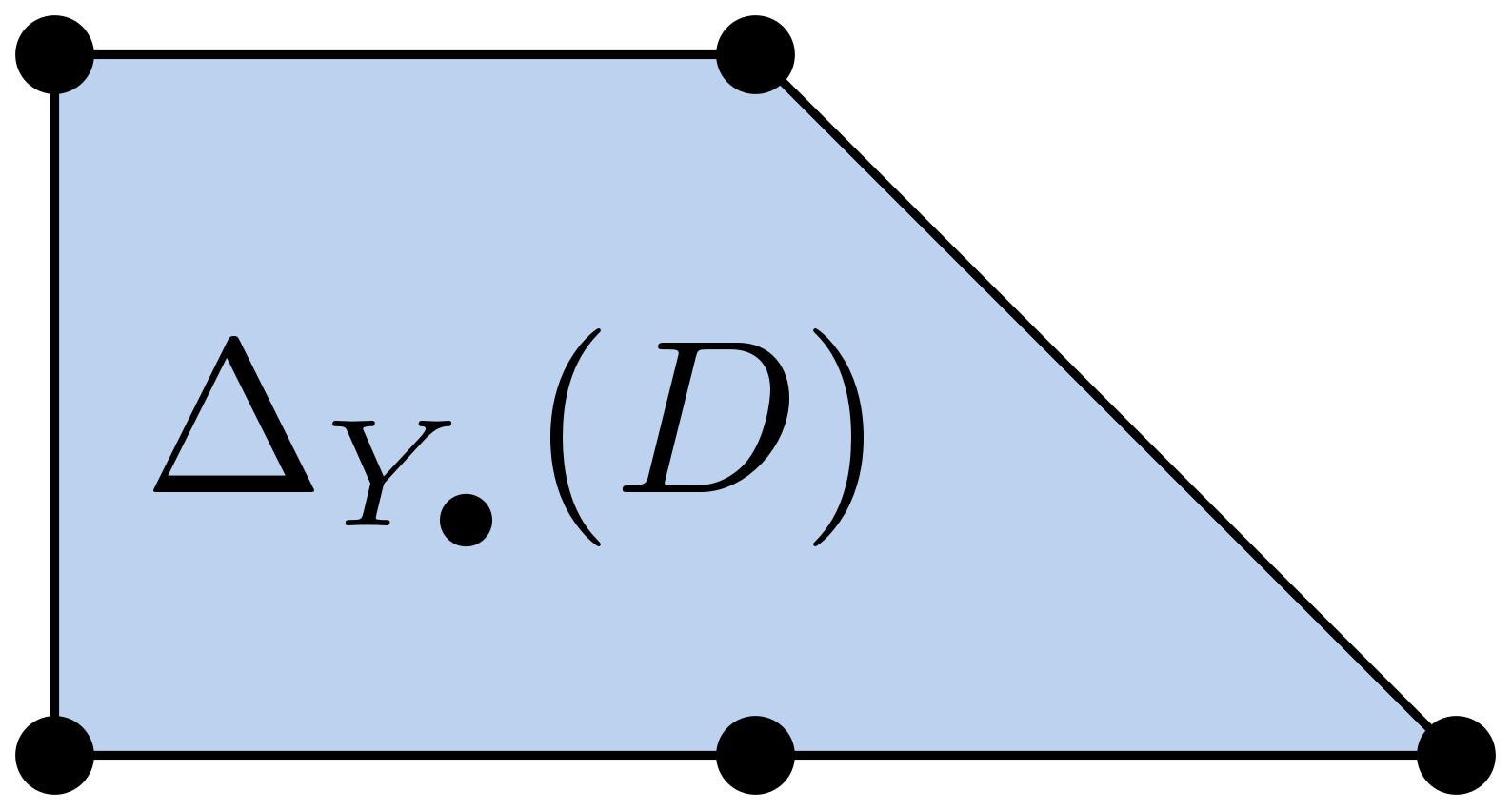}
\caption{The Newton--Okounkov body $\nob{\flag}{\ds}$.} \label{fig_nob_hir1}
\end{center}
\end{figure}

\end{example}

Let us for simplicity assume that $\nu=0$ and that $\alpha \equiv 0$. Then the Newton--Okounkov body $\nob{\flag}{\ds}$ is completely determined by $\beta$.\\
\\
Given the polytope $\divp{\ds}$ and the vector $v$, the procedure described in Proposition~\ref{prop_mv} to compute the function $\beta$ divides the polytope $\divp{\ds}$ into chambers. In the following we consider this process in detail. For that we introduce the following definition. 

\begin{definition}
Let $P \subseteq \R^2$ be a $2$-dimensional polytope and let $\dir \in \R^2$ be a direction. Then we call a facet $\face \preceq P$ \emph{sunny} with respect to $v$ if $\langle v, \nor{\face} \rangle > 0$, where $\nor{\face}$ is the inner facet normal of $\face$. We call the set of all sunny facets of $P$ with respect to $v$ the \emph{sunny side} of $P$ with respect to $v$ and denote it by $\sun{P}{v}$.  
\end{definition}

Let $\sun{\divp{\ds}}{v}$ be the sunny side of $\divp{\ds}$ with respect to $v$. By construction the function $\beta$ is piecewise linear. There is a break point at time $\tilde{t} \geq 0$ if and only if there exists a vertex $\vertex \in \ver(\divp{\ds})$ such that 
\begin{equation*}
\vertex \in \divp{\ds} \cap (\sun{\divp{\ds}}{v}+\tilde{t}v).
\end{equation*}   

Thus we move the sunny side $\sun{\divp{\ds}}{v}$ along the polytope $\divp{\ds}$ in the direction of $v$. We start at time $t_0=0$. Whenever we hit a vertex $\vertex_i \in \ver(\divp{\ds})$ at time $t_i$, we enter a new chamber as indicated in Figure~\ref{fig_movev}. 

\begin{figure}[h!]
\begin{center}
    \includegraphics[scale=0.2]{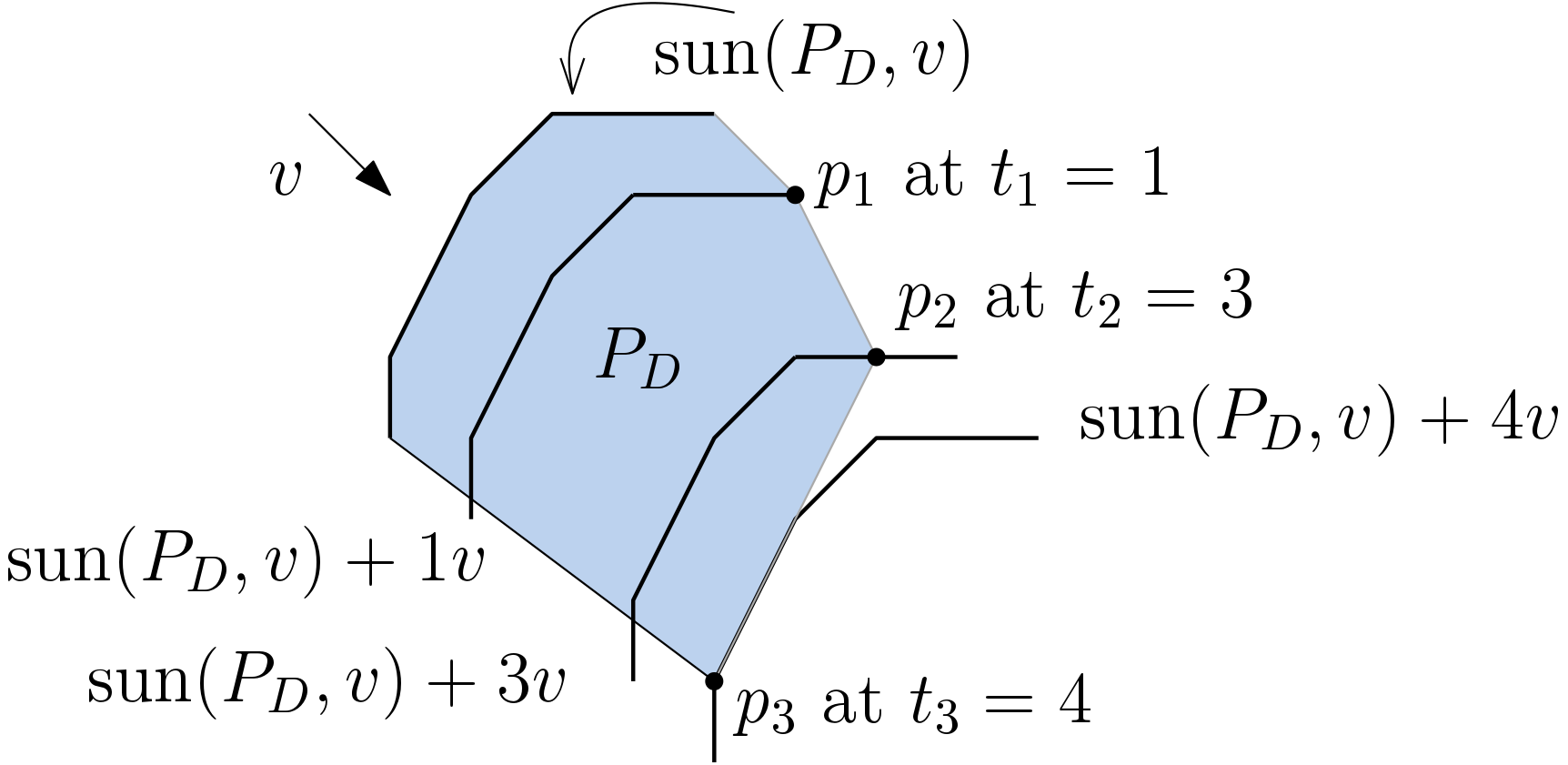}
\caption{Break points $\vertex_1,\vertex_2$, and $\vertex_3$ of shifting the sunny side $\sun{\divp{\ds}}{v}$ through $\divp{\ds}$ in the direction of $v$.} \label{fig_movev}
\end{center}
\end{figure}

Then  $\beta(t)$ is linear in each time interval $\left[ t_i,t_{i+1} \right]$ for $i  \in \Z_{\geq 0}$. \\
\\
The other part of the chamber structure comes from inserting a wall in  the direction of $v$ for each vertex $\vertex \in \sun{\divp{\ds}}{v}$ and in the direction of $-v$ for each vertex $\vertex \in \sun{\divp{\ds}}{-v}$ as it can be seen in Figure~\ref{fig_subdiv}.

\begin{figure}[h!]
\begin{center}
    \includegraphics[scale=0.14]{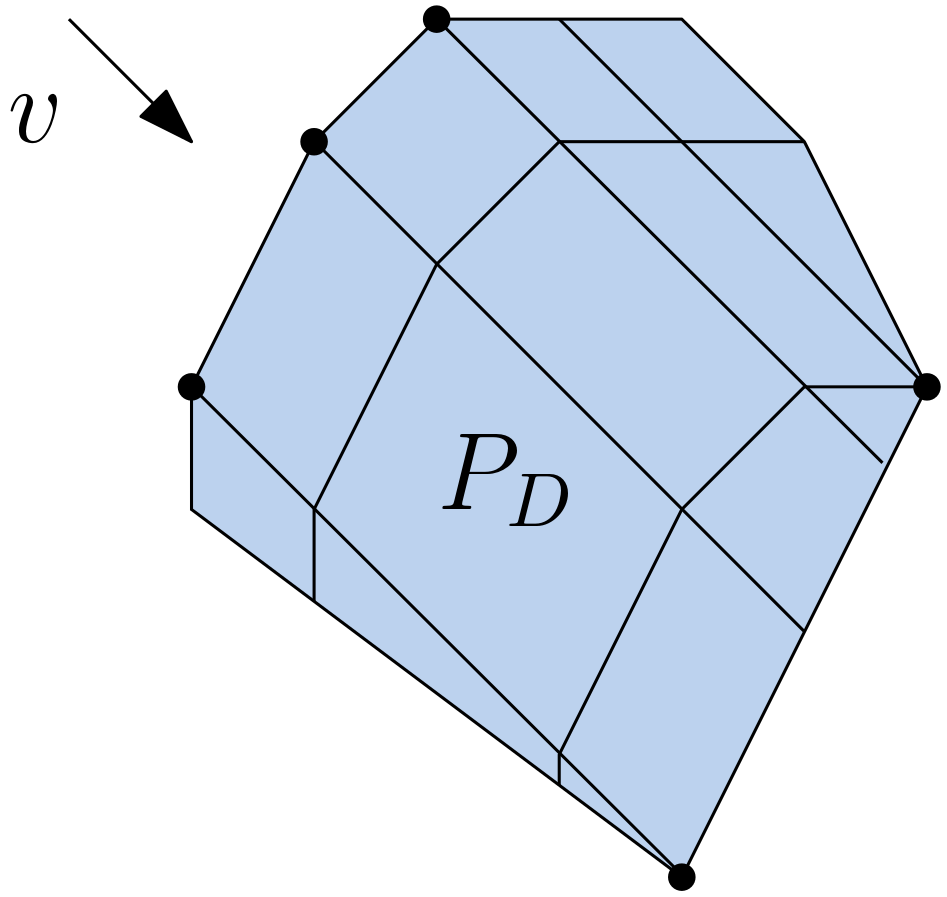}
\caption{The chamber structure on $\divp{\ds}$ induced by the shifting process.} \label{fig_subdiv}
\end{center}
\end{figure}

In the following, we verify that for this particular chamber structure there exists a map between $\divp{\ds}$ and $\nob{\flag}{\ds}$ that is linear on each of the chambers.\\
\\
For that, we choose a coordinate system $m_1,m_2$ for $\latm_\R \cong \R^2$ such that $v=(1,0)$ without loss of generality. Consider the polytope $\divp{\ds} \subseteq \R^2 $ in $(m_1,m_2)$-coordinates, and assume without loss of generality that $\divp{\ds}$ lies in the positive orthant. We can write it as
\begin{equation*}
\divp{\ds}=\left\{ (m_1,m_2) \in \R^2 \suchthat \gamma \leq m_2 \leq \delta, \ \ell(m_2) \leq m_1 \leq r(m_2) \right\},
\end{equation*} 
for some $\gamma,\delta \in \R$ and some piecewise linear functions $\ell$ and $r$ that determine the sunny sides $\sun{\divp{\ds}}{v}$ and $\sun{\divp{\ds}}{-v}$, respectively.
To determine the function $\beta$ using the combinatorial approach from Proposition~\ref{prop_mv}, we shift the sunny side $\sun{\divp{\ds}}{v}$ through the polytope as depicted in Figure~\ref{fig_movev}. Now we want to `tilt the polytope leftwards' such that the $m_1$-coordinate of each point in the image expresses exactly the time at which the point in the original polytope is visited in the shifting process. This is shown in Figure~\ref{fig_left}.
To make this precise, map the polytope $\divp{\ds}$ via
\begin{eqnarray*}
\Psi_\text{left} \colon \divp{\ds}\subseteq \R^2 & \to & \R^2 \\
(m_1,m_2) & \mapsto & (m_1-\ell(m_2),m_2).
\end{eqnarray*}

\begin{figure}[h!]
\begin{center}
    \includegraphics[scale=0.18]{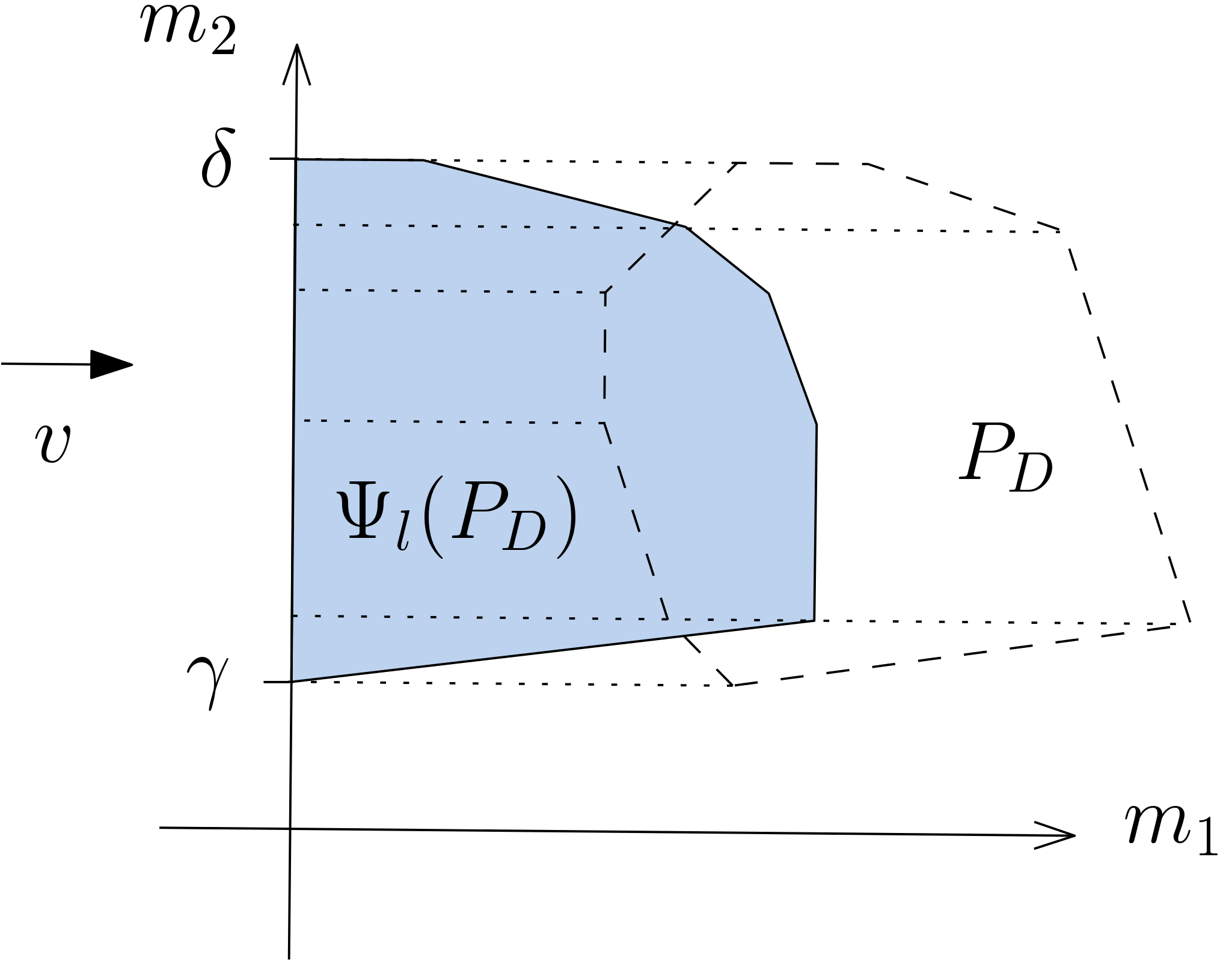}
\caption{Tilting the polytope $\divp{\ds}$ leftwards via the map $\Psi_{\text{left}}$.} \label{fig_left}
\end{center}
\end{figure}

By construction, the map $\Psi_\text{left}$ is a piecewise shearing of the original polytope and therefore volume-preserving. Additionally, $\Psi_\text{left}(\divp{\ds}) \cap \{m_1=t\}$ are exactly the images of the points of $\divp{\ds}$ that are visited at time $t$. 
Given $m_1=t$ we now want to determine $\beta(t)$. According to~(\ref{eqn_beta4}) it is given by 
\begin{eqnarray*}
\beta(t) &=& \mv \left( \divp{\ds }  \cap  (\divp{\ds} +tv ), \np(x^v-1) \right) \\
&=& \mv \left( \divp{\ds }  \cap  (\sun{\divp{\ds}}{v} +tv ), \np(x^v-1) \right) \\
&=& \mv \left( \Psi_\text{left}(\divp{\ds}) \cap \{m_1=t\}, \np(x^v-1) \right) \\
&=& \llen_{\latm}(\Psi_\text{left}(\divp{\ds}) \cap \{m_1=t\}).
\end{eqnarray*}
The last equation holds, since $v$ was chosen to be $(1,0)$. 
In the last step we want to `tilt the polytope downwards' similarly to the previous process as can be seen in Figure~\ref{fig_down}. Therefore we can describe the polytope $\Psi_\text{left} (\divp{\ds})$ as
\begin{eqnarray*}
\Psi_\text{left} (\divp{\ds}) &=& \left\{ (m_1,m_2) \in \R^2 \suchthat \gamma \leq m_2 \leq \delta, \ 0 \leq m_1 \leq r(m_2)-\ell(m_2) \right\} \\
&=& \left\{ (m_1,m_2) \in \R^2 \suchthat 0 \leq m_1 \leq \hat{\delta}, \ \hat{\ell}(m_1) \leq m_2 \leq \hat{r}(m_2) \right\},
\end{eqnarray*}
for some $\hat{\delta} \in \R$ and some piecewise linear functions $\hat{\ell}$ and $\hat{r}$ that determine the bottom and top of the polytope.\\
\\
So set
\begin{eqnarray*}
\Psi_\text{down} \colon \Psi_\text{left} (\divp{\ds})\subseteq \R^2 & \to & \R^2 \\
(m_1,m_2) & \mapsto & (m_1,m_2-\hat{\ell}(m_1)).
\end{eqnarray*}

\begin{figure}[h!]
\begin{center}
\includegraphics[scale=0.18]{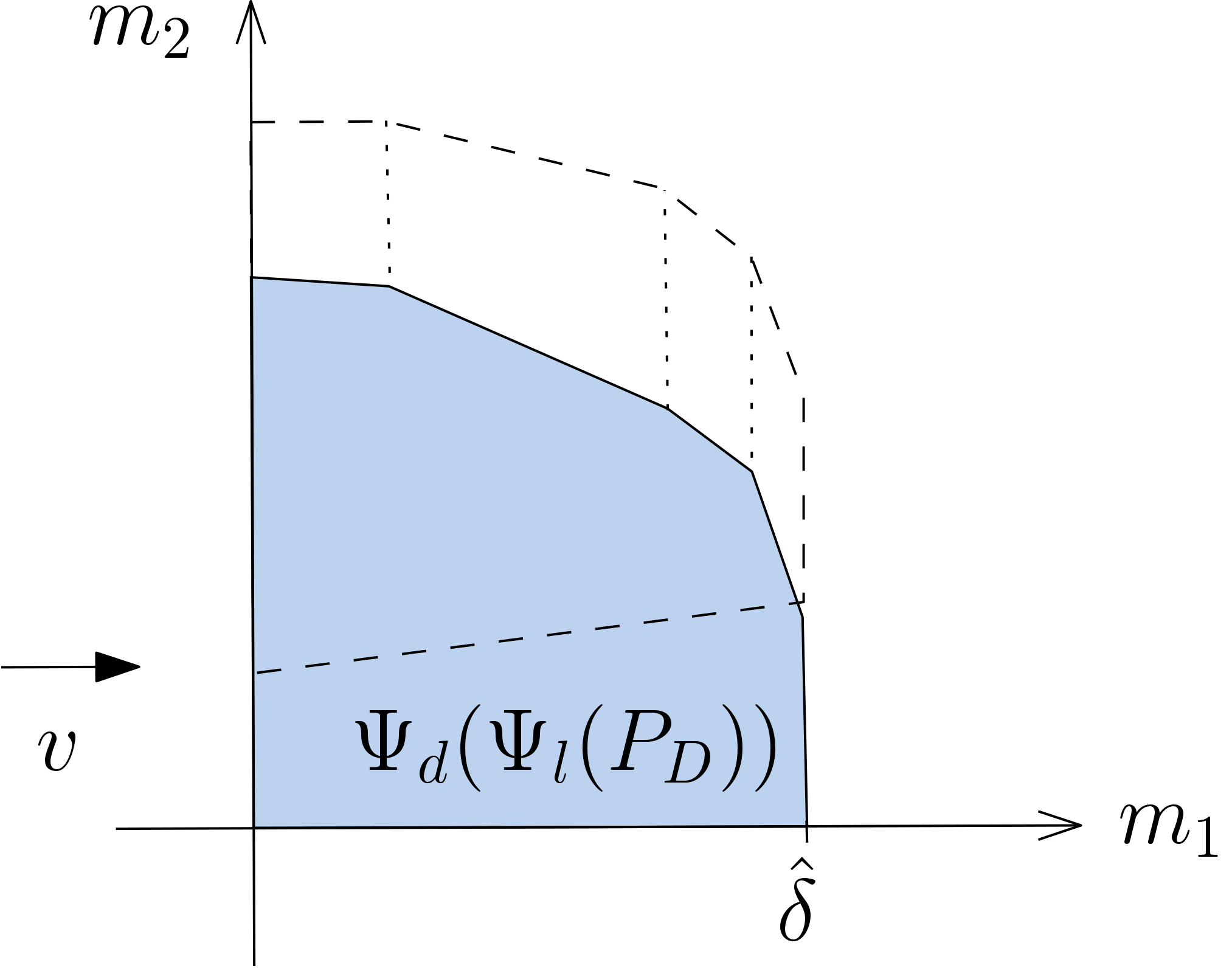}
\caption{Tilting the polytope $\Psi_{\text{left}}(\divp{\ds})$ downwards via the map $\Psi_{\text{down}}$.} \label{fig_down}    
\end{center}
\end{figure}

By construction this is again a piecewise shearing of the polytope and therefore volume-preserving. The image $\Psi_\text{down}(\Psi_\text{left} (\divp{\ds}))$ is the subgraph of $\beta$ and thus it coincides with the Newton--Okounkov body $\nob{\flag}{\ds}$ with respect to the new flag $\flag$.\\
\\
The above shows the following.

\begin{corollary}\label{cor_mv}
Let $\var$ be a smooth projective toric surface, $\ds$ a big divisor,
and \linebreak $\flag \colon \var \supseteq \cur \supseteq \{\point\}$ an
admissible flag on $\var$, where the curve $\cur$ is given by a
binomial $x^v-1$ for a primitive $\dir \in \Z^2$ and $\point$ is a
general smooth point on $\cur$. Then there exists a piecewise linear,
volume-preserving isomorphism $\pwl=\Psi_{\operatorname{down}}\circ \Psi_{\operatorname{left}}$ between the two Newton--Okounkov bodies
$\divp{\ds}$ and $\nob{\flag}{\ds}$.
\end{corollary}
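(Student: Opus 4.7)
The plan is to formalize the construction sketched in the preceding discussion as a composition $\pwl = \Psi_{\operatorname{down}} \circ \Psi_{\operatorname{left}}$, and verify the three claimed properties of $\pwl$ (piecewise linear, volume preserving, bijection onto $\nob{\flag}{\ds}$) in turn. After applying a unimodular change of coordinates on $\latm_\R \simeq \R^2$ I may assume $\dir = (1,0)$ and translate so that $\divp{\ds}$ lies in the positive orthant, giving a presentation
\[\divp{\ds} = \{(m_1,m_2) : \gamma \le m_2 \le \delta, \ \ell(m_2) \le m_1 \le r(m_2)\},\]
where $\ell$ is a convex piecewise linear function encoding the sunny side $\sun{\divp{\ds}}{\dir}$ and $r$ is concave piecewise linear.

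First I would set $\Psi_{\operatorname{left}}(m_1,m_2) \coloneqq (m_1 - \ell(m_2), m_2)$. On each maximal interval of $m_2$ on which $\ell$ is affine, this is an integral shearing, hence unimodular and volume preserving; globally it is piecewise linear and injective, since $m_1$ is recovered from the image by adding $\ell(m_2)$ back. The image $\Psi_{\operatorname{left}}(\divp{\ds})$ has lower-left corner at the origin and, for each $t \in [0,\mu]$, its vertical slice at $m_1 = t$ is precisely the $\Psi_{\operatorname{left}}$-image of $\divp{\ds} \cap (\sun{\divp{\ds}}{\dir} + t\dir)$. Combining this with the identity $\divp{\ds} \cap (\divp{\ds} + t\dir) = \divp{\ds} \cap (\sun{\divp{\ds}}{\dir} + t\dir)$ and Proposition~\ref{prop_mv}, and using that $\dir = (1,0)$ reduces a mixed volume against $\np(x^\dir - 1) = \conv(\origin,\dir)$ to a one-dimensional lattice length, one sees that this slice has length $\beta(t)$.

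Next I would apply $\Psi_{\operatorname{down}}(m_1,m_2) \coloneqq (m_1, m_2 - b(m_1))$, where $b$ is the piecewise linear function describing the lower boundary of $\Psi_{\operatorname{left}}(\divp{\ds})$. This is another piecewise integral shearing, hence piecewise linear and volume preserving, and it sends each slice $\Psi_{\operatorname{left}}(\divp{\ds}) \cap \{m_1 = t\}$ rigidly down to the segment $\{t\}\times[0,\beta(t)]$. Thus $\pwl$ maps $\divp{\ds}$ bijectively onto the subgraph $\{(t,m): 0 \le t \le \mu, \ 0 \le m \le \beta(t)\}$, which under the simplifying assumption $\nu = 0$ and $\alpha \equiv 0$ made before the statement is exactly $\nob{\flag}{\ds}$ by Theorem~\ref{thm_function}; the general case follows by the same bookkeeping.

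The main obstacle is not any single computation but rather organizing the chamber structures so that the two shearings are compatible: each must be affine on a common refinement of the decompositions of $\divp{\ds}$ induced by the walls through the vertices of $\sun{\divp{\ds}}{\pm\dir}$ in directions $\pm \dir$, and the two piecewise definitions must agree across chamber boundaries. This is visible in Figures~\ref{fig_movev} and \ref{fig_subdiv}: the wall decomposition of $\divp{\ds}$ was chosen precisely so that on each chamber both $\ell(m_2)$ and $b(\Psi_{\operatorname{left}}^{-1}(m_1))$ are affine, so $\pwl$ is affine unimodular on that chamber. Continuity of $\pwl$ along shared edges then follows because $\ell$ and $b$ are themselves continuous, which upgrades the chamberwise construction to a global piecewise linear volume-preserving bijection.
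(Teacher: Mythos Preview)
Your proposal is correct and follows essentially the same approach as the paper: the paper's proof is the discussion immediately preceding the corollary (summarized by ``The above shows the following''), which introduces the same coordinate normalization $v=(1,0)$, the same piecewise shearings $\Psi_{\operatorname{left}}$ and $\Psi_{\operatorname{down}}$, and the same identification of vertical slice lengths with $\beta(t)$ via Proposition~\ref{prop_mv}. Your write-up is a faithful formalization of that argument, including the chamber-structure bookkeeping the paper handles via Figures~\ref{fig_movev}--\ref{fig_down}.
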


Moreover, the image under $\pwl$ can explicitly be described in terms of measurements of the polytope. For that we need to introduce more terminology. 

\begin{definition}
Let $\pol \subseteq \R^\dimm$ be a polytope, $\dir \in \Z^\dimm$ a primitive vector, and $\linf \in \dir^\perp$ a primitive integral functional.

For a point $\op \in \pol$ we define the
\emph{length of $\pol$ at $\op$ with respect to $\dir$} to be
\begin{equation*}
\relw{\pol,\op}{\dir}\coloneqq \max{ \left\{ t \in \R \suchthat \op -
    t\dir \in \pol \right\} }, 
\end{equation*}

and $\relw{\pol}{\dir}$ is the maximal length over all $\op \in \pol$.


Further, we denote by $\feasible{\pol}{\dir}{\op}$ the intersection of
$$ P \cap \left( P + \relw{\pol,\op}{\dir} \cdot \dir \right) $$
with the half plane given by $\langle \linf, \cdot \rangle \ge \langle
\linf, \op \rangle$.

\begin{figure}[htb]
  \centering
  \includegraphics[scale=0.2]{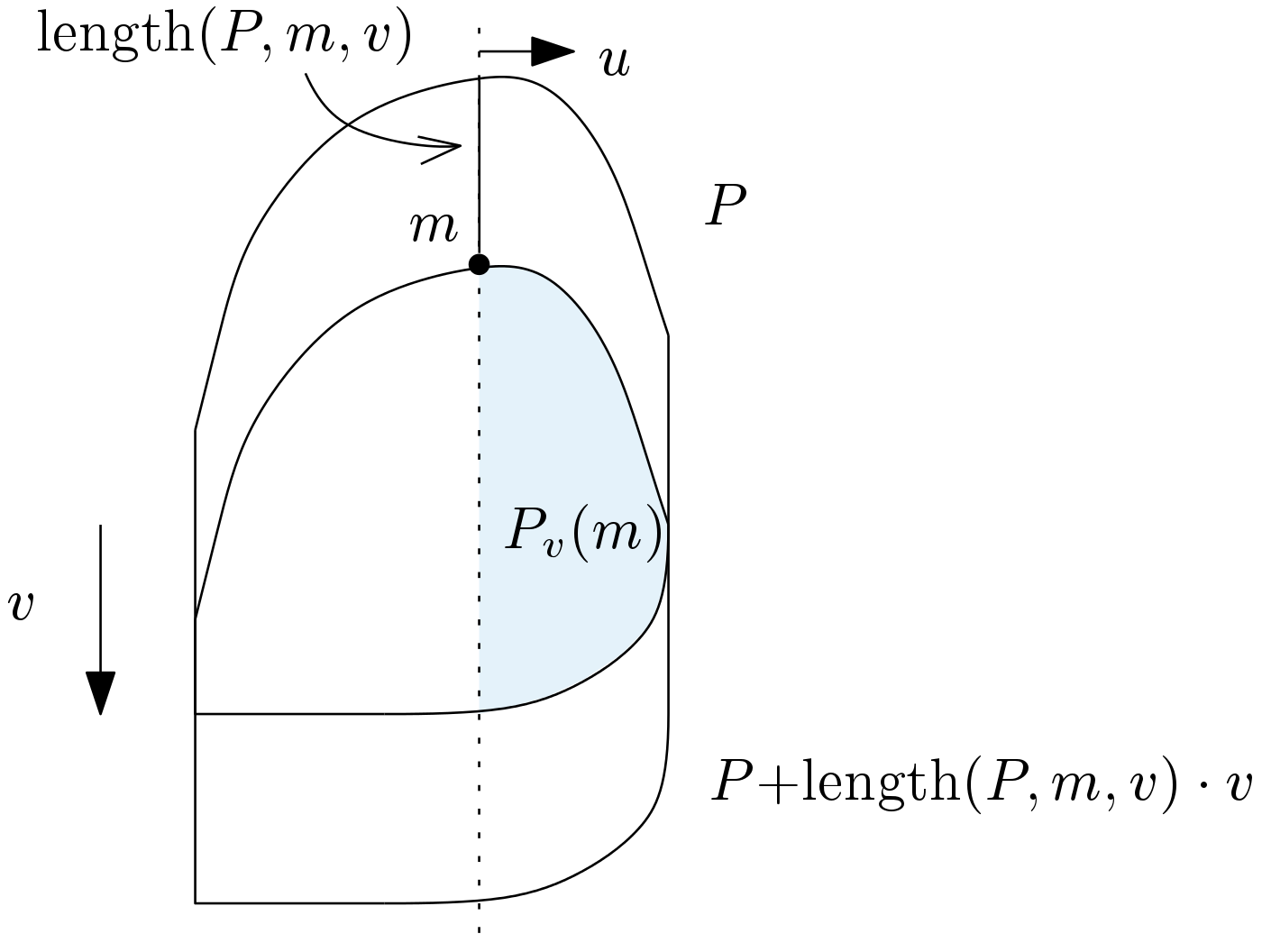}
  \caption{The feasible region in $\pol$ with respect to $\dir$ and $\linf$, given
    $\op \in \pol$.}
  \label{fig:feasible-region}
\end{figure}

\end{definition}

Observe that with the above notation $\pwl(\op) =
(\relw{\pol,\op}{\dir},\wid{\linf}(\feasible{\pol}{\dir}{\op}))$.%

\begin{remark}\label{rem_wallcrossing}
Note, that the piecewise linear, volume-preserving isomorphism $\pwl$ is
reminiscent of the transformation constructed in~\cite{escobar2019wall}. The authors give geometric maps between the Newton--Okounkov bodies corresponding to two adjacent maximal-dimensional prime cones in the tropicalization of the variety $\var$. This can also be studied from the perspective of complexity-one T-varieties. 
\end{remark}


\section{Newton--Okounkov Functions on Toric Varieties}\label{sec_function}

This section examines Newton--Okounkov functions in three settings. To start with, we consider the completely toric case in Subsection~\ref{sec:torictoric} and show that in this case the resulting function will be linear, see Proposition~\ref{prop_distance}. This is related to a result that identifies the subgraph of a Newton--Okounkov function as a certain Newton--Okounkov body. We translate this relation into polyhedral language in Subsection~\ref{sec:subgraph}. Eventually, we return to the surface case in Subsection~\ref{sec:general-point} and examine Newton--Okounkov functions coming from the geometric valuation at a general point and give combinatorial criteria for when we can fully determine the function, see Theorem~\ref{prop_a+b}, Corollary~\ref{cor_par} and Theorem~\ref{thm_fct_zonotopally}. 

\subsection{The Completely Toric Case}
\label{sec:torictoric}
Whenever we determine the value of a Newton--Okounkov function
$\fct(m)$ for a point ${m \in \nob{\flag}{\ds}}$, we will often assume
that $m$ is a valuative point if not mentioned otherwise. 
In the case, when all the given data is toric, we can completely describe the function $\fct_\sv$, and it even has a nice geometric interpretation.

By `all data toric' we mean that $\var$ is a smooth toric variety, $\flag$ is a flag consisting of torus-invariant subvarieties, $\ds$ is a big torus-invariant divisor on $\var$, and ${\sv \subseteq \var}$ a torus-invariant subvariety.\\
\\
In order to formulate and prove Proposition~\ref{prop_distance} below,
we recall the combinatorics of the blow-up ${\pi_\sv \colon \var^* \to
\var}$ of $\sv$.
As $\sv$ is torus-invariant, it corresponds to a cone ${\ldc \in \fan}$
of the fan.
According to~\cite[Definition 3.3.17]{cox2011toric} the fan $\fan^*$
in $\lat_\R$ of the variety $\var^*$ is given by the star subdivision
of $\fan$ relative to $\ldc$. Set ${u_\ldc=\sum_{\ray \in
  \ldc(1)}{u_\ray}}$, ${\ray_\sv = \cone (u_\ldc)}$, and for each cone
${\con \in \fan}$ containing $\ldc$, set
\begin{equation*}
\fan^*_\con(\ldc)=\left\{ \con' + \ray_\sv \suchthat \ldc \nsubseteq \con'
  \subset \con \right\}
\end{equation*}
and the star subdivision of $\fan$ relative to $\ldc$ is the fan
\begin{equation*}
\fan^* = \fan^*(\ldc)=\left\{ \con \in \fan \suchthat \ldc \nsubseteq \con
\right\} \cup \bigcup_{\con \supseteq \ldc}{\fan^*_\con(\ldc)}.
\end{equation*}
Then the exceptional divisor $E$ of the blow-up $\pi_\sv$ corresponds
to the ray $\ray_\sv \in \fan^*$, and the order of vanishing of a
section $s$ along $\sv$ is, by definition, the order of vanishing of
$\pi_\sv^*(s)$ along $E$.
The Cartier data $\{m^*_{\con^*}\}_{\con^* \in \fan^{*}(\dimm)}$ of
$\pi_\sv^*\ds$ is given by $m^*_{\con^*} = m_{\con^*}$ for $\con^* \in
\fan(\dimm)$ (i.e., ${\con^* \not\supseteq \ray_\sv}$), and $m^*_{\con^*} =
m_{\con}$ for $\con^* \in \fan^*_\con(\ldc)(\dimm)$.

\begin{proposition}\label{prop_distance}
Let $\var$ be an $n$-dimensional smooth projective toric variety
associated to the unimodular fan $\fan$ in $\lat_\R$. Furthermore, let
$\flag$ be an admissible torus-invariant flag and $\ds$
a big
torus-invariant divisor on $\var$ with resulting Newton--Okounkov body
$\nob{\flag}{\ds}$.\\
\\
Let $\sv \subseteq \var$ be an irreducible torus-invariant subvariety. Then the geometric valuation $\ord_Z$ yields a linear function $\fct_\sv$ on $\nob{\flag}{\ds}$. More explicitly, it is given by 
\begin{align*}
\fct_\sv  \colon \nob{\flag}{\ds} & \to \R \\
m & \mapsto  \langle m-m_\ldc,u_\ldc \rangle,
\end{align*}   
where $m_\ldc\coloneqq m_\con$ is part of the Cartier data
$\{m_\con\}_{\con \in \fan(\dimm)}$ of $\ds$ for any cone $\con \in
\fan$ containing $\ldc$.
\end{proposition}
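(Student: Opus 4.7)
The plan is to exploit the completely toric setting, in which $\Delta_{Y_\bullet}(D)$ coincides with $P_D$ (in the normalization of Section~\ref{sec_nob_toric}) and the valuation-like map $\nu_{Y_\bullet}$ sends a section $s = \sum c_{m'} \chi^{m'}$ to the lexicographically smallest exponent in its support. Consequently, for a rational valuative point $m \in \Delta_{Y_\bullet}(D)$, choosing $k$ large enough to clear denominators, the monomial $\chi^{km}$ is a section of $\mathcal{O}(kD)$ whose valuation is $km$, and any other section $s \in \Gamma(X,\mathcal{O}(kD))$ with $\nu_{Y_\bullet}(s) = km$ must contain $km$ in its support.

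First I would compute $\operatorname{ord}_Z(\chi^m)$ by pulling back to the blow-up $\pi_Z \colon X^* \to X$ recalled before the statement. Since $E$ corresponds to the ray $\rho_Z = \operatorname{cone}(u_\tau)$ in the star subdivision $\Sigma^*$, and the Cartier data of $\pi_Z^* D$ satisfies $m^*_{\sigma^*} = m_\sigma$ whenever $\sigma^* \in \Sigma^*_\sigma(\tau)(n)$ with $\sigma \supseteq \tau$, the section $\chi^m$ of $\mathcal{O}(D)$ pulls back on the chart $U_{\sigma^*}$ to the regular function $\chi^{m - m_\sigma}$. Hence
\[
\operatorname{ord}_Z(\chi^m) \;=\; \operatorname{ord}_E(\chi^{m - m_\sigma}) \;=\; \langle m - m_\sigma, u_\tau \rangle \;=\; \langle m - m_\tau, u_\tau \rangle,
\]
where the last equality records that the support function of $D$ is linear on every maximal cone containing $\tau$, so $\langle m_\sigma, u_\tau \rangle$ is independent of the choice of $\sigma \supseteq \tau$ and $m_\tau$ is well defined modulo $\tau^\perp$.

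Next, for a general section $s = \sum c_{m'} \chi^{m'}$, the local expression on $U_{\sigma^*}$ is $\sum c_{m'} \chi^{m' - m_\sigma}$. Because distinct characters are linearly independent as regular functions on the toric chart, the terms of minimal order along $E$ cannot cancel, which yields the identity $\operatorname{ord}_Z(s) = \min_{m' \in \operatorname{supp}(s)} \langle m' - m_\tau, u_\tau \rangle$. Combining this with $km \in \operatorname{supp}(s)$ gives
\[
\operatorname{ord}_Z(s) \;\leq\; \langle km - k m_\tau, u_\tau \rangle \;=\; k\,\langle m - m_\tau, u_\tau \rangle,
\]
with equality attained by $s = \chi^{km}$. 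Dividing by $k$ and passing to the limit produces $\tilde{\varphi}_Z(m) = \langle m - m_\tau, u_\tau \rangle$ on every rational valuative $m$. Since this formula is already affine-linear on $\Delta_{Y_\bullet}(D)$, it agrees with its own concave envelope, coincides with $\varphi_Z$ on the dense set $\operatorname{Val}_{Y_\bullet}$, and extends continuously to the entire Newton--Okounkov body.

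The main obstacle is checking that cancellation does not occur in the lowest-order terms of a general section along $E$; once the linear independence of distinct characters on the chart $U_{\sigma^*}$ is invoked, the remainder is bookkeeping with the Cartier data of $\pi_Z^* D$ recalled just before the statement.
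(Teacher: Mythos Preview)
Your proposal is correct and follows essentially the same approach as the paper: both compute $\operatorname{ord}_Z(\chi^m)$ via the blow-up and the ray $\rho_Z$, then sandwich $\tilde\varphi_Z(m)$ between the value on the monomial $\chi^{km}$ (lower bound) and the minimum over the support of a general section with $\nu_{Y_\bullet}(s)=km$ (upper bound). If anything, your write-up is slightly more careful than the paper's in spelling out why $km$ must lie in $\operatorname{supp}(s)$, why $\langle m_\sigma,u_\tau\rangle$ is independent of the choice of $\sigma\supseteq\tau$, and how the passage from $\tilde\varphi_Z$ to $\varphi_Z$ via the concave envelope is immediate for an affine-linear formula.
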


This function $\fct_Z$ measures the lattice distance of a given point
$m$ in the Newton--Okounkov body to the hyperplane with equation
$\langle m , u_\ldc \rangle = \langle m_\ldc,u_\ldc \rangle$. 
If $\ds$ is ample
this is the lattice distance to a face of $\nob{\flag}{\ds}$.

\begin{proof}
Since the flag $\flag$ and the divisor $\ds$ are torus-invariant, the
resulting Newton--Okounkov body $\nob{\flag}{\ds}$ coincides with a
translate of the polytope $\divp{\ds}$.\\
\\
We consider the blow-up ${\pi_\sv\colon \var^* \to \var}$ of $\sv$. Let $\flag^*$ denote the proper transform of $\flag$ on $\var^*$. The pullback $\pi_\sv^*\ds$ of the given divisor $\ds$ determines a polytope $\divp{\pi_\sv^*\ds}$ and by construction we have ${\divp{\ds}\cong  \divp{\pi_\sv^*\ds}}$. To embed the Newton--Okounkov body ${\nob{\flag^*}{\pi_\sv^*\ds} \cong \divp{\pi_\sv^*\ds}}$ in $\R^n$ we have to fix a trivialization of the line bundle. Fix the origin $\fn$ of $\R^n$ to be $m_\ldc$. If $m_\ldc \in \divp{\pi_\sv^*\ds}$, this means that the corresponding character $\chi^{\fn}$ is identified with a global section $s$ of $\she_{\var^*}(\pi_\sv^*\ds)$ that does not vanish along $\sv$. \\
\\
Then according to~\cite[Proposition 4.1.1]{cox2011toric} the order of vanishing of a character $\chi^m$ along $\sv$ is given as 
\begin{equation*}
\ord_\sv(\chi^m) = \ord_E(\chi^m)= \langle m, u_\ldc \rangle
\end{equation*} 
for $m \in  \nob{\flag^*}{\pi_\sv^{*}\ds}$. \\
\\
For a given point $m \in \nob{\flag^*}{\pi_\sv^*\ds}$ let $s \in \sps(\var^*,\she_{\var^*}(k \pi_\sv^*\ds))$ be an arbitrary global section that gets mapped to $m$ by the flag valuation associated to $\flag^*$ for some suitable $k \in \N$. Write $s$ in local coordinates $x_i$ with respect to
the flag $\flag^*$, that is, $Y^*_i$ is given by $x_1=\ldots=x_i=0$ and in particular
$\fn = Y^*_n$.\\
\\
The change of coordinates is obtained by multiplication by the monomial $\chi^{m_\ldc}$ on the level of functions and by a respective translation by the vector $m_\ldc \in M$ on the level of points. This yields
\begin{equation*}
\ord_\sv(\chi^m)=\langle m-m_\ldc,u_\ldc \rangle.
\end{equation*}  
The section $s$ is identified with a linear combination of characters, in which $\chi^m$ appears with non-zero coefficient. This gives the upper bound
\begin{equation*}
\ord_\sv(s)= \min_{m' \in \supp(s)} \ord_\sv(\chi^{m'})\leq \ord_\sv(\chi^m). 
\end{equation*}
The lower bound is realized by the monomial $\chi^m$ itself. Hence, the function that comes from the geometric valuation along the subvariety $\sv$ is given as 
\begin{equation*}
\fct_Z(m)=\ord_\sv(\chi^m)=\langle m-m_\ldc,u_\ldc \rangle
\end{equation*}
for $m \in \nob{\flag^*}{\pi^*_\sv\ds}=\nob{\flag}{\ds}$.
\end{proof}

We give an example to illustrate the proof.

\begin{example} \label{eg:torictoric:F1}
As in Example~\ref{ex_hir1} we consider the Hirzebruch surface $\var=\hir_1$, an admissible torus-invariant flag $\flag$, and the big divisor ${\ds=\ds_3+2\ds_4}$. As a torus-invariant subvariety ${\sv \subseteq \var}$ consider the torus fixed point associated to the cone ${\ldc=\cone((-1,0),(0,1))}$. 

\begin{figure}[h!]
\begin{center}
\includegraphics[scale=0.12]{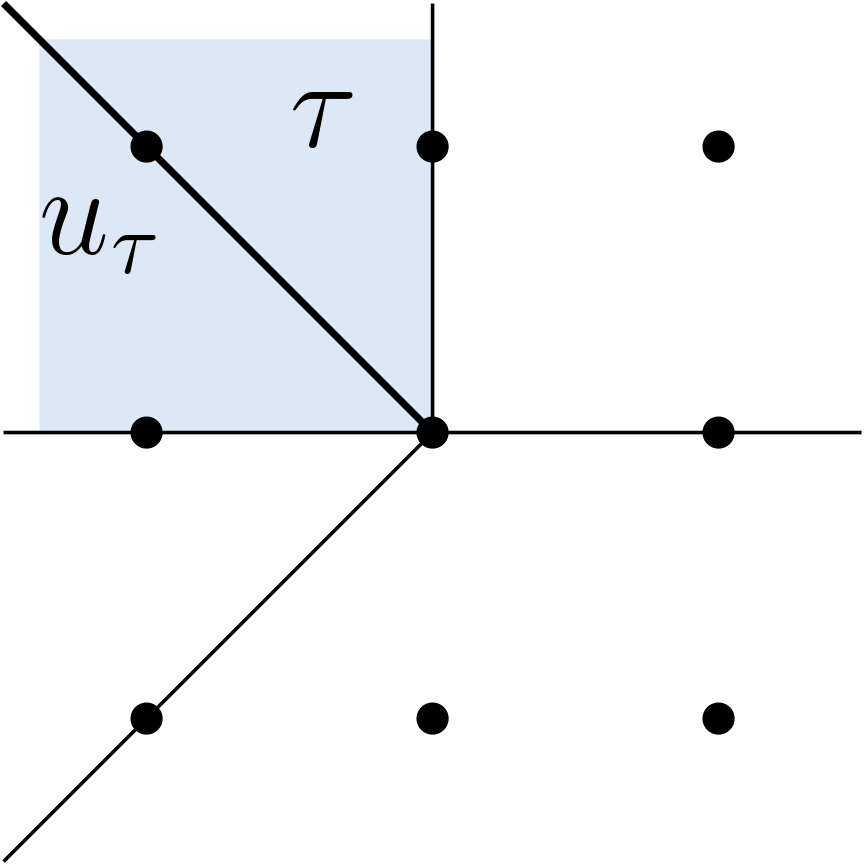}
\caption{Star subdivision of the fan $\fan$ relative to $\ldc$.} \label{fig_star_subd}    
\end{center}
\end{figure}

Then the additional primitive ray generator ${u_\ldc=(-1,1)=(-1,0)+(0,1)}$ for the fan $\fan^*$ comes from the star subdivision of the fan $\fan$ relative to the cone $\ldc$ as indicated in Figure~\ref{fig_star_subd}. The Newton--Okounkov function $\fct_\sv$ on the Newton--Okounkov body $\nob{\flag^*}{\pi^*_\sv\ds}$ is given by 
\begin{equation*}
\fct_\sv(m)= \langle m-m_\ldc,u_\ldc\rangle= \langle m- (1,0),(-1,1)\rangle, 
\end{equation*} 
which gives the values shown in Figure~\ref{fig_latdist}.

\begin{figure}[h!]
\begin{center}
\includegraphics[scale=0.13]{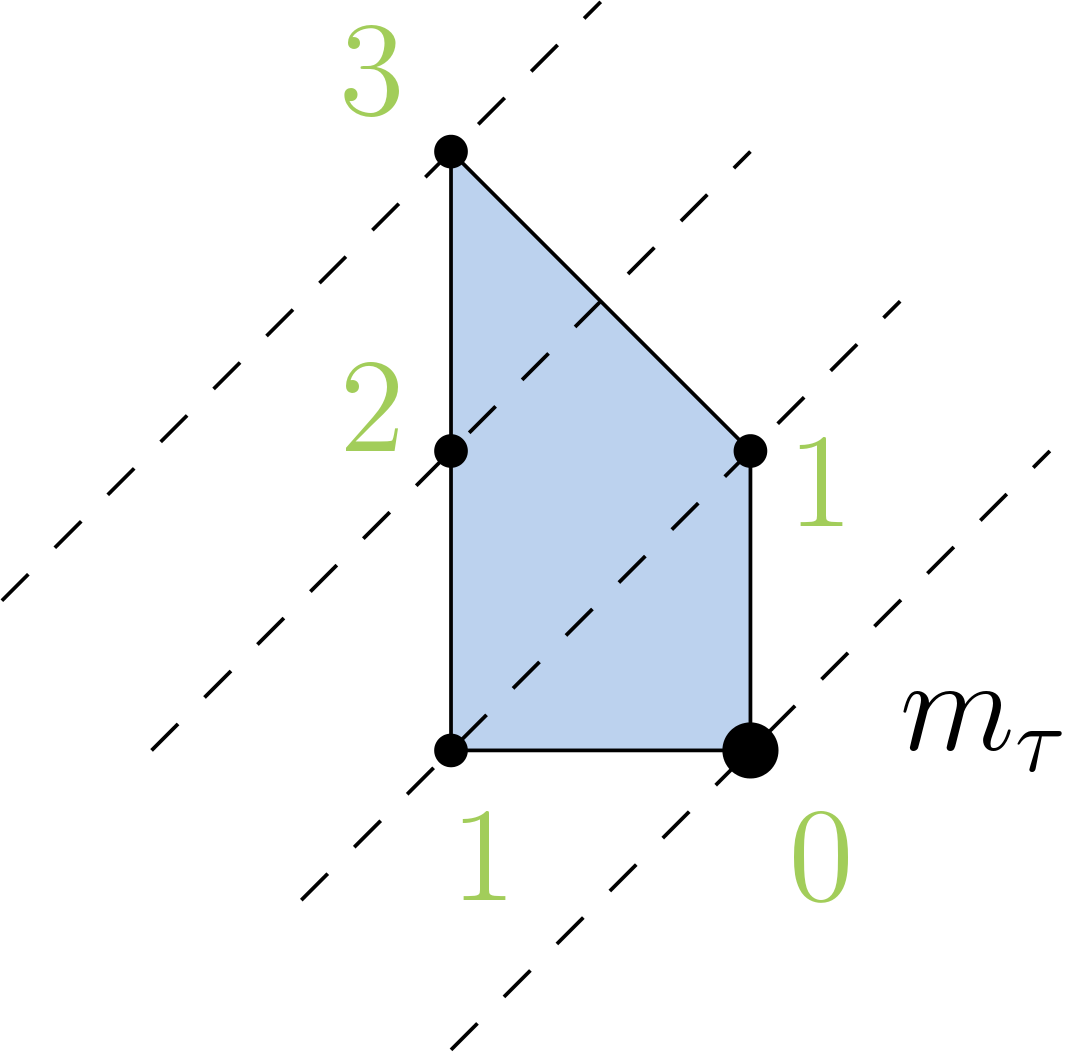}
\caption{Values of the Newton--Okounkov function $\fct_\sv$ associated to the distance to $m_\ldc$.} \label{fig_latdist}    
\end{center}
\end{figure}

\end{example}


\subsection{Interpretation of a Subgraph as a Newton--Okounkov Body}\label{sec:subgraph}

Let $\var$ be a smooth projective variety, $\flag$ an admissible flag
and $\ds$ a big $\Q$-Cartier divisor 
on $\var$. This determines the Newton--Okounkov body
$\nob{\flag}{\ds}$. Given a smooth subvariety $\sv \subseteq \var$, we
consider the function $\fct_{\sv}$ on $\nob{\flag}{\ds}$ that comes
from the geometric valuation $\ord_{\sv}$.

In \cite{KMR} K\"uronya, Maclean and Ro\'{e} construct a
variety $\hat{\var}$, a flag $\hat{Y}_\bullet$ and a
divisor $\hat\ds$
on $\hat{\var}$ so that the resulting Newton--Okounkov body is the
subgraph of $\fct_{\sv}$ over $\nob{\flag}{\ds}$. We translate their
construction into polyhedral language in the toric case.

According to Lemma 4.2 in \cite{KMR} we may assume that the geometric
valuation $\ord_{\sv}$ comes from a smooth effective Cartier divisor
$\dsl$ on $\var$, i.e. $\ord_{\sv}= \ord_{\dsl}$. This can always be
guaranteed by possibly blowing up $\var$
(compare~\S\ref{eg:torictoric:F1}).

Set 
\begin{equation*}
\hat{\var} \coloneqq \PP_{\var}(\she_{\var} \oplus \she_{\var}(\dsl)).  
\end{equation*}
In other words, we consider the total space of the line bundle
$\she_\var(\dsl)$ and compactify each fiber to a $\PP^1$. 
We denote by $\pi$ the projection $\hat\var
\overset{\pi}{\longrightarrow} \var$.
The zero-section of $\she_\var(\dsl)$ is a divisor $\var_0
\lhook\joinrel\xrightarrow{\iota_0} \hat\var$ which is isomorphic
to $\var$. The same is true for the $\infty$-section $\var_\infty
\lhook\joinrel\xrightarrow{\iota_\infty} \hat\var$.

In our toric situation, $\hat\var$ is again toric, and its fan is
described in Proposition 7.3.3 of \cite{cox2011toric} as follows.
The local equation of $\dsl$ as a Cartier divisor along a toric
patch $U_\sigma$ is a torus character which corresponds to a linear
function on $\sigma$. These linear functions glue to the support
function $\spf_\dsl \colon |\fan| \to \R$ of $\dsl$ (see
Definition~4.2.11 \& Theorem~4.2.12 in \cite{cox2011toric}).
Using $\spf_\dsl$, we define an upper and a lower cone in $\lat_\R\times\R$ for every $\con \in \fan$:
\begin{align*}
  \hat\con &\coloneqq \left\{ (u,h) \in \lat_\R\times\R
                  \suchthat u \in \con \ , \ h \ge \spf_\dsl(u) \right\} \\
  \check\con &\coloneqq \left\{ (u,h) \in \lat_\R\times\R
                  \suchthat u \in \con \ , \ h \le \spf_\dsl(u) \right\} \,.
\end{align*}
Together with their faces, these cones form a fan $\hat\fan$ which
determines our $\hat\var$.
The upper and lower cones of the origin $\origin \in \fan$, are rays
$\hat\origin$ and $\check\origin$ whose toric divisors in
$\hat\var$ are $\var_0$ and $\var_\infty$, respectively.
The projection $\hat\var \to \var$ is toric. It comes from the
projection $N \times \Z \to N$ which identifies both
$\sta(\hat\origin)$ and $\sta(\check\origin)$ with $\fan$.

\begin{example} \label{eg:Xhat}
Let $\var_\fan= \PP^1$ be the projective line. Its corresponding fan
$\fan$ in $\R$ is depicted in Figure~\ref{fig_fanP1}, where the
torus-invariant prime divisors $\ds_0$ and $\ds_1$ correspond to
$\con_0 = \R_{\ge 0}$ and $\con_1 = \R_{\le 0}$ with primitive
ray generators $u_{0}=1$ and $u_1=-1$, respectively.

\begin{figure}[htb]
\includegraphics[scale=0.12]{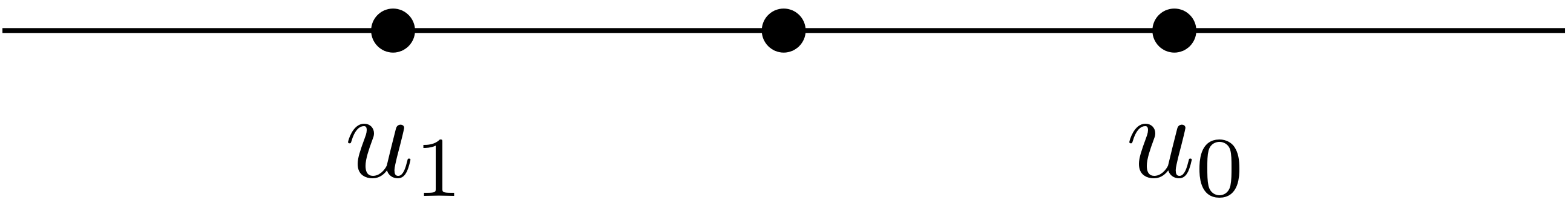}
\caption{The fan $\fan$ of the projective line $\var_\fan=
  \PP^1$} \label{fig_fanP1} 
\end{figure} 

Consider the divisor $\dsl = \ds_0$. Then $\hat\fan$ is a fan in $\R^2$
and its top-dimensional cones are
\begin{align*}
\hat{\con}_0 & = \cone((0,1),(1,-1)) \,, \\
\check{\con}_0 & = \cone((0,-1),(1,-1)) \,,\\
\hat{\con}_1 & = \cone((0,1),(-1,0)) \, \text{ and} \\
\check{\con}_1 & = \cone((0,-1),(-1,0)) \,.
\end{align*}
We obtain the fan of the Hirzebruch surface $\hir_1$ as depicted in Figure~\ref{fig_fanhat}.

\begin{figure}[htb]
\includegraphics[scale=0.15]{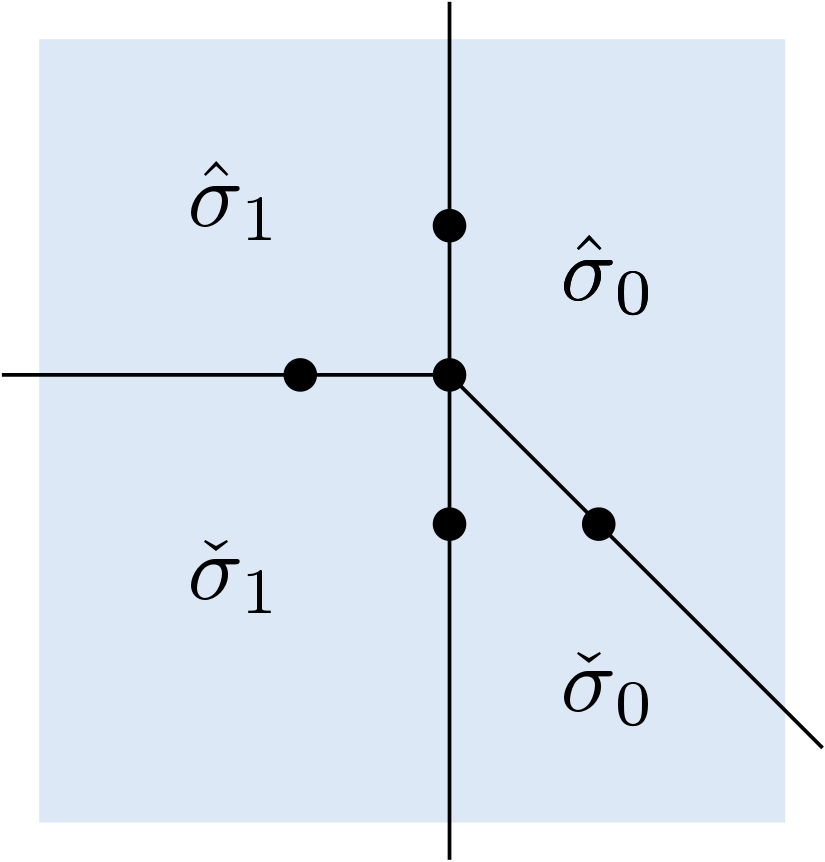}
\caption{The fan $\hat\fan$ of $\PP_{\PP^1}(\she_{\PP^1} \oplus
  \she_{\PP^1}(1))$}\label{fig_fanhat}
\end{figure}

\end{example}

For the suitable divisor $\hat\ds$
on $\hat{\var}$ we fix some rational number $\rat$ such that 
\begin{equation*}
\rat > \sup \{t > 0 \suchthat \ds -t\dsl \text{ is big }\}
\end{equation*}
and define
$\hat{\ds} \coloneqq \pi^*\ds + \rat \var_\infty$.
As an admissible flag
$\hat{\flag}$ we set
\begin{equation*}
\hat{Y}_1 \coloneqq \var_0, \ \hat{Y}_i \coloneqq \iota_0(Y_{i-1}) \text{ for all } i\geq 2.
\end{equation*} 
K\"uronya, Maclean and Ro\'{e} show that $\hat{\var}$, $\hat{Y}_\bullet$ and
$\hat{\ds}$ are the suitable objects to obtain the desired
identification
\begin{equation*}
  \left\{ (m,h) \in \nob{\flag}{\ds} \times \R \suchthat 0 \le h \le
    \fct_{\dsl}(m) \right\} =  
  \nob{\hat{Y}_\bullet}{\hat{\ds}}.
\end{equation*} 

\begin{example}\label{ex_fanhat}
  We continue with Example~\ref{eg:Xhat}. In addition to the data
  $\var = \PP^1$, $\dsl = D_0$ we choose the toric flag $Y_1 = 
  V(\con_0)$ and the big divisor $\ds = 2 D_0$.

  Then the flag $\hat{Y}_\bullet$ consists of $\hat Y_1 = V(\hat\origin)$
  and $\hat Y_2 = \iota_0(Y_1) = V(\hat\con_0)$.
  
  The support function for $\pi^*\ds$ is the pullback of the support
  function for $\ds$ along the linear projection $\hat\fan \to
  \fan$. Its values at the ray generators are indicated in
  Figure~\ref{fig_sp_hir}.
  
\begin{figure}[htb]
  \includegraphics[scale=0.1]{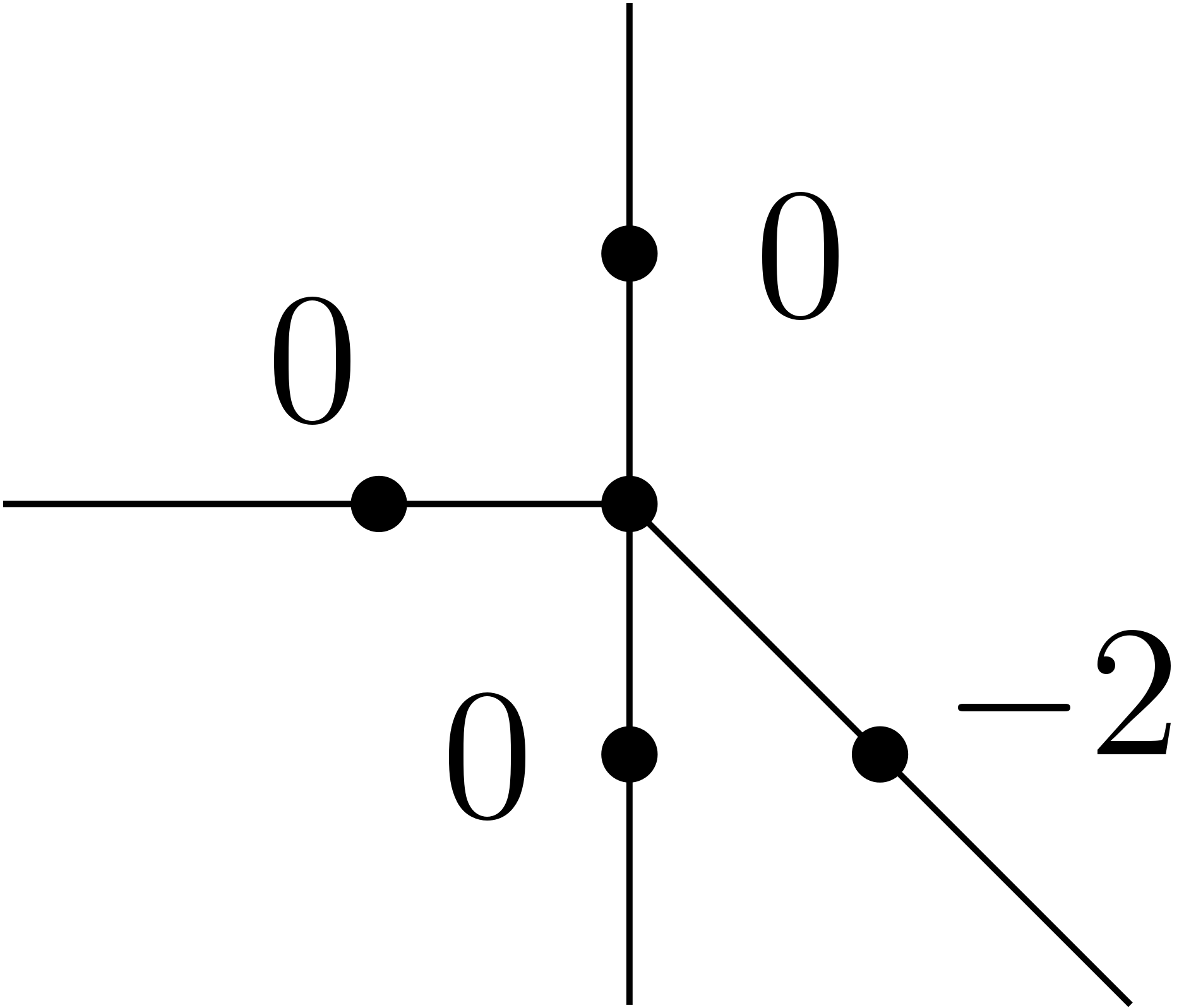}
  \caption{Values of the support function $\spf_{\pi^*\ds}$ for the ray generators $u_\ray$ for $\ray \in \hat\fan(1)$}\label{fig_sp_hir}
\end{figure}

  The resulting polyhedron in $\latm_\R\times\R$ is
  \begin{equation*}
    \left\{ (m,h) \in \latm_\R\times\R \suchthat m \in \nob{\flag}{\ds}
    \ , \ h \le 0 \ , \ h \ge 0 \right\}
  \end{equation*}
  Adding $t \var_\infty = t V(\check\origin)$ to $\pi^*\ds$
  for $t > 0$, relaxes the corresponding inequality $h \le 0$ to $h +
  t \le 0$. The effect on the polyhedron is depicted in
  Figure~\ref{fig_wedge}. 
\begin{figure}[htb]
  \includegraphics[scale=0.27]{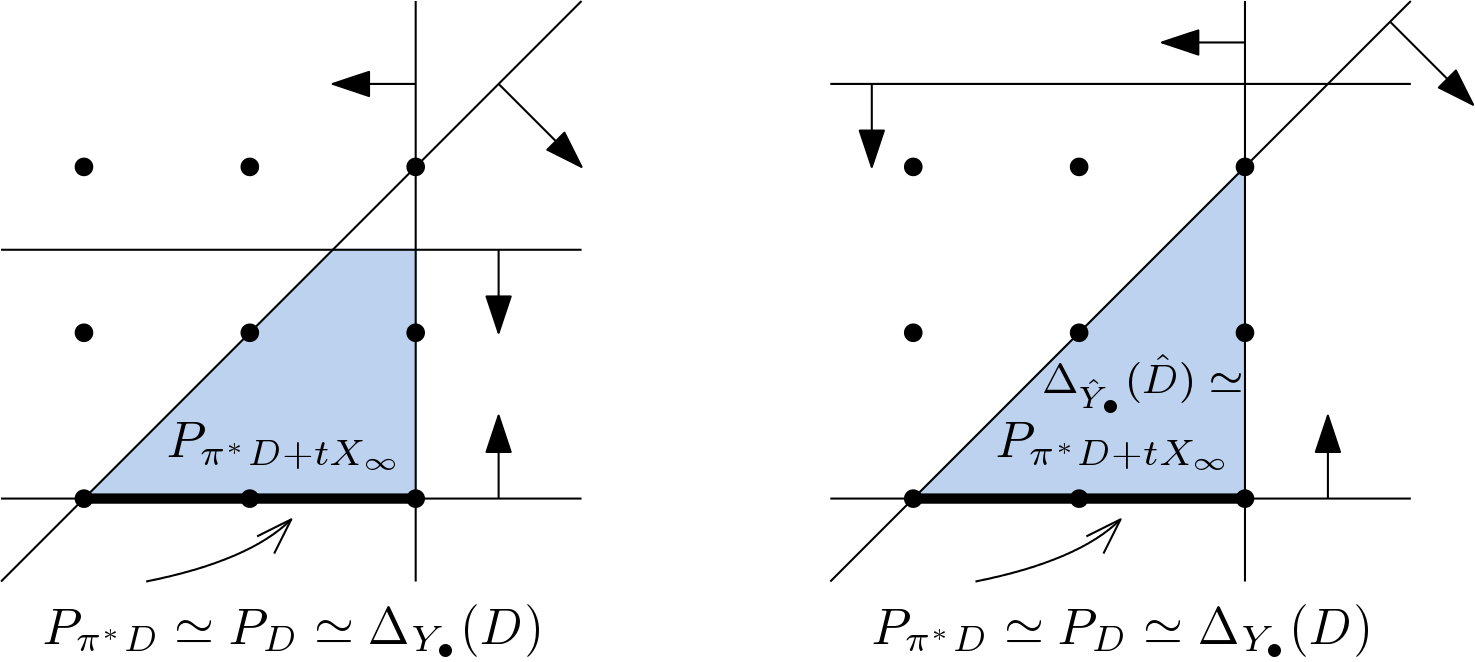}
  \caption{The polytope $\divp{\pi^*\ds +t X_\infty}$ for small $t$ on the
    left and for $t=b$ on the right}\label{fig_wedge}
\end{figure}

\end{example}

\subsection{Geometric Valuation Coming from a General Point}
\label{sec:general-point}
Let $\var$ be a smooth projective toric surface and $\ds$ an ample divisor on $\var$. In this section we relax the requirements in the sense that the function $\fct_R$ now comes from the geometric valuation $\ord_R$ at a general point $R$, not necessarily torus-invariant. Here we can determine the values of $\fct_R$ on parts of $\nob{\flag}{\ds}$ and give an upper bound on the entire Newton--Okounkov body.\\
\\
In order to do so, we need to introduce some more terminology. We are given an admissible torus-invariant flag $\flag \colon \var \supseteq Y_1 \supseteq Y_2$ on $\var$. Since $\flag$ is toric, the Newton--Okounkov body $\nob{\flag}{\ds} \subseteq \R^2$ is isomorphic to $\divp{\ds}$ and one of its facets corresponds to $Y_1$. Let $\linf \in \left(\R^2\right)^*$ denote the defining linear functional that selects this face $Y_1$, when minimized over the polytope $\nob{\flag}{\ds} $. We denote by $F \preceq \nob{\flag}{\ds} $ the face that is selected, when maximizing $\linf$ over $\nob{\flag}{\ds}$. Either this already is a vertex or if not, we maximize $\linf'$ over $F$, where $\linf' \in \left(\R^2\right)^* $ is a linear functional selecting $Y_2$, when minimized over $\nob{\flag}{\ds}$. Denote the resulting vertex in $\ver(\nob{\flag}{\ds})$ by $\vertex_{\flag}$. We say that the vertex $\vertex_{\flag}$ lies at the \emph{opposite side} of the polytope $\nob{\flag}{\ds}$ with respect to the flag $\flag$.

\begin{theorem}\label{prop_a+b}
Let $\var$ be a smooth projective toric surface, $\ds$ an ample
divisor, and $\flag$ an admissible torus-invariant flag on $\var$.
Denote by $\nob{\flag}{\ds}$ the corresponding
Newton--Okounkov body and by $\vertex=\vertex_{\flag}$ the vertex at the opposite
side of $\nob{\flag}{\ds}$ with respect to $\flag$. Moreover let $R\in
\tor$ be a general point. Then for the Newton--Okounkov function
$\fct_R$ coming from the geometric valuation $\ord_R$ we have
\begin{enumerate} 
\item \begin{equation*}
\fct_R(\a,\b) \leq \a+\b
\end{equation*}
for all $(\a,\b) \in \nob{\flag}{\ds} $, where $(\a,\b)$ are the coordinates in the coordinate system associated to $\vertex$. 
\item Furthermore, we have 
\begin{equation*}
\fct_R(\a,\b) = \a+\b
\end{equation*}
for all 
\begin{equation*}
(\a,\b) \in \{ (\a',\b') \in \nob{\flag}{\ds}  \suchthat \np((x-1)^{\a'}(y-1)^{\b'}) \subseteq \nob{\flag}{\ds} \}.
\end{equation*}

\end{enumerate}
\end{theorem}

\begin{proof}
\begin{enumerate}
\item Let $(\a,\b) \in\nob{\flag}{\ds}$ be a valuative point in the Newton--Okounkov body. We want to determine $\fct_\gen(\a,\b)$, where $\fct_R$ is the function coming from the geometric valuation $\ord_R$. Consider an arbitrary section $s \in \sps(\var,\she_{\var}(k\ds))$ that is mapped to $(\a,\b)=\frac{1}{k}\val_{\flag}(s)$ for some $k \in \N$. Let $u,u' \in \left(\R^2\right)^*$ be as above. Then, by construction, the rescaled exponent vectors of all monomials that can occur in $s$ have to be an element of the set
\begin{eqnarray*}
\hyp^+ \coloneqq \left\{ m  \in \nob{\flag}{\ds} \right. & \suchthat &\linf(m) > \linf(\a,\b) \text{ or }  \\
& & \left. ( \linf(m) = \linf(\a,\b) \text{ and } \linf'(m) \geq \linf'(\a,\b))\right\}.
\end{eqnarray*}

As indicated in Figure~\ref{figure_region}, this region is obtained by intersecting $\nob{\flag}{\ds}$ with the positive halfspace associated to the hyperplane $\hyp=\{m \suchthat \linf(m)=\linf(\a,\b)\}$. 

\begin{figure}[h!]
\begin{center}
    \includegraphics[scale=0.2]{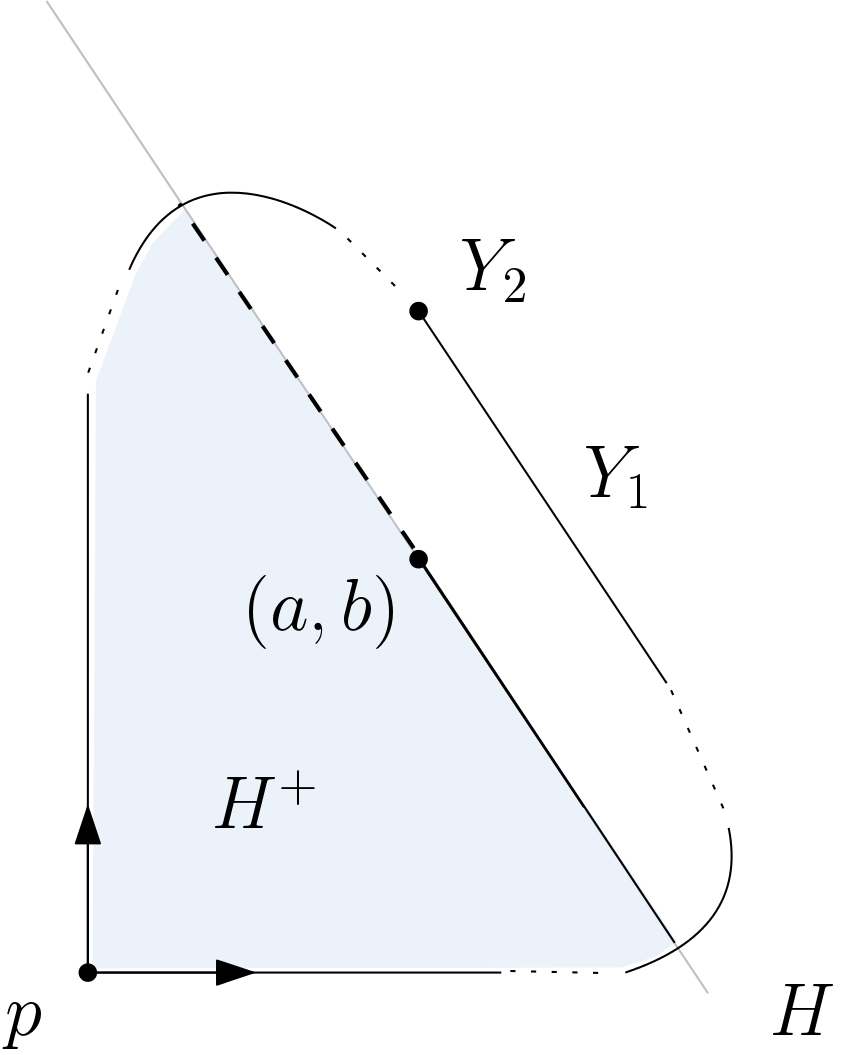}
\caption{Admissible region $\hyp^+$ of rescaled exponent vectors associated to monomials of $s$ inside the Newton--Okounkov body $\nob{\flag}{\ds}$.}\label{figure_region}
\end{center}
\end{figure}

Moreover, we can assume, without loss of generality that the general point $\gen$ is given as $\gen=(1,1)$. To determine the order of vanishing of $s$ at $R$ we substitute $x$ by $x'+1$ and $y$ by $y'+1$ and bound the order of vanishing of ${s'(x',y')=s(x'+1,y'+1)}$ at $(0,0)$. Assuming, without loss of generality that the monomial $x^{k\a} y^{k\b}$ itself occurs in $s$ with coefficient $1$, multiplying out gives
\begin{eqnarray*}
s'(x',y')&=& s(x'+1,y'+1)=(x'+1)^{k\a}(y'+1)^{k\b}+ \ *** \\
			&=& (x')^{k\a}(y')^{k\b}+ \text{ lower order terms } + \ ***.
\end{eqnarray*}
\textsc{Claim}: The monomial $(x')^{k\a}(y')^{k\b}$ cannot be canceled out by terms coming from $***$.

Aiming at a contradiction, assume that \mbox{$***$} contains a monomial ${(x'+1)^{kc}(y'+1)^{kd}}$ for some ${kc},{kd} \in \N$ that produces $(x')^{k\a}(y')^{k\b}$ when multiplied out. Observe that  multiplying out ${(x'+1)^{kc}(y'+1)^{kd}}$ produces all monomials in $${\left\{ (x')^e(y')^f \suchthat e \leq {kc} \text{ and } f \leq {kd} \right\}}.$$ Thus ${kc} \geq {k\a}$ and ${kd} \geq {k\b}$. In addition, as an exponent vector of a monomial in $s$, the point $(c,d)$ is required to be an element of the set $\hyp^+$, which  forces the hyperplane $\hyp$ to have positive slope as indicated in Figure~\ref{fig_slope}. 

\begin{figure}[h!]
\begin{center}
\includegraphics[scale=0.21]{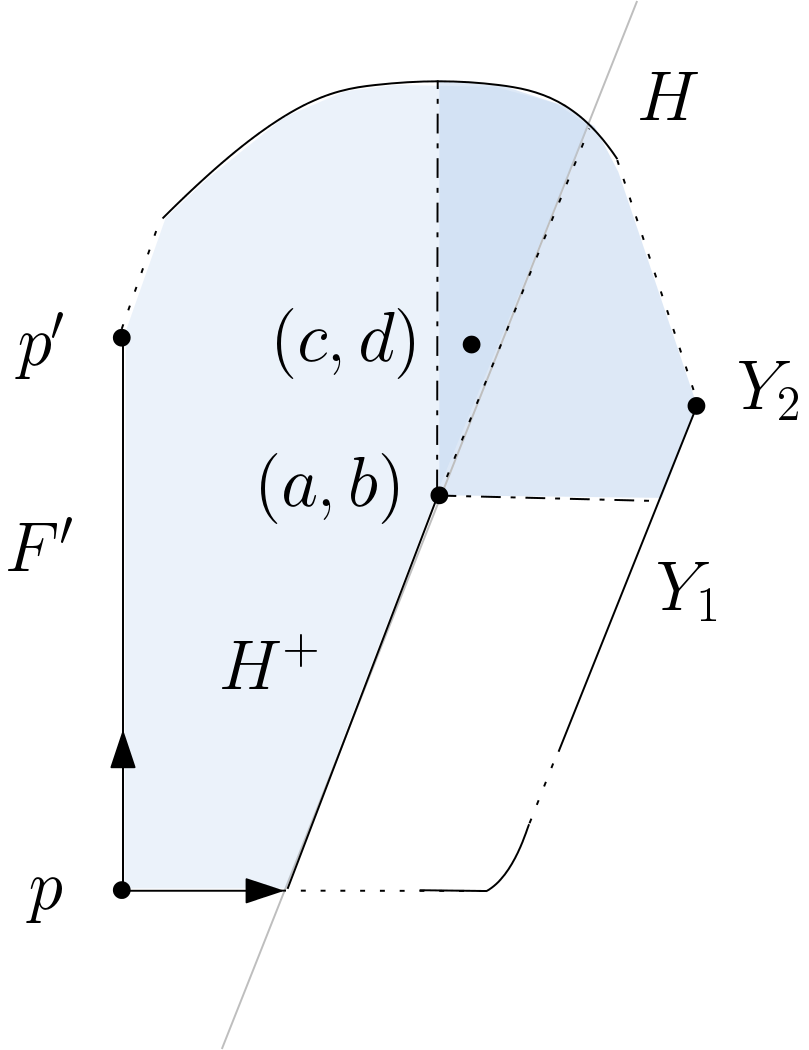}
\caption{A non-empty region of points $(c,d)\in \hyp^+$ that satisfy $c\geq a$ and $d \geq b$ forcing $\hyp$ to have positive slope.}\label{fig_slope}    
\end{center}
\end{figure} 

Let $F' \preceq \nob{\flag}{\ds}$ denote the face that corresponds to $\{x=0\}$ and let $\vertex'$ denote its second vertex. Then $\linf(\vertex')  > \linf( \vertex )$ which contradicts the fact that $\linf$ is maximized at $\vertex$ over $\nob{\flag}{\ds}$. Thus such a monomial $(x'+1)^{kc}(y'+1)^{kd}$ cannot exist and $(x')^{k\a}(y')^{k\b}$ does not cancel out.  \\  
\\
Consequently, $k(\a+\b)$ is an upper bound for the order of vanishing of $s'$ at $(0,0)$ and thus for $s$ at $R$. Since this is true for all sections $s$ that get mapped to $(\a,\b)$, this yields $\fct_R(\a,\b) \leq \a+\b$.\\

\item Consider a point 
\begin{equation*}
(\a,\b) \in \text{Par} \coloneqq \{ (\a',\b') \in \nob{\flag}{\ds} \suchthat \np((x-1)^{\a'}(y-1)^{\b'}) \subseteq \nob{\flag}{\ds} \}\ ,
 \end{equation*}
and set $s(x,y)= (x-1)^{k\a}(y-1)^{k\b}$, for a $k \in \N$ such that $s$ is a global section of $k\ds$, whose Newton polytope can be seen in Figure~\ref{fig_lexmax}. Then by construction, $s$ is a section associated to the point $(\a,\b)$ and its Newton polytope fits inside $k\nob{\flag}{\ds}$. We have $\ord_\gen(s)=k(\a+\b)$ which gives the lower bound $\fct_\gen(a,b) \geq \frac{1}{k}\ord_\gen(s)$. Combined with 1. we obtain $\fct_R(\a,\b)=\a+\b$.  

\begin{figure}[h!]
\begin{center}
\includegraphics[scale=0.17]{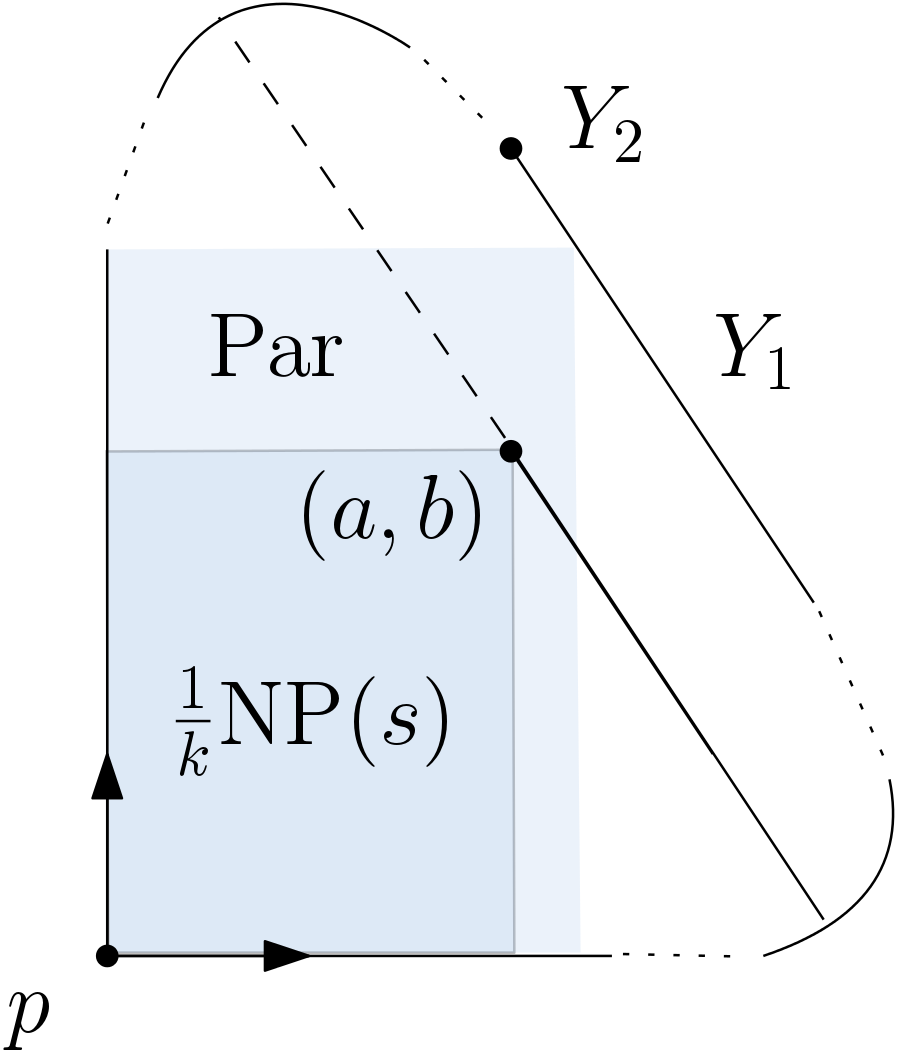}
\caption{The scaled Newton polytope $\frac{1}{k}\np(s)$ of the section $s(x,y) =(x-1)^{k\a}(y-1)^{k\b}$.}\label{fig_lexmax}    
\end{center}
\end{figure}   

\end{enumerate}
\end{proof}

We illustrate the use of Theorem~\ref{prop_a+b} by  the following example. 

\begin{example}\label{ex_par}

We continue our running example of the Hirzebruch surface $\var=\hir_1$ and the ample divisor $\ds= \ds_3 +2\ds_4$ as in Example~\ref{ex_hir1}. Furthermore, fix the torus-invariant flag $\flag \colon \var \supseteq Y_1 \supseteq Y_2$, where $Y_1=\ds_1$ and $Y_2=\ds_1 \cap \ds_2$. Then the vertex $\vertex=\vertex_{\flag}$ of the Newton--Okounkov body $\nob{\flag}{\ds} \cong \divp{\ds}$ that lies at the opposite side of the polytope $\divp{\ds}$ with respect to the flag $\flag$ is the  one indicated in Figure~\ref{fig_lexmax_hir1}. The associated coordinate system specifies coordinates $a,b$ for the plane $\R^2$ and local toric coordinates $x,y$. 

\begin{figure}[h!]
\begin{center}
\includegraphics[scale=0.1]{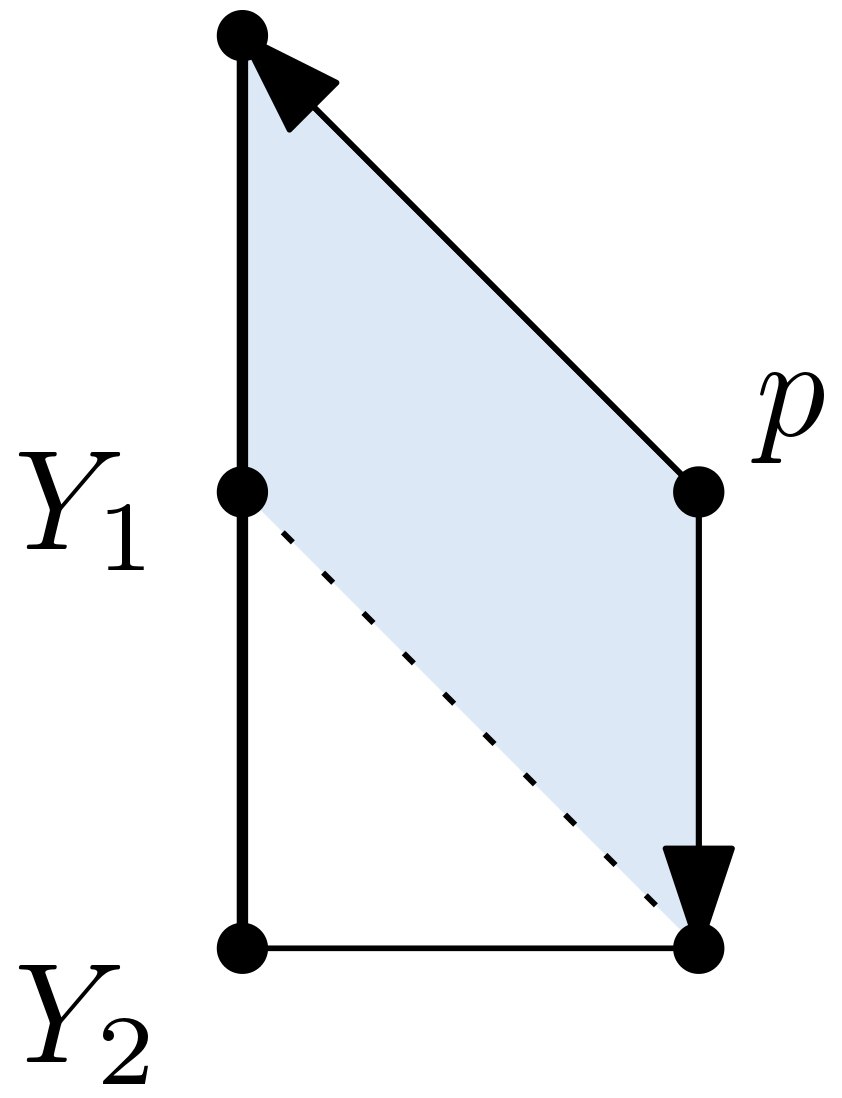}
\caption{The coordinate system associated to the vertex $\vertex$ which lies at the opposite side of $\divp{\ds}$ with respect to the flag $\flag$.}\label{fig_lexmax_hir1}    
\end{center}
\end{figure}   

We want to determine the Newton--Okounkov function coming from the
geometric valuation at the point $\gen=(1,1)$. Theorem~\ref{prop_a+b}
yields the upper bound ${\fct_\gen(\a,\b) \leq \a+\b}$ on the entire
Newton--Okounkov body $\nob{\flag}{\ds}$, and ${\fct_\gen(\a,\b)= \a+\b}$
for $(\a,\b)$ satisfying ${\a,\b \leq 1}$, as indicated by the shaded
region in Figure~\ref{fig_lexmax_hir1}. It will turn out in
Example~\ref{ex_1} that there exist points ${(\a,\b)\in
\nob{\flag}{\ds}}$ for which we have ${\fct_R(\a,\b)<\a+\b}$.
\end{example}

For a particularly nice class of polygons, Theorem~\ref{prop_a+b}
alone is enough to determine the function $\fct_R$. A polytope
$P \subseteq \R^n_{\ge0}$ is called anti-blocking if ${P = (P + \R^n_{\le0}) \cap \R^n_{\ge0}}$
(compare~\cite{FulkersonBlockingAntiBlocking,FulkersonAntiBlocking}).
Observe that this coordinate dependent property implies (and for $n=2$
is equivalent to) the fact that the parallelepiped spanned by the
edges at the origin covers $P$.

\begin{corollary}\label{cor_par}
Let $\var, \flag, \ds$ and $R$ be as in
Theorem~\ref{prop_a+b}. Suppose $\nob{\flag}{\ds}$ is anti-blocking.
Let $\flag'$ be a torus-invariant flag opposite to the origin. Then
the Newton--Okounkov function $\fct_R$ on $\nob{\flag'}{\ds}$
is given by
\begin{equation*}
\fct_R(\a,\b) = \a+\b
\end{equation*}
on the entire Newton--Okounkov body $\nob{\flag'}{\ds} \cong \divp{\ds}$ in
the coordinate system associated to $\flag$.
\end{corollary}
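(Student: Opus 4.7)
The plan is to apply Proposition~\ref{prop_a+b}(2) to the flag $\flag'$ and observe that the anti-blocking hypothesis forces the equality region $\operatorname{Par}$ to cover the entire Newton--Okounkov body.

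First I align the coordinate systems. The anti-blocking hypothesis on $\nob{\flag}{\ds}$ is with respect to the $\flag$-embedding, where the base vertex $\vertex_{\con_\flag}$ sits at the origin of $\R^2$, the polytope lies in $\R^2_{\ge 0}$, and the basis of $\latm_\R$ is given by the edges emanating from that vertex. The assumption that $\flag'$ is opposite to the origin means precisely that the vertex $Q_{\flag'}$ of Proposition~\ref{prop_a+b} coincides with $\vertex_{\con_\flag}$. Consequently the coordinate system associated to $Q_{\flag'}$ agrees with the $\flag$-coordinate system, and in these coordinates $\nob{\flag'}{\ds}$ sits in $\R^2_{\ge 0}$ as the same anti-blocking polytope as $\nob{\flag}{\ds}$.

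Next I apply Proposition~\ref{prop_a+b}(2) to the flag $\flag'$. The Newton polytope of $(x-1)^{a'}(y-1)^{b'}$ is the axis-aligned rectangle $[0,a'] \times [0,b']$, so the containment
\[
\np((x-1)^{a'}(y-1)^{b'}) \subseteq \nob{\flag'}{\ds}
\]
is simply the statement that this box is contained in the polytope. By the defining property of anti-blocking polytopes, every $(a',b') \in \nob{\flag'}{\ds}$ satisfies exactly this condition, so the set $\operatorname{Par}'$ appearing in Proposition~\ref{prop_a+b}(2) is all of $\nob{\flag'}{\ds}$. Hence $\fct_R(a,b) = a + b$ on the entire Newton--Okounkov body.

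The argument is therefore essentially a single observation after the coordinate identification, and there is no serious obstacle. The only point that requires care is the first step: confirming that the assumption ``opposite to the origin'' makes the $\flag$-coordinate system (in which the anti-blocking hypothesis is phrased) coincide with the coordinate system associated to $Q_{\flag'}$ (in which Proposition~\ref{prop_a+b}(2) is formulated), so that the anti-blocking condition translates directly into the containment condition defining $\operatorname{Par}'$.
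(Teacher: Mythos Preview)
Your proposal is correct and matches the paper's intended argument: the corollary is stated without a separate proof in the paper, as it follows immediately from Proposition~\ref{prop_a+b}(2) once one observes that the anti-blocking condition $\nob{\flag}{\ds} = (\nob{\flag}{\ds} + \R^2_{\le 0}) \cap \R^2_{\ge 0}$ forces $[0,a]\times[0,b] \subseteq \nob{\flag}{\ds}$ for every $(a,b)$ in the body, so that $\operatorname{Par}$ is everything. Your careful verification that the coordinate system associated to $Q_{\flag'}$ agrees with the $\flag$-coordinates is exactly the point that makes the corollary go through.
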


Using the tools from Section~\ref{sec:seshadri},
Corollary~\ref{cor_par} implies that the Seshadri constant of $\ds$ at
$R$ is rational. This can also be seen from Sano's
Theorem~\cite{sano2014seshadri} as $\dim |-K_X| \ge 3$ in the
anti-blocking case.\\
\\
If we are not in the lucky situation of Corollary~\ref{cor_par}, then
things are getting more complicated and more interesting. We give an
approach that works in numerous cases.

\fbox{\textsc{The general strategy}}\\

For the remainder of the paper we will consider the following situation.\\
\\
\begin{notation}\label{notation}
$ $\\
\\
\begin{tabular}{ l p{.8\linewidth} }
$\var$ & a smooth projective toric surface, \\
$\ds$ & an ample torus-invariant divisor on $\var$, \\
$\flag$ & an admissible torus-invariant flag, \\
$\dir$ & the primitive direction of the edge of $\divp{\ds} \cong \nob{\flag}{\ds}$
         corresponding to $Y_1$, towards the vertex corresponding to
         $Y_2$, \\
$\linf$ & the primitive ray generator corresponding to $Y_1$, \\
$\cur$ & the curve in $\var$ given by the binomial $x^\dir-1$, \\
$R$ & a general point on $\cur$, \\
$\flag'$ & the admissible flag $\var \supseteq \cur \supseteq
           \{\gen\}$, \\
$\pwl$ & \mbox{the  piecewise linear,
volume-preserving isomorphism} ${\nob{\flag}{\ds} \to \nob{\flag'}{\ds}}$
         from Corollary~\ref{cor_mv}, \\
$\fct_\gen$ & the function $\nob{\flag}{\ds} \to \R$ coming from the
              geometric valuation $\ord_R$, \\
$\fct'_\gen$ & the function $\nob{\flag'}{\ds} \to \R$ coming from the
              geometric valuation $\ord_R$.
\end{tabular}

\end{notation}
$ $ \\
\\
\textsc{Goal:}
Determine the function 
\begin{equation*}
\fct_R \colon \nob{\flag}{\ds} \cong \divp{\ds} \to \R.
\end{equation*}

\textsc{Approach:}

\begin{enumerate}
\item\label{enum_step1} For each valuative point $(\a,\b) \in
  \nob{\flag}{\ds}$ `guess' a Newton polytope $\frac{1}{k}\np(s)
  \subseteq \divp{\ds} $ of a global section $s \in
  \sps(\var,\she_\var(k\ds))$ for some $k \in \N$ to maximize the
  order of vanishing $\ord_R(s)$ according to the following rules:
  \begin{itemize}
  \item The section $s$ has to correspond to the point $(\a,\b)$.
  \item Choose a Newton polytope $\np(s)$ that is a zonotope whose
    edge directions  all come from edges in $\divp{\ds}$. 
  \item Try to maximize the perimeter of the Newton polytope $\frac{1}{k}\np(s)$
    among the above.
  \end{itemize}
\item Determine the values of the function $\fct \colon
  \nob{\flag}{\ds} \to \R$ that takes $\frac{1}{k}\ord_R(s)$ as a
  value with respect to the chosen sections $s$ for a point $(\a,\b)
  \in \nob{\flag}{\ds}$ and compute the integral
  $\int_{\nob{\flag}{\ds}}{\fct}$ .
\item Compute the Newton--Okounkov body $\nob{\flag'}{\ds}$ with
  respect to the new flag $\flag'$ using  variation of Zariski
  decomposition or the combinatorial methods from
  Section~\ref{sec_nob_toric}.
\item Compute the integral $\int_{\nob{\flag'}{\ds}}{\fct'}$, where we
  assume the function to be given by
  \begin{eqnarray*}
    \fct' \colon {\nob{\flag'}{\ds}} &\to& \R \\
    (\a',\b') & \mapsto& \a'+\b'.
  \end{eqnarray*}	 
\item Compare the value of the integrals
  $\int_{\nob{\flag}{\ds}}{\fct}$ and
  $\int_{\nob{\flag'}{\ds}}{\fct'}$. It holds that
  \begin{equation}\label{eqn_ints}
    \int_{\nob{\flag}{\ds}}{\fct} \leq
    \int_{\nob{\flag}{\ds}}{\fct_R}\quad =
    \int_{\nob{\flag'}{\ds}}{\fct'_R}
    \leq\int_{\nob{\flag'}{\ds}}{\fct'}.
  \end{equation}
  \begin{itemize}
  \item If
    $\int_{\nob{\flag}{\ds}}{\fct}=\int_{\nob{\flag'}{\ds}}{\fct'}$,
    then we have equality in (\ref{eqn_ints}) and therefore a
    certificate that the choices that we have made were valid and we are
    done.
  \item If
    $\int_{\nob{\flag}{\ds}}{\fct}<\int_{\nob{\flag'}{\ds}}{\fct'}$,
    then either we have chosen sections with non-maximal orders of
    vanishing at $\gen$ in step~\ref{enum_step1} or for the chosen
    vector $v$ 
    the function $\fct'_\gen$ takes values smaller than $\a'+\b'$ somewhere on $\nob{\flag'}{\ds}$.
\end{itemize}	 
\end{enumerate}

\begin{example}\label{ex_1}

We return to Example~\ref{ex_par}, and again consider the Hirzebruch surface  \linebreak $\var=\hir_1$ equipped with the torus-invariant flag $\flag \colon \var \supseteq Y_1 \supseteq Y_2$, where $Y_1=\ds_1$ and $Y_2=\ds_1 \cap \ds_2$ and the ample divisor $\ds=\ds_3+2\ds_4$ on $\var$.
We want to determine the values of a function on $\nob{\flag}{\ds}$ coming from a geometric valuation at a general point, so let $R=(1,1) \in \var$ in local coordinates. More precisely, for the rational points in $\nob{\flag}{\ds}$ in the coordinate system associated to the flag $\flag$ we study  
\begin{eqnarray*}
 \fct_R \colon \nob{\flag}{\ds} &\to & \R \\
  (\a,\b) &\mapsto & \lim_{k \to \infty} \frac{1}{k} \sup \left\{ t \in \R \suchthat  \text{ there exists } s \in \sps(\var,\she_\var(k\ds)) \suchthat\right. \\
  & & \left.\val_{\flag}(s)=k(\a,\b), \ord_R(s) \geq t  \right\}.
\end{eqnarray*}
We claim that $\fct_R$ coincides with the function $\fct$ given by 
\begin{equation*}
\fct(\a,\b)= \begin{cases}
2-\a & \text{ if } 0 \leq \a+\b \leq 1 \\
3-2a-\b& \text{ if } 1 \leq \a+\b \leq 2
\end{cases}
\end{equation*}
at a point $(\a,\b) \in \nob{\flag}{\ds} $.

To verify this claim, we will give explicit respective sections and argue that the maximal value of $\ord_R$ is achieved for these particular sections. We treat the two cases individually. 

\begin{figure}[h!]
\begin{center}
     \includegraphics[scale=0.16]{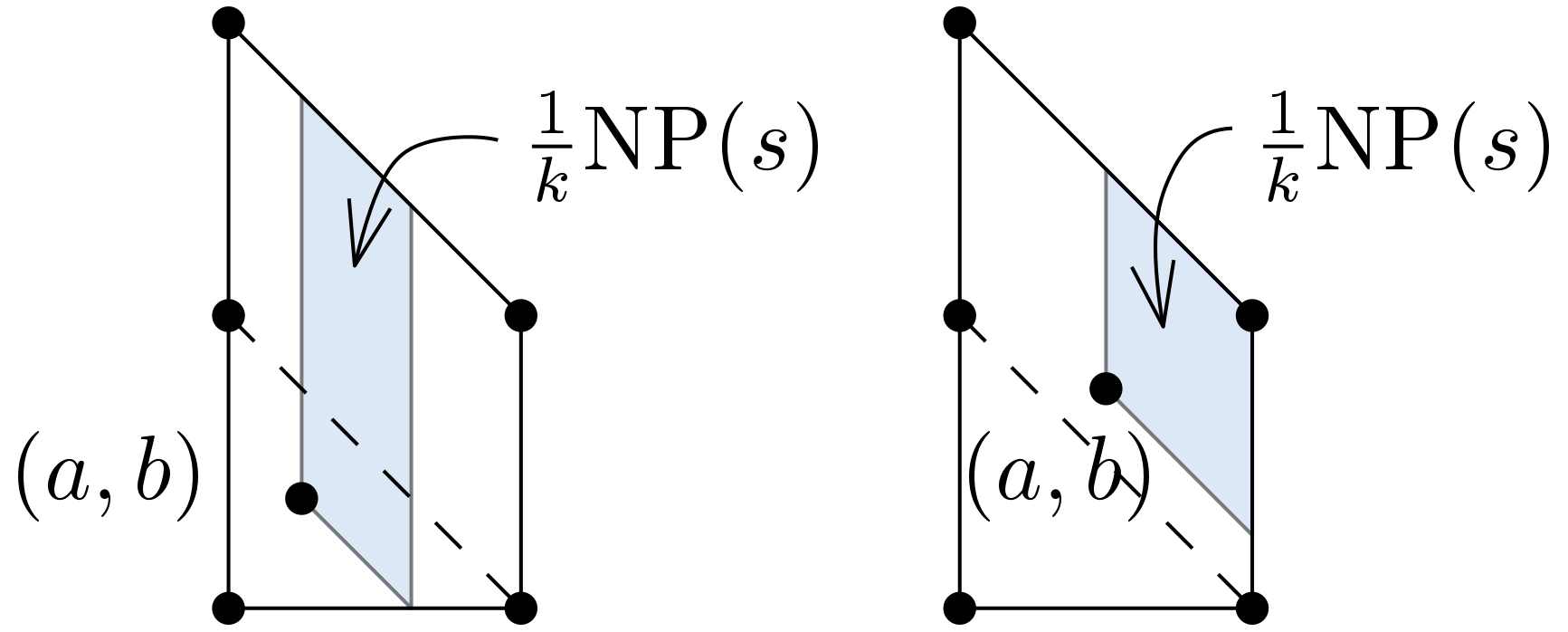}
\caption{Newton polytopes $\frac{1}{k}\new(s)$ of the respective sections $s$.}\label{newtonpoly}
\end{center}
\end{figure}

\pagebreak
\fbox{$0 \leq \a+\b \leq 1 $:}\\
\\
Set 
\begin{equation*}
s(x,y)=(x^\a(y-1)^{2-\a-\b}(x-y)^{\b})^k
\end{equation*}
in local coordinates $x,y$ for suitable $k \in \N$. The corresponding Newton polytope $\frac{1}{k}\new(s)$ is depicted in Figure~\ref{newtonpoly}. Since the leftmost part of it has coordinates $(\a,\cdot)$, we have $\ord_{Y_1}(s)=k\a$. If we restrict to the line $(\a,\cdot)$, then the lowest point of the Newton polytope is $(\a,\b)$ and thus $\ord_{Y_2}(s_1)=k\b$. Together with the fact that the Newton polytope $\frac{1}{k}\np(s)$ fits inside the Newton--Okounkov body $\nob{\flag}{\ds}$, this guarantees that the section $s$ is actually mapped to the point $(\a,\b)$ when computing $\nob{\flag}{\ds}$. \\
\\
For the order of vanishing of interest we obtain
\begin{equation*}
\ord_R(s)= k((2-\a-\b)+\b)=k(2-\a)\ .
\end{equation*}
\\
\fbox{$1 \leq \a+\b \leq 2$:}\\
\\
Set 
\begin{equation*}
s(x,y)=(x^ay^{\a+\b-1}(y-1)^{2-\a-\b}(x-y)^{1-\a})^k.
\end{equation*}
That all the requirements are fulfilled by $s$ follows by using the same arguments as in the previous case. For the order of vanishing of interest we obtain
\begin{equation*}
\ord_R(s)= k((2-\a-\b)+(1-\a))=k(3-2\a-\b)\ .
\end{equation*}

The values of the resulting piecewise linear function are depicted in Figure~\ref{fig_valueslam}.

\begin{figure}[h!]
\begin{center}
\includegraphics[scale=0.11]{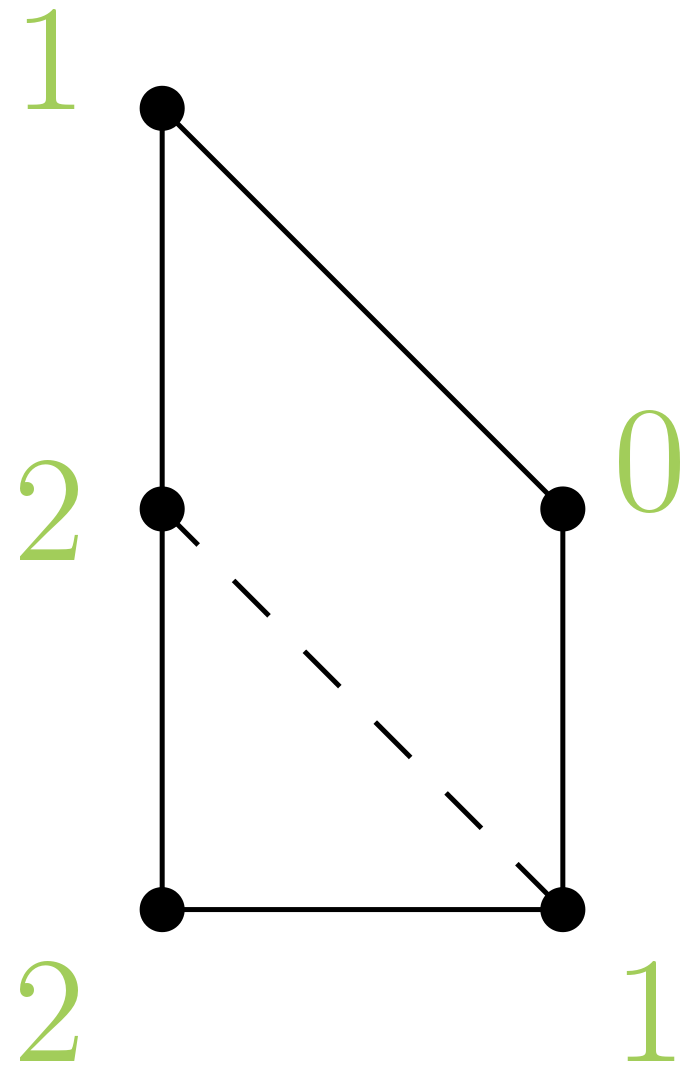}
\caption{The values of the function $\fct$ on $\nob{\flag}{\ds}$.}\label{fig_valueslam}    
\end{center}
\end{figure}

If we integrate $\fct$ over $\nob{\flag}{\ds}$, we obtain  

\begin{equation*}
\int_{\nob{\flag}{\ds}}{\fct}=\frac{11}{6}.
\end{equation*}

Now it remains to show that these values are actually the maximal ones that can be realized. In order to do this, we make use of the fact that the integral of our function $\fct$ over the Newton--Okounkov body $\nob{\flag}{\ds}$ is independent of the flag $\flag$.  \\
\\
We keep the underlying variety $\var$ and the ample divisor $\ds$. Choose a new admissible flag ${\flag' \colon \var \supseteq Y_1' \supseteq Y_2'}$, where $Y_1'$ is the curve defined by the local equation ${y^{-1}-1=0}$ and ${Y_2'=R=(1,1)}$ is the point of the geometric valuation. Since this flag is no longer torus-invariant, the corresponding Newton--Okounkov body $\nob{\flag'}{\ds}$ will differ from the polytope $\divp{\ds}$. As shown in Example~\ref{ex_hir1}, we obtain the new Newton--Okounkov body $\nob{\flag'}{\ds}$ depicted in Figure~\ref{fig_nob_hir1}.  \\
\\
For the function $\fct'_R$ on $ \nob{\flag'}{\ds}$, we are still working with the geometric valuation associated to $\ord_R$. 
Thus, set
\begin{eqnarray*}
\fct' \colon \nob{\flag'}{\ds} &\to& \R \\
(\a',\b') &\mapsto&  \a'+\b'.
\end{eqnarray*} 

\begin{figure}[h!]
\begin{center}
    \includegraphics[scale=0.07]{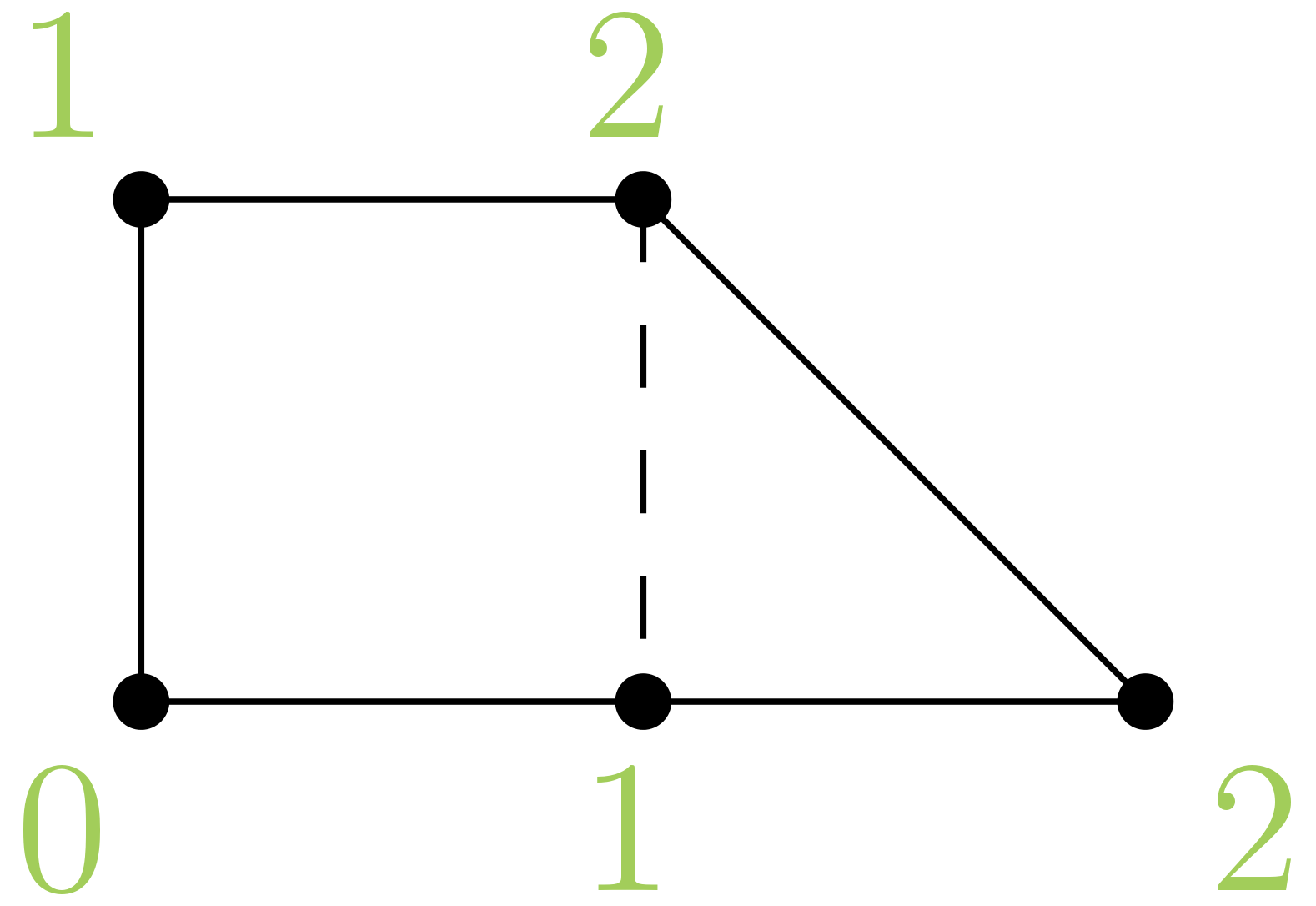}
\caption{The values of the function $\fct'$ on $\nob{\flag'}{\ds}$.}\label{fig_valueslam2}
\end{center}
\end{figure}

The values of $\fct'$ are depicted in Figure~\ref{fig_valueslam2}. If we integrate $\fct'$ over $\nob{\flag'}{\ds}$, we obtain      

\begin{equation*}
\int_{\nob{\flag'}{\ds}}{\fct'}=\frac{11}{6}.
\end{equation*}

Overall, we have $\int_{\nob{\flag}{\ds}}{\fct }= \int_{\nob{\flag'}{\ds}}{\fct'}$. This shows that our choice for the section $s$ was indeed maximal with respect to $\ord_R(s)$ and thus determines the value of $\fct_R=\fct$. 
\end{example}

\begin{remark}\label{rem:equidecomposable}
In the previous example the integrals $\int_{\nob{\flag}{\ds}}{\fct}$ and $\int_{\nob{\flag'}{\ds}}{ \fct'}$ coincide. Observe that even more is true. Let 
\begin{equation*}
G(\fct)=\{(\a,\b,\fct(\a,\b)) \suchthat (\a,\b)\in \nob{\flag}{\ds}\}
\end{equation*}
denote the graph of $\fct$. Since $\fct$ is a concave and piecewise linear function, the set 
\begin{equation*}
\nob{\flag}{\ds}_{\fct} \coloneqq \conv \left( ( \nob{\flag}{\ds}\times \{0\}) \cup G(\fct) \right) \subseteq \R^3 
\end{equation*}
is a $3$-dimensional polytope. If we compare $\nob{\flag}{\ds}_{\fct}$ and $\nob{\flag'}{\ds}_{\fct'}$, it turns out that they are $\text{SL}_3(\Q)$-equidecomposable, where the respective maps are volume-preserving.  

To see this, we give the explicit maps, where $\psi_1$ and $\psi_2$ come from the piecewise linear pieces of $\Psi$ on the respective domains of linearity. Use 
\begin{align*}
\psi_1\colon \R^3& \to \R^3 \\
\begin{pmatrix}
\a \\ \b\\c\\ 
\end{pmatrix} & \mapsto
\begin{pmatrix}
-1 & -1 & 0 \\
-1 & 0 & 0 \\
0 & 0 & 1\\
\end{pmatrix}
\cdot \begin{pmatrix}
\a \\ \b \\ c\\
\end{pmatrix}
+ \begin{pmatrix}
2\\1\\0\\
\end{pmatrix} \\
\end{align*}
to map the parallelogram in $\nob{\flag}{\ds}$ with its corresponding heights, and use 

\begin{align*}
\psi_2\colon \R^3& \to \R^3 \\
\begin{pmatrix}
\a \\ \b\\c\\ 
\end{pmatrix} & \mapsto
\begin{pmatrix}
-1 & -1 & 0 \\
0 & 1 & 0 \\
 0&0&1 \\
\end{pmatrix}
\cdot \begin{pmatrix}
\a \\ \b \\ c\\
\end{pmatrix}
+ \begin{pmatrix}
2 \\0\\0\\
\end{pmatrix} \\
\end{align*}
for mapping the triangle in $\nob{\flag }{\ds}$ with its corresponding heights. This is illustrated in Figure~\ref{equi}, where the respective heights are given in green. 

\begin{figure}[h!]
\begin{center}
\includegraphics[scale=0.28]{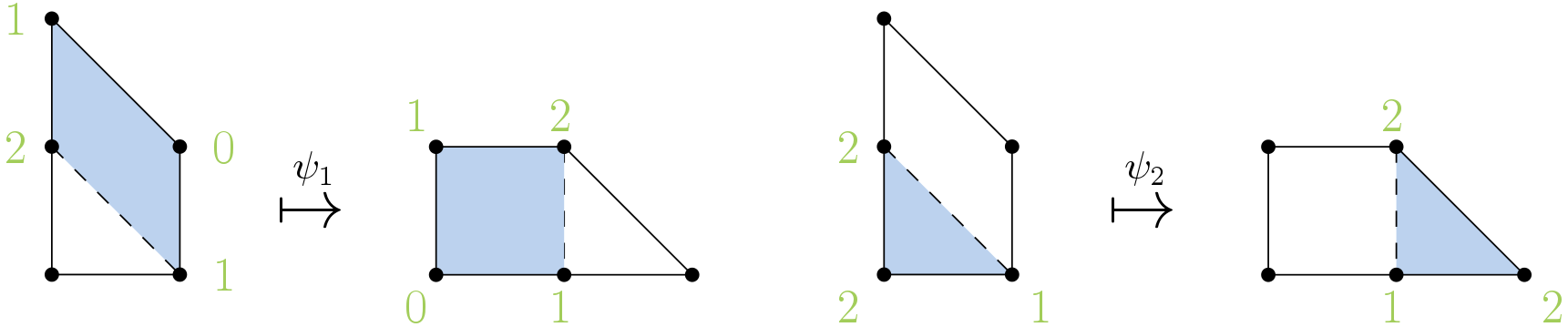}
\caption{$\text{SL}_3(\Q)$-equidecomposable pieces of the Newton--Okounkov bodies $\nob{\flag}{\ds}$ on the left and $\nob{\flag'}{\ds}$ on the right.}\label{equi}    
\end{center}
\end{figure}

\end{remark}
      
We conjecture that this in not a coincidence but holds in our general set-up.
       
\begin{conjecture}\label{conj:pl-transform}
In the situation of our general set-up~\ref{notation},
  $\fct_\gen = \fct'_\gen \circ \pwl$.
\end{conjecture}

The approach for determining the Newton--Okounkov function via $\pwl$
applies to a certain class of polytopes. To describe this class we
need to introduce the following term.

\begin{definition}\label{def_zonotopally}
Let $\pol \subseteq \R^2$ be a polygon.
Let $\dir \in \Z^2$ be a primitive vector, and $\linf \in
\dir^\perp$ a primitive integral functional.

We call $\pol$ \emph{zonotopally well-covered with respect to
  $(\dir,\linf)$} if for all points $\op \in \pol$ the set
$\feasible{\pol}{\dir}{\op}$ contains a zonotope
$\ls_1+\ldots+\ls_\ell$ with none of the $\ls_i$ parallel to $\dir$,
such that 
\begin{equation*}
\sum_{i=1}^{\ell}{\llen_{\Z^2}(\ls_i)} = \wid{\linf}(\feasible{\pol}{\dir}{\op}) \,.
\end{equation*}
The polygon $\pol$ is \emph{zonotopally well-covered} if it is so with
respect to some $(\dir,\linf)$. 
\end{definition}

In fact, it is enough to check the condition for the finitely many
vertices of domains of linearity of $\pwl$.

\begin{theorem}\label{thm_fct_zonotopally}
In the situation of our general set-up~\ref{notation},
if the polytope $\nob{\flag}{\ds}$ is zonotopally well-covered with
respect to $(\dir,\linf)$, then
${\fct_\gen = \fct'_\gen \circ \Psi}$
and ${\fct'_\gen(a',b') = a'+b'}$.
\end{theorem}

In particular, Conjecture~\ref{conj:pl-transform} holds in this case.

\begin{proof}
According to our general strategy, it is sufficient to certify, for
every valuative point $\op \in \nob{\flag}{\ds} \cap \Q^2$, the
existence of a section $s \in \sps(\var,\she_\var(k\ds))$ for some ${k
\in \N}$ with ${\frac1k\val_{\flag}(s)=\op}$ and with order of vanishing
${\ord_R(s) = k(\a'+\b')}$ where ${\pwl(\op) \eqqcolon (\a',\b')}$.

To this end, let $\ls_1+\ldots+\ls_\ell$ be the zonotope inside 
$\feasible{\pol}{\dir}{\op}$ which must exist because
$\nob{\flag}{\ds}$ is zonotopally well-covered. Add the segment
$\ls_0$ from $\origin$ to ${\relw{\pol,\op}{\dir} \cdot \dir}$ to obtain
a rational zonotope
\begin{equation*}
\ls_0+ \ls_1+\ldots+\ls_\ell \quad
\subseteq \quad \ls_0 + \feasible{\pol}{\dir}{\op} \quad
\subseteq \quad \nob{\flag}{\ds}
\end{equation*}
inside $\nob{\flag}{\ds}$ with valuation vertex $\op$. If $k$ is a
common denominator of its vertices, the $k$-th dilate is the Newton
polytope of a product of binomials which vanishes to order 
\begin{equation*}
  k \cdot \sum_{i=0}^{\ell}{\llen_\latm(\ls_i)}
  \ = \
  k \cdot \left(
    \relw{\pol,\op}{\dir} + \wid{\linf}(\feasible{\pol}{\dir}{\op})
  \right)
  \ = \ 
  k \cdot (\a' + \b')
\end{equation*}
as required.
\end{proof}

\begin{remark}
The property of being centrally-symmetric is not sufficient for being zonotopally well-covered. Consider for instance the polytope 
\begin{equation*}
\pol= \conv((0,0),(2,1),(1,3),(-1,2)) \subseteq \R^2
\end{equation*}
in Figure~\ref{fig_bad_square} and the direction $\dir=(-1,0)$ with $u=(0,1)$. Then for the point ${\op=(-\frac{1}{2},1)}$ the polygon has length $\relw{\pol,\op}{\dir}=\frac{5}{2}$ at $\op$ with respect to $\dir$. The intersection \linebreak $\feasible{\pol}{\dir}{\op}=\pol \cap ( \pol+ \frac{5}{2}\cdot (-1,0))$ is just a line segment $\ls$ whose lattice length is $\llen_{\Z^2}(\ls)=\frac{1}{2}$. But on the other hand, we have $\wid{\linf}(\feasible{\pol}{\dir}{\op})=1 > \frac{1}{2}$. 

\begin{figure}[h!]
\centering
\includegraphics[scale=0.2]{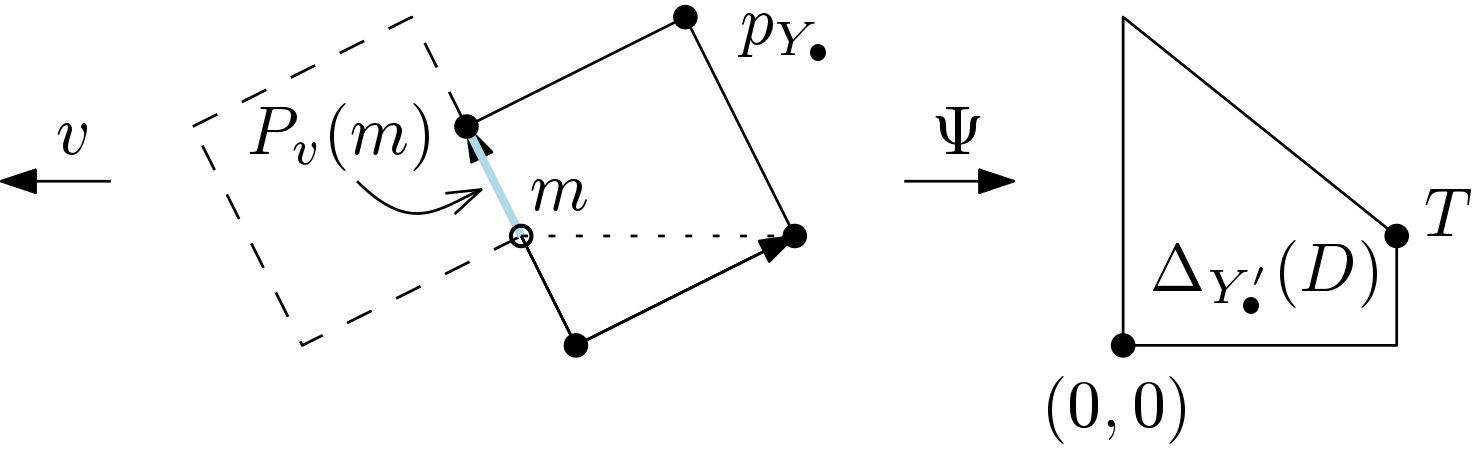}
\caption{An instance of a centrally-symmetric polytope $\pol$ that is not zonotopally well-covered.  }\label{fig_bad_square}
\end{figure}

It remains to argue, why any other direction $\dir \in \Z^2$ will also
fail. If we interpret $\pol$ as the Newton--Okounkov body
$\nob{\flag}{\ds}$ for some completely toric situation
$\var,\ds,\flag$ then the shifting process by the vector $\dir=(-1,0)$
yields the polytope on right in Figure~\ref{fig_bad_square} as the
Newton--Okounkov body $\nob{\flag'}{\ds}$ for the adjusted flag
$\flag'$, where $Y'_1$ is the curve determined by $\dir$ and
$Y_2'=\gen=(1,1)$.  Consider the Newton--Okounkov function $\fct'_\gen
\colon \nob{\flag'}{\ds} \to \R$. Since $\fct'_\gen (\a',b') \leq
\a'+b'$ and $\max_{(\a',\b') \in
  \nob{\flag'}{\ds}}{\fct'_\gen(\a',\b') }$ is independent of the
flag, this yields that $\max{\fct_\gen'} \leq \frac{7}{2}$. A straight
forward computation shows that any primitive direction $\dir \in \Z^2$
with $\|\dir\| > 1$ results in a vertex $(0,\b') \in
\nob{\flag'}{\ds}$ with $\b' > \frac{7}{2}$ which is a contradiction
to the above.\\
\\
Although $P$ is not the polytope of an ample divisor on a smooth
surface, it can be used as a starting point to construct such an
example: The minimal resolution $\pi \colon X^*_P \to X_P$ has a
centrally-symmetric fan. There is a `centrally-symmetric' ample
$\Q$-divisor on $X^*_P$ near the nef divisor $\pi^*D$. Now scale up
the resulting rational polygon to a lattice polygon. \\
\\ 
A similar argument applies to the polygon from Example 4.6 in \cite{castravet2020blownup}, which is depicted in Figure~\ref{fig:7eck}. The authors construct examples of projective toric surfaces whose blow-up at a general point has a non-polyhedral pseudo-effective cone. In this context they introduce, what they call \emph{good} polytopes. For our particular instance of a good polytope the authors argue, that all sections $s \in \sps ( \var,\she_\var(k\ds))$ will have order of vanishing at most $7k$ at the general point. However, all primitive directions $\dir \in \Z^2$ will produce a vertex of $\pwl(\pol)$ with coordinate sum $>7$. 
\end{remark}

\begin{figure}
    \centering
    \includegraphics[scale=0.12]{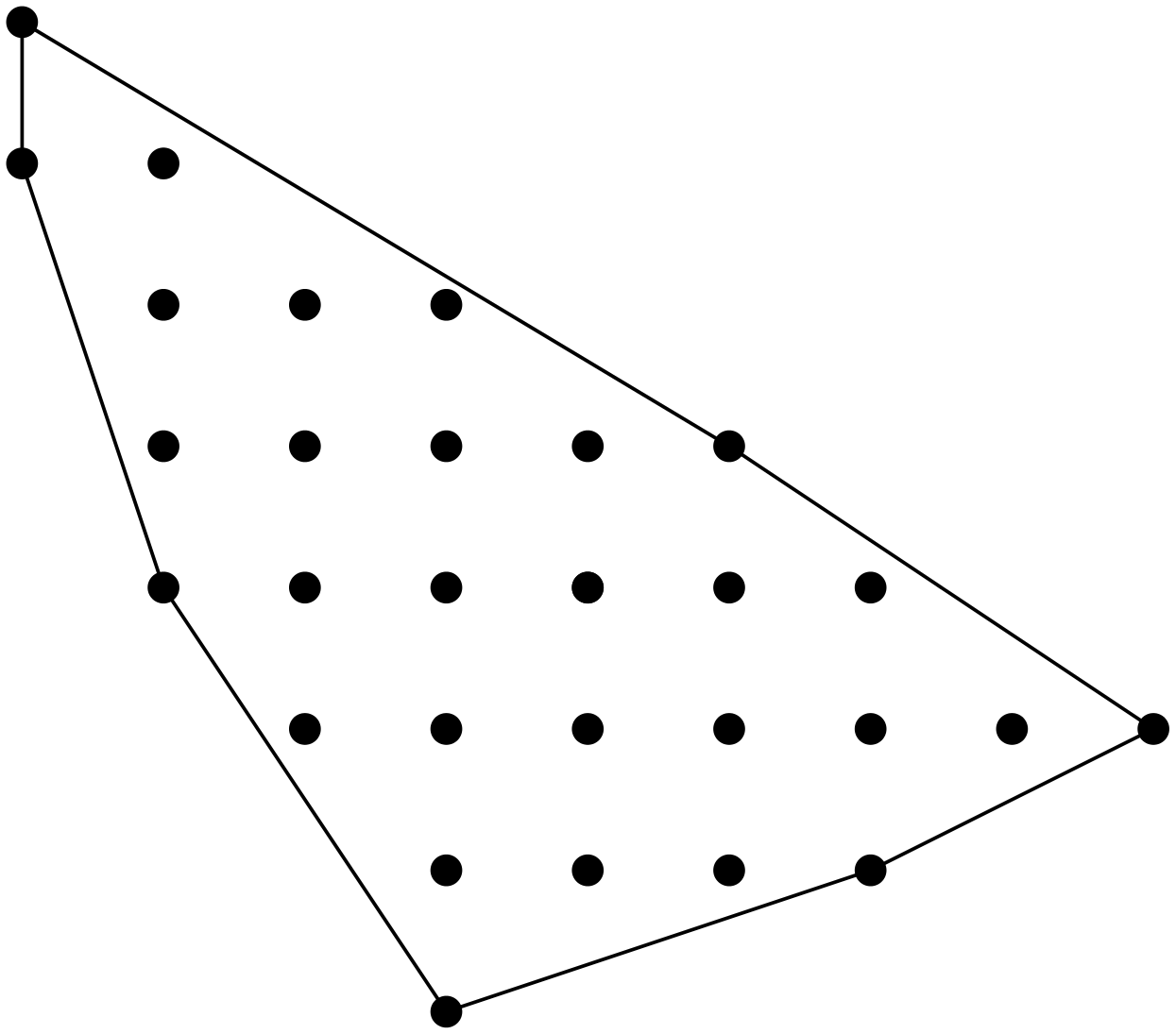}
    \caption{A polygon for which our approach does not work.}
    \label{fig:7eck}
\end{figure}


\section[Rationality of Certain Seshadri Constants]{Rationality of Certain Seshadri Constants on Toric Surfaces}
\label{sec:seshadri}

There is a direct link between the rationality of Seshadri constants on surfaces and that of  integrals of Newton--Okounkov functions. Let $\var$ be a smooth projective surface, $\ds$ an ample divisor, and $\point \in \var$ a point. We denote the blow-up of $\point$ with exceptional divisor $E$ by $\pi \colon \var' \to \var$. The \emph{Seshadri constant} is the invariant
\begin{equation}
\seh(\var,\ds;\point) \coloneqq \sup\{ t > 0 \suchthat \pi^*\ds - t E \text{ is nef}\}.
\end{equation} 
It measures the local positivity of $\ds$ at the point $\point$. Seshadri constants provide information on the shape of the nef and effective  cones of the surface $\var'$ in the direction of $-E$. Although they have been studied for over thirty years, several basic questions about them remain unanswered. One of the main questions is the rationality of $\seh(\var,\ds;\point)$. It is expected that there will be instances (even in dimension two) when irrational Seshadri constants occur (in fact, this would be consistent with Nagata's conjecture \cite{DKMS}), at the same time, no irrational example has been found so far.  In particular, it is known that Seshadri constants on del Pezzo, Enriques~\cite{SzembergEnriques}, abelian~\cite{BauerAbelian} and certain K3 surfaces~\cite{BauerQuartic,GalatiKnutsen,Knutsen} are rational. Certainly, if the blow-up of $\var$ at $\point$ has a finite rational polyhedral effective cone, then $\seh(\var,\ds;\point)$ is forced to be rational.  \\
\\
Our lack of knowledge about the rationality of Seshadri constants on surfaces is  all the more mysterious, since in dimension two there is one way in which $\seh(\var,\ds;\point)$ can be irrational: if it is equal to $\sqrt{(\ds^2)}$ \cite[Section 5.1]{lazarsfeld2004positivity} and $(\ds^2)$ is not a square. If the latter arithmetic condition does not hold, then the rationality of $\seh(\var,\ds;\point)$ is equivalent to the existence of a (necessarily negative) curve $C$ on the blow-up of $\var$ at $\point$  orthogonal to $\pi^*\ds-tE$ for some $t<\sqrt{(\ds^2)}$. In this sense the irrationality of $\seh(\var,\ds;\point)$ is evidence for the non-existence of certain irreducible curves of negative self-intersection on the blow-up. 

\begin{remark}
By the duality between the nef and effective cones on a surface, if \linebreak $\seh(\var,\ds;\point) < \mu_{E}(\pi^*\ds)$ then both numbers are rational. 
As a consequence, if one can find an effective divisor of the form $\pi^*\ds-tE$  with  $\sqrt{(\ds^2)}< t$ then $\seh(\var,\ds;\point)\in\Q$. 
\end{remark}

As the first known rationality criterion we adjust Theorem~3.6 and
Remark~3.7 from \cite{ito14} to our situation.

\begin{theorem} \label{thm:ito-width}
  In the situation of our general set-up~\ref{notation},
  if $\wid{\linf}(\pol) \le \relw{\pol}{\dir}$ then $\seh(\var,\ds;\gen)
  = \wid{\linf}(\pol)$.

  In particular, in this case, $\seh(\var,\ds;\gen)$ is rational.
\end{theorem}

In~\cite{sano2014seshadri} Sano studies Seshadri constants on rational surfaces with anti-canonical pencils.  More precisely, he considers a smooth rational surface $\var$ that is either a composition of blow-ups of $\PP^2$ or of a Hirzebruch surface $\hir_d$ such that $\dim|-\cand{\var}| \geq 1$. In terms of the corresponding polytope this means that $\divp{-\cand{\var}}$ contains at least two lattice points. In these cases, he gives explicit formulas for the Seshadri constant $\seh(\var,\ds;\gen)$ of an ample divisor $\ds$ at a general point $\gen \in \var$ in~\cite[Theorem 3.3 and Corollary 4.12]{sano2014seshadri}. As a consequence he obtains rationality in the cases above as observed in Remark~4.2.\\
\\
In~\cite{lundman2015computing} Lundman computes Seshadri constants at a general point $\gen$ for some classes of smooth projective toric surfaces. It follows in particular that the Seshadri constant is rational in these cases. The characterization of the classes involve the following definitions.

\begin{definition}
Let $\lb$ be a line bundle on a smooth variety $\var$ and $\point \in \var$ a smooth point with maximal ideal $\maxi_\point \subseteq \she_\var$. For a $k \in \N$ consider the map
\begin{eqnarray*}
j_\point^k \colon \sps(\var,\lb) & \to & \sps(\var,\lb \otimes \she_\var / \maxi^{k+1}_\point) \\
s & \mapsto & \left(s(\point),\ldots,\frac{\partial ^t s}{\partial\underline{\point}^t}(\point),\ldots \right)_{t \leq k},
\end{eqnarray*}
where $\underline{\point}=(\point_1,\ldots,\point_\dimm)$ is a local system of coordinates around $\point$. We say that $\lb$ is \emph{$k$-jet spanned} at $\point$ if the map $j_\point^k$ is surjective. We denote by $\djs{\lb}{\point}$ the largest $k$ such that $\var$ is $k$-jet spanned at $\point$ and call it the \emph{degree of jet separation} of $\lb$ at $\point$.  
\end{definition}

So the map $j_\point^k$ takes $s$ to the terms of degree at most $k$ in the Taylor expansion of $s$ around $\point$. For $\var$ a projective toric variety let $s_0,\ldots,s_d$ be a basis for $\sps(\var,\lb)$. Then $\lb$ is $k$-jet spanned at $\point\in \var$ if and only if the \emph{matrix of $k$-jets}
\begin{equation*}
J_k(\lb) \coloneqq (J_k(\lb))_{i,j} \coloneqq \left(\frac{\partial ^{|t|}}{ \partial_{\point_{t_1}}\partial_{\point_{t_2}} \cdots \partial_{\point_{t_n}}}(s_i) \right)_{0 \leq i \leq d, 0\leq |t| \leq k}
\end{equation*}
has maximal rank when evaluated at the point $\point$, where ${t=(t_1,\ldots,t_n)\in \N^\dimm}$ and \linebreak ${|t|=|t_1+\cdots +t_\dimm|}$.

\begin{definition}[{\cite[compare Definition 1.15]{polyhedralAdjunction}}]
Let $\var$ be a smooth projective toric variety and $\ds$ a torus-invariant divisor on $\var$. We define the \emph{codegree} $\qco(\ds)$ as
\begin{equation*}
\qco(\ds) \coloneqq (\sup \{t > 0 \suchthat  \divp{t \cand{\var}+\ds} \text{ is non-empty }\})^{-1}
\end{equation*} and call the polytope $\core(\divp{\ds}) \coloneqq \divp{\qco(\ds)^{-1}\cand{\var}+\ds}$ the \emph{core} of $\divp{\ds}$.
\end{definition}

\begin{theorem}[{\cite[Theorem 1]{lundman2015computing}}]
Let $\var$ be a smooth toric surface and $\lb$ an ample line bundle. If $\var$ is a projective bundle or $\djs{\lb}{\gen} \leq 2$, then $\seh(\var,\lb;\gen)=\djs{\lb}{\gen}$. 
\end{theorem}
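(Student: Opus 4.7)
The plan is to prove the equality $\seh(\var,\lb;1)=\djs{\lb}{1}$ by establishing matching upper and lower bounds. The lower bound $\seh(\var,\lb;1)\geq \djs{\lb}{1}$ is general and holds for any smooth projective variety. It follows from the standard observation of Demailly and Beltrametti--Sommese that if $\lb$ is $k$-jet spanned at a point $\point$, then $\pi^*\lb - kE$ is globally generated on the blow-up $\pi \colon \var' \to \var$ of $\point$: a basis of $k$-jets at $\point$ produces sections of $\pi^*\lb$ vanishing to orders $0,1,\ldots,k$ along $E$ and generating the relevant quotients, which implies nefness of $\pi^*\lb-kE$. Taking $k=\djs{\lb}{1}$ and applying this at a general point gives the desired inequality.

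For the upper bound, I would produce for each case a curve $\cur$ through a general point $\point$ with $\frac{\lb.\cur}{\op_\point \cur} \leq \djs{\lb}{1}$; recall that $\seh(\var,\lb;\point) = \inf_C \frac{\lb.\cur}{\op_\point \cur}$. In the \emph{projective bundle} case any smooth projective toric surface which is a $\PP^1$-bundle is a Hirzebruch surface $\hir_d$, and here I would use the fiber $\face$ of the ruling through $\point$ together with the combinatorial description of $\djs{\lb}{1}$ for an ample line bundle $\lb = a H_\infty + b F$ on $\hir_d$ (where $H_\infty$ is a positive section and $F$ a fiber). A direct computation of $\djs{\lb}{1}$ via the Di~Rocco criterion on $\divp{\lb}$ (checking which rescalings of the standard simplex fit inside) matches the intersection ratio obtained from the fiber, giving $\seh \leq \djs{\lb}{1}$ exactly.

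In the \emph{low jet separation} case $\djs{\lb}{1}\leq 2$, I would translate the bound $\djs{\lb}{1}\leq 2$ into a combinatorial constraint on the Newton polytope $\divp{\lb}$: $\lb$ failing to be $3$-jet spanned at the general point $\gen=(1,1)$ means the matrix of $3$-jets at $\gen$ drops rank, and by evaluating partial derivatives of torus characters $\chi^m$ at $\gen$ this translates into a linear dependence among binomial coefficients $\binom{m}{v}$ for $m \in \divp{\lb}\cap \latm$ and $|v|\leq 3$. Such a dependence can be used to produce a global section $s$ whose Newton polytope is a sum of segments (essentially a product of terms of the form $(x^{v_i}-1)$) meeting $\gen$ with multiplicity exceeding $\djs{\lb}{1} = 2$; its Newton polytope inside $\divp{\lb}$ together with the mixed volume computation from Proposition~\ref{prop_mv} bounds $\frac{\lb.\cur}{\op_\gen \cur}$ from above by $\djs{\lb}{1}$, which is what we want.

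The main obstacle in both cases is extracting from the abstract rank condition on the matrix of $1$-jets (and its subsequent failures) an \emph{explicit} curve with low intersection-to-multiplicity ratio. In the Hirzebruch case this is under control because the Mori cone has only two extremal rays, so the search is reduced to sections and fibers. In the $\djs{\lb}{1}\leq 2$ case the difficulty is more serious and requires a case analysis of which polygons $\divp{\lb}$ can arise, together with an explicit construction of the relevant product of binomials. This is precisely where the anti-blocking/zonotopal perspective developed in Corollary~\ref{cor_par} and the general strategy of Section~\ref{sec:general-point} could eventually provide a more uniform proof than a case-by-case analysis.
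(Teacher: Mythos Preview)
This theorem is quoted from Lundman \cite{lundman2015computing} and is not proved in the present paper; it is stated only as background to situate the paper's own rationality results. There is therefore no proof in the paper to compare your proposal against.

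On the merits of your sketch itself: the lower bound $\seh(\var,\lb;1)\geq \djs{\lb}{1}$ via global generation of $\pi^*\lb-kE$ is correct and standard. The upper bounds, however, are not established. In the projective bundle case you assert that the fiber's intersection ratio ``matches'' $\djs{\lb}{1}$ without verifying it; what is actually needed is that both invariants equal the minimal lattice width of the trapezoid $\divp{\lb}$, and depending on the parameters the minimizing curve is a fiber in one direction or a section-type curve in the other, so invoking only the fiber is not sufficient. In the $\djs{\lb}{1}\leq 2$ case you correctly identify the difficulty but do not resolve it: turning a rank drop in the jet matrix at $\gen$ into an explicit Seshadri-submaximal curve is the entire substance of the argument, and your appeal to Proposition~\ref{prop_mv} and the strategy of Section~\ref{sec:general-point} does not bridge the gap, since those tools compute Newton--Okounkov functions and the function $\beta(t)$ rather than directly producing a curve $C$ with $\lb\cdot C/\op_\gen C\leq 2$. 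As written, the second half of your proposal is an outline of what one would like to do, not a proof.
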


The other Theorems in~\cite{lundman2015computing} that yield rationality of Seshadri constants both require $\core(\divp{\ds})$ to be a line segment.\\
\\
Just as the rationality of Seshadri constants follows from that of the corresponding pseudo-effective thresholds, it can also be deduced from the rationality of the associated integral in the following way.

\begin{corollary}[{\cite[Corollary 4.5]{KMR}}]
Let $\var$ be a smooth projective surface, $\point \in \var$, and $\ds$ an ample Cartier divisor on $\var$. Then $\seh(\var, \ds;\point)$ is rational if $\int_{\nob{\flag}{\ds}}{\fct_\point}$ is rational, where $\fct_\point$ is the Newton--Okounkov function coming from the geometric valuation associated to $\point$ and $\flag$ any admissible flag. 
\end{corollary}

We apply this criterion to an example for which Lundman's and Sano's criteria do not apply. 

\begin{example}\label{ex_16eck}
We consider a blow-up $\pi \colon \var \to \PP^2$ of the projective plane in 13 points, namely the toric variety $\var$ whose associated fan $\fan$ is depicted in Figure~\ref{fig_fan_16eck}. 

\begin{figure}[h!]
\centering
\includegraphics[scale=0.2]{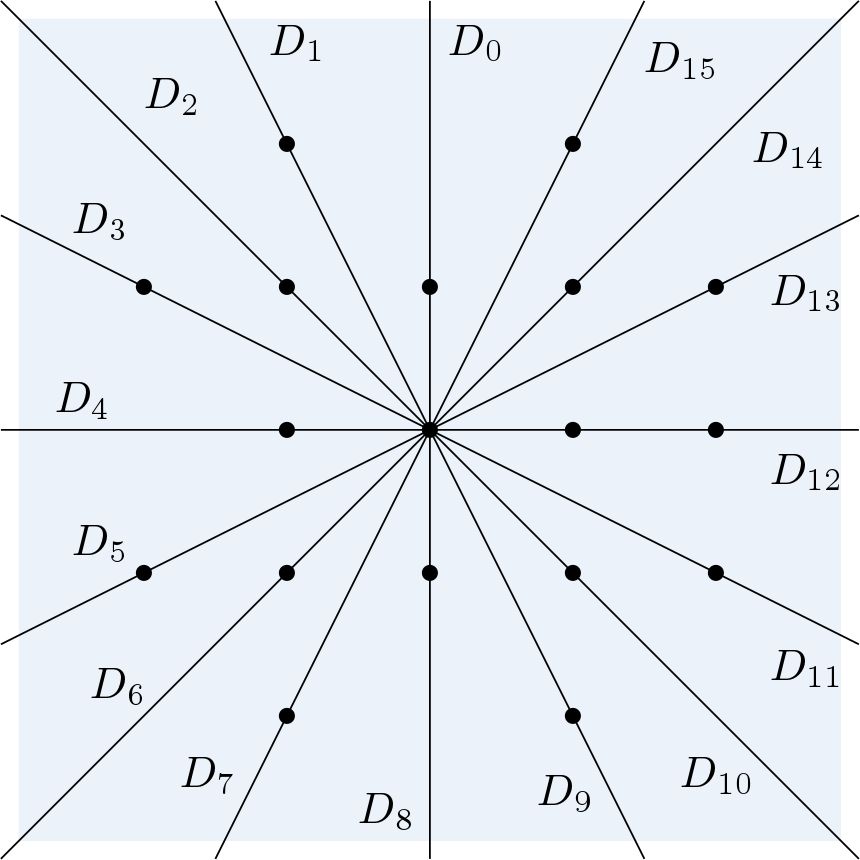}
\caption{The fan $\fan$ with associated torus-invariant prime divisors $\ds_0,\ldots,\ds_{15}$.} \label{fig_fan_16eck}
\end{figure}

The torus-invariant prime divisors are denoted by $\ds_0,\ldots,\ds_{15}$ and choose  $\ds=\ds_1+2\ds_2+6\ds_3+5\ds_4+15\ds_5+11\ds_6+19\ds_7+9\ds_8
+18\ds_9+10\ds_{10}+13\ds_{11}+4\ds_{12}+4\ds_{13}+\ds_{14}$ as an ample divisor on $\var$. For the torus-invariant flag ${\flag \colon \var \supseteq Y_1 \supseteq Y_2}$ with ${Y_1=\overline{ \{y=0\}}}$ and $Y_2=(0,0)$ this gives the polytope $\divp{\ds}$ in Figure~\ref{fig_16eck} as the Newton--Okounkov body $\nob{\flag}{\ds}$. We have $\dim|-\cand{\var}|=0$, $\core(\divp{\ds})$ is a point and the degree of jet separation is $\djs{\lb}{\gen}=9$. Thus this example does not fall in any of the classes covered by Sano or Lundman.  

\begin{figure}[h!]
\centering
\includegraphics[scale=0.2]{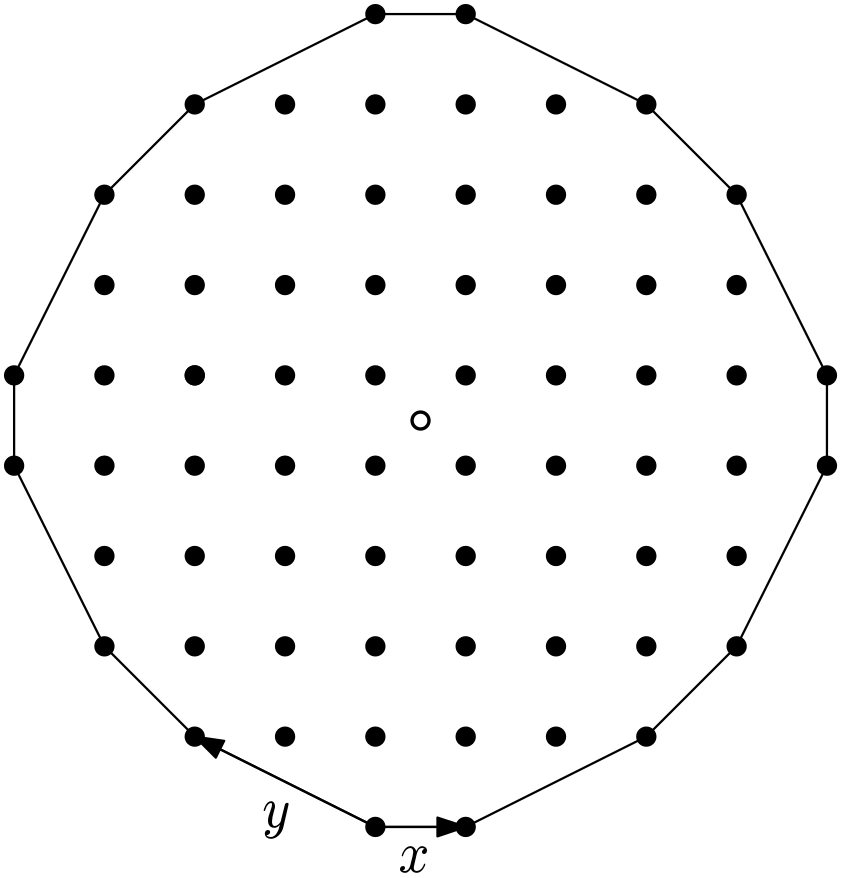}
\caption{The Newton--Okounkov body $\nob{\flag}{\ds} \cong \divp{\ds}$ with $\core(\divp{\ds})$ highlighted.}\label{fig_16eck}
\end{figure}

We claim that the Seshadri constant $\seh(\var,\ds;\gen)$ is rational. To verify this claim we consider the Newton--Okounkov function $\fct_\gen'$ on $\nob{\flag'}{\ds}$ coming from the geometric valuation $\ord_\gen$ at the general point $\gen=(1,1)$ and argue that its integral takes a rational value. In order to do this, consider the flag $\flag' \colon \var \supseteq Y_1' \supseteq Y_2'$, where $Y_1'$ is the curve given by the local equation $x^{-1}-1=0$ and $Y_2'=\gen$. Thus we obtain the Newton--Okounkov body  $\nob{\flag'}{\ds}$ with respect to this flag by the shifting process via the vector $v=(-1,0)$ as explained in Section~\ref{sec_nob_toric}. This gives the polytope $\nob{\flag'}{\ds}$ shown in Figure~\ref{fig_nob_16eck}.

\begin{figure}[h!]
\centering
\includegraphics[scale=0.18]{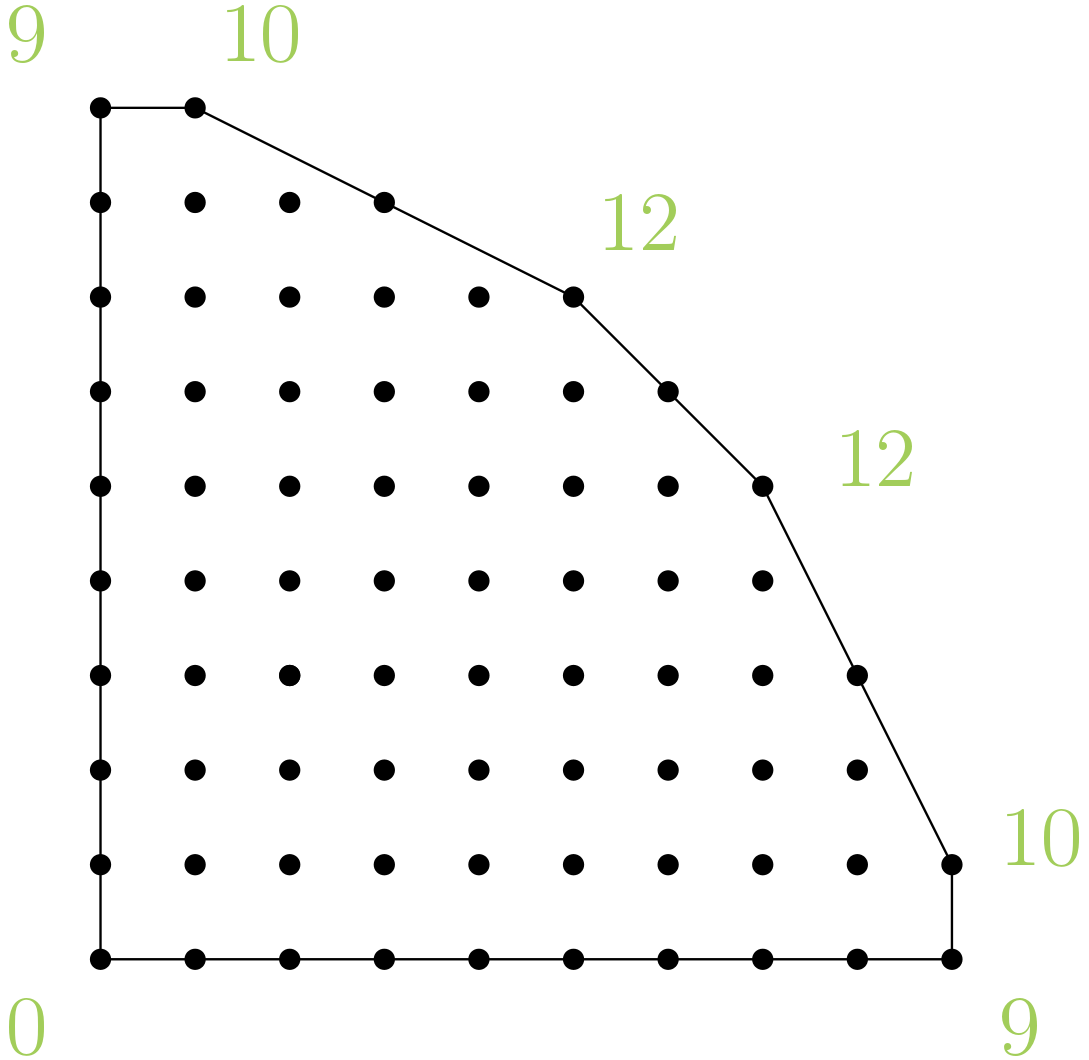}
\caption{The Newton--Okounkov body $\nob{\flag'}{\ds}$ with respective values of $\fct_\gen'$.}\label{fig_nob_16eck}
\end{figure}

We claim that the Newton-Okounkov function $\fct_\gen'$ on $\nob{\flag'}{\ds}$ that comes from the geometric valuation $\ord_\gen$ is given as $\fct'_\gen(\a',\b')=\a'+\b'$ for all $(\a',\b') \in \nob{\flag'}{\ds}$. To prove this, we consider the following global sections of $\sps(\var,\she_\var(\ds))$ as in Table~\ref{tab_specific_sec}. 

The sections are chosen in a way such that they get mapped to the vertices, when building the new Newton--Okounkov body $\nob{\flag'}{\ds}$ and a such that the order of vanishing  is ${\ord_\gen(s)=\a'+\b'}$ for a section $s$ that gets mapped to the point $(\a',\b') \in \nob{\flag'}{\ds}$. For the vertices in $\ver(\nob{\flag'}{\ds})$ these values realize a lower bound for the function $\fct_\gen'$.

\begin{center}
\begin{table}[h!]
 \begin{tabular}{|l|| c c c |} 
 \hline
 global section $s$ & image in $\nob{\flag}{\ds}$ & image in $\nob{\flag'}{\ds}$ & $\ord_\gen(s)$ \\ [0.5ex] 
 \hline\hline
 $s_1(x,y)=(x-1)(x^2y-1)^9$ & $(0,0)$ & $(1,9)$ & $10$ \\ 
 \hline
 $s_2(x,y)=x(x^2y-1)^9$ & $(1,0)$ & $(0,9)$ & $9$ \\
 \hline
 $s_3(x,y)=x^4y^4(x-1)^9(x^2y-1)$ & $(4,4)$ & $(9,1)$ & $10$ \\
 \hline
 $s_4(x,y)=x^6y^5(x-1)^9$ & $(6,5)$ & $(9,0)$ & $9$ \\
 \hline
 $s_5(x,y)=y(x-1)^5(x^2y-1)^7$ & $(0,1)$ & $(5,7)$ & $12$ \\  
 \hline
 $s_6(x,y)=xy^2(x-1)^7(x^2y-1)^5$ & $(1,2)$ & $(7,5)$ & $12$ \\  
 \hline
 $s_7(x,y)=x^{19}y^9$ & $(19,9)$ & $(0,0)$ & $0$ \\ 
 \hline
\end{tabular}
\caption{\label{tab_specific_sec}Global sections of $\she_\var(\ds)$ that realize lower bounds for the order of vanishing $\ord_\gen$.}
\end{table}
\end{center}

 Since the function $\fct_\gen'$ has to be concave, this yields $\fct_\gen'(\a',\b')=\a'+\b'$ on the entire Newton--Okounkov body. For the integral we obtain
\begin{equation*}
\int_{\nob{\flag'}{\ds}}{\fct_\gen'}=\frac{1295}{3},
\end{equation*}
which is rational and therefore the Seshadri constant $\seh(\var,\ds;\gen)$ is rational.

Although proving rationality of the Seshadri constant did not require knowing the values of the function $\fct_\gen$ on the Newton--Okounkov body $\nob{\flag}{\ds}$, determining these values in this particular example is of independent interest. It turns out that the approach of choosing sections whose Newton polytopes are zonotopes with prescribed edge directions is not always sufficient to maximize the order of vanishing at the general point $\gen$. For the function $\fct_\gen$ we expect 22 domains of linearity as shown in Figure~\ref{fig_regions_16eck} that arise from the shifting process in the direction of $v=(-1,0)$.

\begin{figure}[h!]
\centering
\includegraphics[scale=0.2]{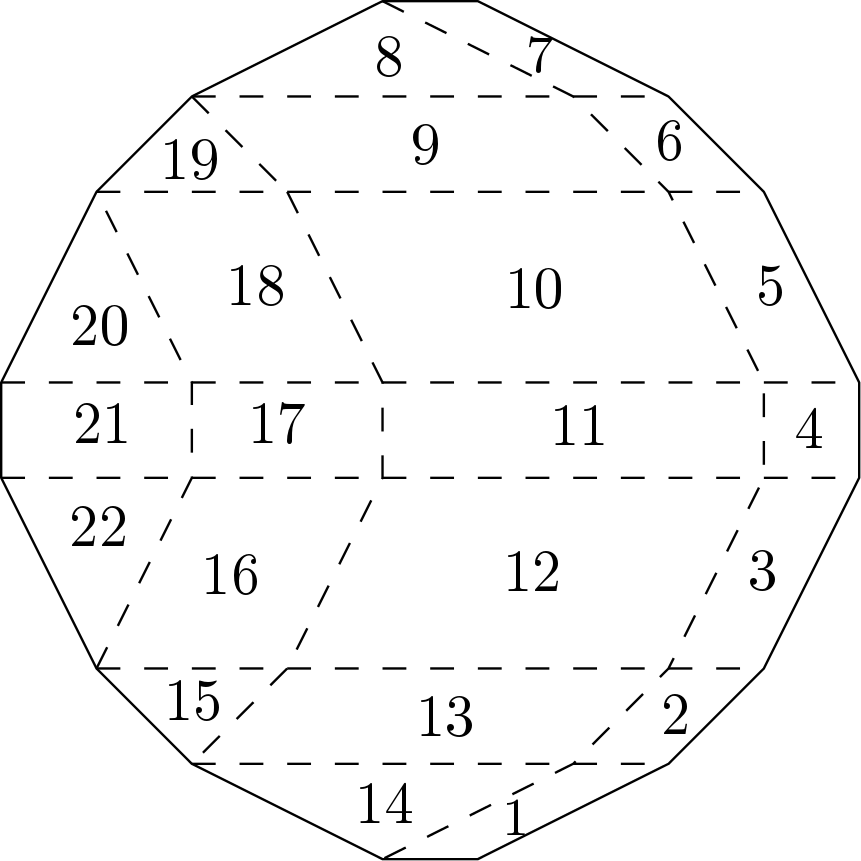}
\caption{Expected domains of linearity of the function $\fct_\gen$ on the Newton--Okounkov body $\nob{\flag}{\ds}$.}\label{fig_regions_16eck}
\end{figure}

As seen in the Appendix in Table~\ref{table_2} for the domains $1,\ldots,9,13,14,15$, and $19$ zonotopes using only edge directions of $\nob{\flag}{\ds}$ are sufficient. For the domains $10,11$, and $12$ we need a Minkowski sum of those edge directions and `small' triangles that have a high order of vanishing at $(1,1)$. The section $s(x,y)=x^3y^2-3xy+y+1$ for instance has order of vanishing $\ord_\gen(s)=2$ and its Newton polytope $\np(s)$ is depicted in Figure~\ref{fig_triangle_1}.

\begin{figure}[h!]
\centering
\includegraphics[scale=0.1]{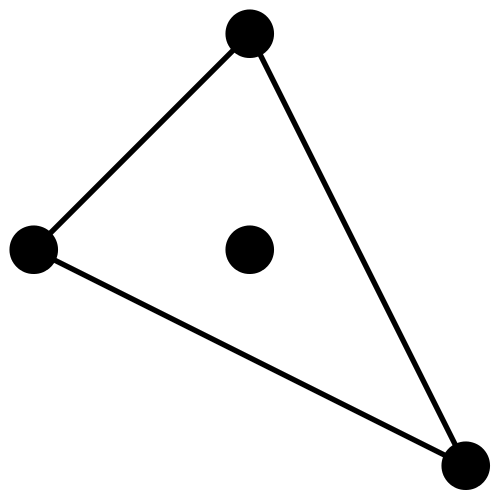}
\caption{The Newton polytope $\np(s)$ of the section $s(x,y)=x^3y^2-3xy+y+1$.}\label{fig_triangle_1}
\end{figure}

For the remaining regions $16,17,18,20,21$, and $22$ global sections with the desired order of vanishing at $\gen$ could not be found via computations up to $k=12$. We expect $\fct$ to take the values shown in Figure~\ref{fig_values16}.

\begin{figure}[h!]
    \centering
    \includegraphics[scale=0.18]{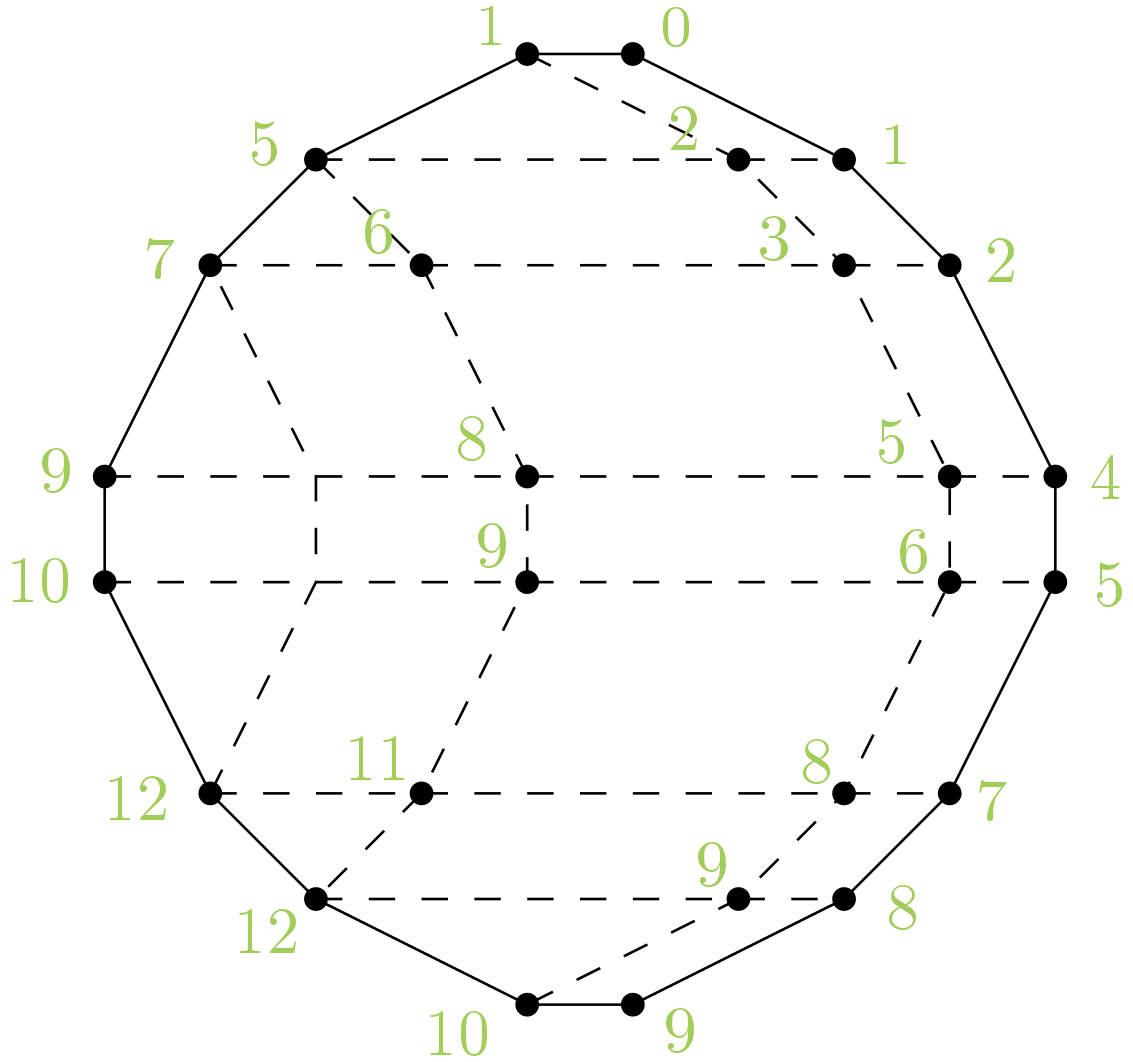}
    \caption{The values of $\fct$ on the Newton--Okounkov body $\nob{\flag}{\ds}$.}
    \label{fig_values16}
\end{figure}

\end{example}

The approach for proving rationality of the Seshadri constant applies to a certain class of polytopes. To describe this class we need to introduce the following terms.

\begin{definition}
Let $\pol \subseteq \R^2$ be a polygon and $\dir \in \Z^2$ a primitive direction. 
Set \linebreak ${\ver(\pol,\dir) \coloneqq \pwl^{-1}(\ver(\pwl(\pol)))}$ for the piecewise linear isomorphism from Section \ref{sec_tilting} and call it the \emph{relevant vertex set of $\pol$ with respect to $\dir$}.
\end{definition}

\begin{definition}\label{def_weakly}
Let $\pol \subseteq \R^2$ be a polygon. Let $\dir \in \Z^2$ be a primitive vector, and $\linf \in \dir^\perp$ a primitive integral functional. We call $\pol$ \emph{weakly zonotopally well-covered with respect to $(\dir,u)$} if for all points $\op \in \ver(\pol,\dir)$ the set $\feasible{\pol}{\dir}{\op}$ 
contains a zonotope ${\ls_1+\ldots+\ls_\ell}$ with none of the $\ls_i$ parallel to $\dir$,
such that 
\begin{equation*}
\sum_{i=1}^{\ell}{\llen_{\Z^2}(\ls_i)}=\wid{\linf}(\feasible{\pol}{\dir}{\op}).
\end{equation*}
The polygon $\pol$ is \emph{weakly zonotopally well-covered} if it is with respect to some $(\dir,\linf)$. 
\end{definition}

\begin{theorem}\label{thm_seshadri}
Let $\var$ be a smooth projective toric surface and $\ds$ an ample torus-invariant divisor on $\var$ with associated Newton--Okounkov body $\nob{\flag}{\ds}$ for an admissible torus-invariant flag $\flag$. If the polytope $\nob{\flag}{\ds}$ is weakly zonotopally well-covered, then
\begin{enumerate}
    \item we can determine $\int_{\nob{\flag}{\ds}}{\fct_\gen}$. 
    \item the Seshadri constant $\seh(\var, \ds;\gen)$ is rational. 
    \item the maximum $\max_{\nob{\flag}{\ds}}{\fct_\gen}$ is attained at the boundary of $\nob{\flag}{\ds}$.
\end{enumerate}
\end{theorem} 

\begin{proof}
Since all input data is torus-invariant, the Newton--Okounkov body $\nob{\flag}{\ds}$ is isomorphic to the polytope $\divp{\ds}$ for any admissible torus-invariant flag $\flag$. By assumption, this polytope is weakly zonotopally well-covered, so let $\dir=(\dir_1,\dir_2) \in \Z^2$ be its associated primitive direction. Consider the flag $\flag' \colon \var \supseteq \cur \supseteq \{\gen\}$, where $\cur$ is the curve given by the local equation $x^{\dir_1}y^{\dir_2}-1=0$ and $\gen=(1,1)$ is a general point on $\cur$. Then the shifting process explained in Section~\ref{sec_nob_toric} yields the Newton--Okounkov body $\nob{\flag'}{\ds}$ with respect to this new flag. 
By Corollary~\ref{cor_mv} this process relates the Newton--Okounkov bodies via a piecewise linear isomorphism $\pwl \colon \nob{\flag}{\ds} \overset{\sim}{\longrightarrow} \nob{\flag'}{\ds}$.\\
\\
 We show that the Newton--Okounkov function $\fct'_\gen \colon \nob{\flag'}{\ds} \to \R$, that comes from $\ord_\gen$, satisfies ${\fct'_\gen(\a',\b')=\a'+\b'}$ for all vertices $T=(\a',\b') \in \ver(\nob{\flag'}{\ds})$. In order to do so, apply the arguments of the proof of Theorem~\ref{thm_fct_zonotopally} to all $m \in \ver(\nob{\flag}{\ds},\dir)$. Together with the facts that $\fct'_\gen$ is  concave and has $\a'+\b'$ as an upper bound it follows that $\fct'_\gen(\a',\b')=\a'+\b'$ on the entire Newton--Okounkov body. Rationality of the integral $\int_{\nob{\flag'}{\ds}}{\fct'_\gen}$ yields rationality of the Seshadri constant $\seh(\var,\ds;\gen)$. Since the maximum is independent of the flag, and $\fct'_\gen$ is linear, it is attained at the boundary of $\nob{\flag}{\ds}$.
\end{proof}

Note, that zonotopally well-covered implies weakly zonotopally well-covered.

\begin{example}
To illustrate the proof we stick to Example ~\ref{ex_16eck}. The polytope $\nob{\flag}{\ds}$ is weakly zonotopally well-covered with respect to $(\dir,\linf)=((-1,0),(0,1))$. Consider, for instance, the vertex  $T=(7,5) \in \nob{\flag'}{\ds}$. Its preimage under the piecewise linear isomorphism $\pwl$ is $\op=(1,2) \in \nob{\flag}{\ds}$ and $\relw{\nob{\flag}{\ds},\op}{\dir}=7$. A global section which is mapped to $\op$ and $T$ respectively, is
\begin{equation*}
s(x,y)=xy^2 \cdot(x-1)^{7} \cdot (x^2y-1)^5
\end{equation*}
as seen in Figure~\ref{fig_sec_16eck} with $\ord_\gen(s)=7+5=12$. 

\begin{figure}[h!]
\centering
 \includegraphics[scale=0.32]{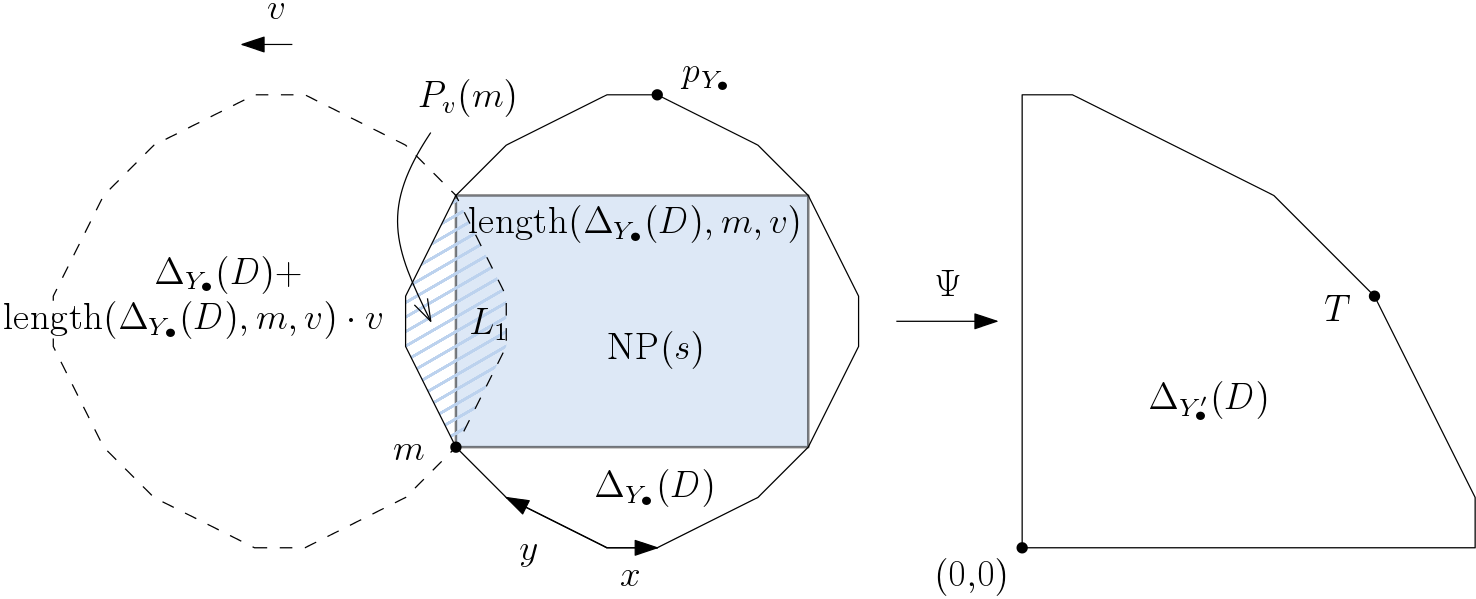}
\caption{The setup of the proof of Theorem~\ref{thm_seshadri} in the context of Example~\ref{ex_16eck}. }\label{fig_sec_16eck}
\end{figure}
\end{example}

We can construct classes of polarized toric surfaces for which our
method of guessing sections using convex geometry yields rationality
of the Seshadri constant while other methods like Ito's width bound,
Lundman's core criterion or Sano's anti-canonical pencil do not
apply. We explain the general method and illustrate it with an
explicit example. (The hard part is to find polygons for which the
above methods do not work.) \\
\\
In order to ensure that the core of our polygons is a point, we need
to introduce some machinery which might be of independent interest.
In analogy with the Fine interior of a rational polytope
(compare~\cite[\S4.2]{Fine}, \cite{BatyrevKasprzykSchaller}) we define the
\emph{Fine adjoint} $\adjoint{\pol}{c}$ of a convex body $\pol \subseteq \latm_\R$ and a
parameter $c > 0$ as follows.
$$
\adjoint{\pol}{c} \coloneqq \bigcap_{\linf \in N \setminus \{\origin\}}
\left\{ m \in M_\R \suchthat \langle \linf, m \rangle \ge \min \langle
  \linf, \pol \rangle + c \right\}
$$
If $P$ is a rational polytope and $c \in \Q_{>0}$, this is again a
rational polytope; $\adjoint{\pol}{1}$ is called \emph{Fine interior}
in~\cite{BatyrevKasprzykSchaller}.
If the toric variety $X_\pol$ associated with
$\pol$ has at most canonical singularities, this agrees with the
standard adjoint $\pol^{(c)}$ where the intersection is taken only
over facet defining $\linf$'s (see~\cite{polyhedralAdjunction}).

The \emph{Fine codegree} $\codeg{\pol}$ of $\pol$ is then the minimal $c$ for
which $\adjoint{\pol}{1/c} \neq \emptyset$. Finally, we call the last
non-empty Fine adjoint $\adjoint{\pol}{1/\codeg{\pol}}$ of $\pol$ its
\emph{Fine core} $\fineCore{\pol}$.

If $m \in \relint( \fineCore{\pol})$, call those $\linf \in N \setminus
\{\origin\}$ for which $\langle \linf, m \rangle = \min \langle \linf,
\pol \rangle + 1/\codeg{\pol}$ \emph{essential} for $\pol$
(compare~\cite[Lemma~2.2]{polyhedralAdjunction}).

For rational $\pol$ the Fine codegree will be a rational number and
hence the Fine core will be a rational polytope of positive
codimension. Figure~\ref{fig:core-examples} illustrates that in the
case of non-canonical singularities, the dimensions of core and Fine
core can differ, and even if both are points, they need not agree. 

\begin{figure}[htb]
  \centering
  \includegraphics[scale=0.15]{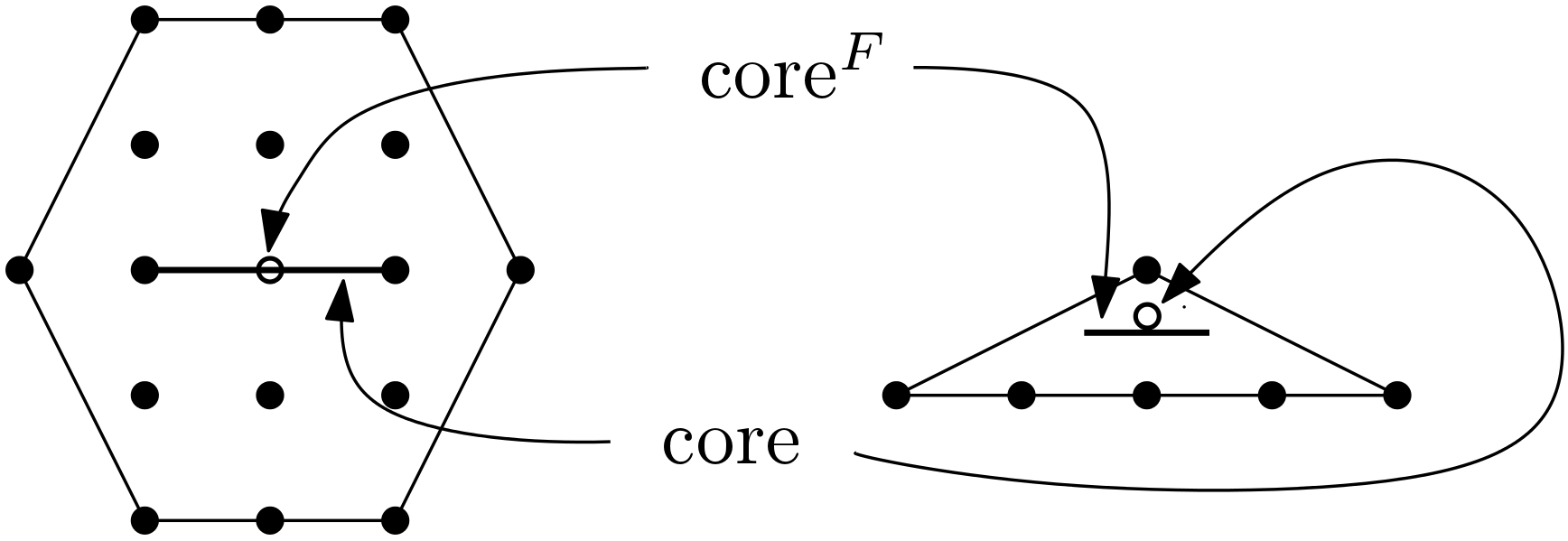}
  \caption{Core versus Fine core.}
  \label{fig:core-examples}
\end{figure}

\begin{lemma} \label{lemma:FineCore}
  Suppose $P$ and $Q$ are polytopes in $\latm_\R$ whose Fine cores are points \linebreak
  ${\fineCore{P} = \{m_P\}}$ and $\fineCore{Q} = \{m_Q\}$,
  respectively. Suppose further that the $\linf \in N \setminus
  \{\origin\}$ which are essential for both $P$ and $Q$ positively
  span $N_\R$.

  Then the Fine core of $kP + Q$ is the point $k m_P + m_Q$ for all $k
  \ge 1$.

  In particular, if $X_{P+Q}$ has at most canonical
  singularities, then the (usual) core of $kP + Q$ is this point.
\end{lemma}

With these preparations, we can describe our construction.
We write $\deg(P) = 2 \operatorname{area}(P)$ for the normalized
volume of $P$.

\begin{theorem} \label{thm:seshadri-construction}
   Suppose $P$ is a lattice polygon whose Fine core is a point
   such that $\wid{}(P) > \sqrt{\deg(P)}$ and $P$ supports a Laurent
   polynomial $s$ which vanishes to order $> \sqrt{\deg(P)}$ at
   $\gen$.

   Then there is a lattice polygon $Q$ so that for $k \gg 0$ the
   polygon $kP+Q$ satisfies
   \begin{enumerate}
   \item $\var_{kP+Q}$ is smooth, \label{thm:seshadri-construction:smooth}
   \item $\core(kP+Q)$ is a point, \label{thm:seshadri-construction:core}
   \item $\djs{D_{kP+Q}}{\gen} \ge k+1$, \label{thm:seshadri-construction:djs}
   \item $h^0(-K_{X_{kP+Q}}) = 1$, \label{thm:seshadri-construction:no-pencil}
   \item $\wid{}(kP+Q) >
     \sqrt{\deg(kP+Q)}$ \label{thm:seshadri-construction:width}
   \item $kP+Q$ supports a Laurent polynomial which vanishes to
     order $> \sqrt{\deg(kP+Q)}$ at
     $\gen$. \label{thm:seshadri-construction:section}
   \end{enumerate}
   In particular, $\seh(\var_{kP+Q},\ds_{kP+Q};\gen) \in \Q$.   
\end{theorem}

\begin{example}
Specific examples of such polygons $P$ are the triangles 
\begin{equation*}
\Delta^{(d)}
= \conv\left(d \cdot \Delta \cup \{(-1,-1)\} \right),
\end{equation*}
where $\Delta$
denotes the standard triangle $\conv\left((0,0),(1,0),(0,1)\right)$
from Figure~\ref{fig:Seshadri-examples}. Their parameters are
$\deg(\Delta^{(d)}) = d^2+2d$, $\wid{}(\Delta^{(d)}) = d+1$, and they
support a section which vanishes to order $\ge d+1$ at $\gen$ simply
because they contain more than $\dim_\C \C[x,y]/\langle x,y \rangle^d
= \binom{d+1}{2}$ lattice points so that the linear map
\begin{align*}
  \left\{ s \in \C[x,y] \suchthat \supp(s) \subseteq 
    P+(1,1) \right\} \quad &\to \quad \C[x,y]/\langle x,y \rangle^d \\
  s(x,y) \quad &\mapsto \quad s(x+1,y+1)
\end{align*}
must have a kernel. Specifically, for $P=\Delta^{(1)}$, we have
$\deg(P)=3$, $\wid{}(P)=2$, and $s = x+y+1/xy-3$ is a section which
vanishes to order $2$ at $\gen$.

\begin{figure}[htb]
  \centering
  \includegraphics[scale=0.17]{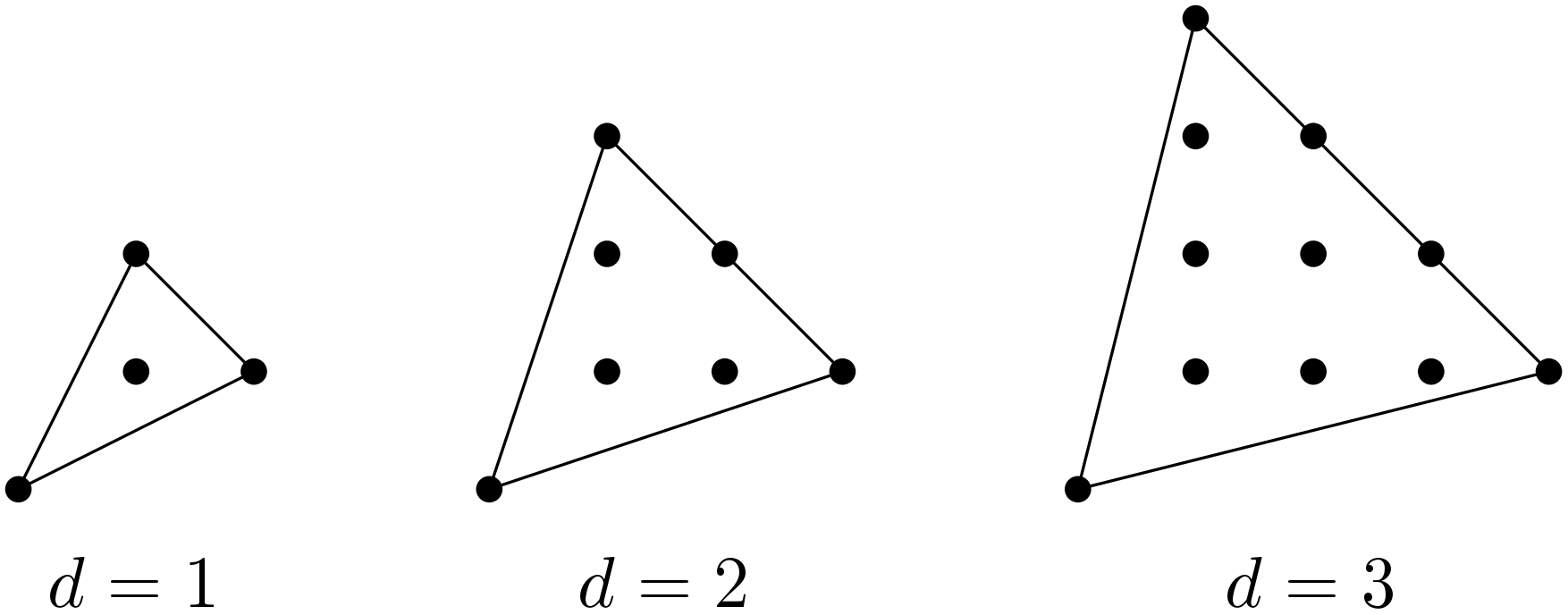}
  \caption{Examples of wide polygons $\Delta^{(d)}$ with small area
    for $d=1,2,3$.}
  \label{fig:Seshadri-examples}
\end{figure}

For our specific $P = \Delta^{(1)}$, the polygon $Q$ in
Figure~\ref{fig:Seshadri-example-Q} does the job.

\begin{figure}[htb]
  \centering
  \includegraphics[scale=0.11]{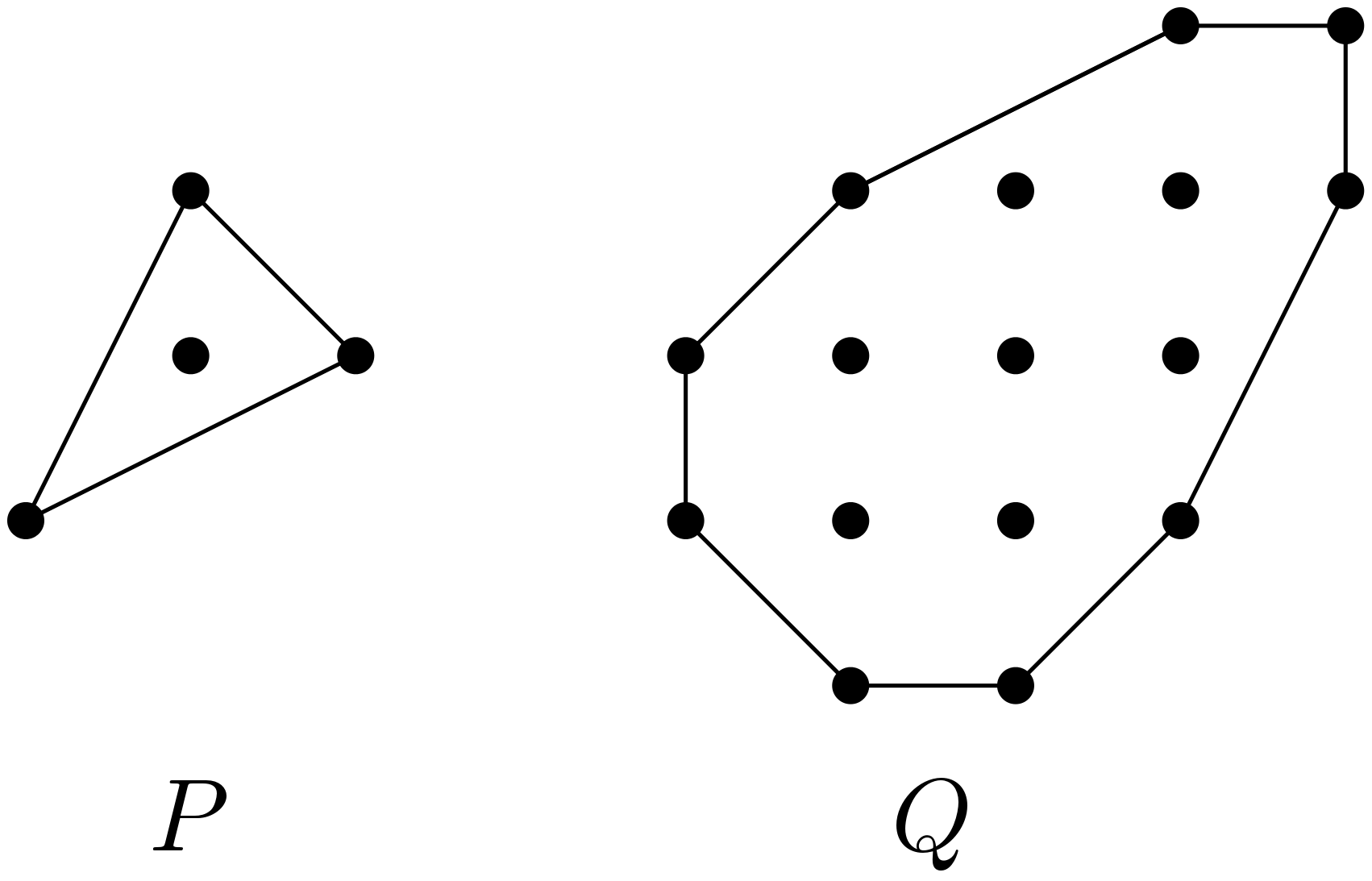}
  \caption{Resolving $\Delta^{(1)}$, eliminating anti-canonical
    sections, and ensuring that the core is a point.}
  \label{fig:Seshadri-example-Q}
\end{figure}
\end{example}

In order to proof the theorem, we need another lemma.

\begin{lemma}\label{lem_originonly}
 Let $\pol$ be a polygon whose Fine core is a point. Then the origin is the only lattice point in the interior of 
 \begin{equation*}
     \operatorname{Ess} \coloneqq \conv(\{u \in \lat \setminus \{0\} \suchthat u \text{ is primitive and essential for } \pol\}).
 \end{equation*}
\end{lemma}

\begin{proof}
Assume there exists a lattice point $u_0 \in \lat \setminus \{0\}$ in the interior of $\operatorname{Ess}$. Then there exist adjacent vertices $u_1,u_2 \in \lat$ of $\operatorname{Ess}$ and coefficients $\lambda_1,\lambda_2 >0$, such that $\linf_0=\lambda_1\linf_1+\lambda_2\linf_2$ with $\lambda_1+\lambda_2 <1$. For the essential vertices it holds that $\langle u_1, \fineCore{\pol} \rangle = \min  \langle \linf_1,\pol\rangle +1 / \codeg{\pol}$ and $\langle u_2, \fineCore{\pol} \rangle= \min  \langle \linf_2,\pol\rangle +1 / \codeg{\pol} $, respectively. Thus for $u_0$ we have
\begin{align*}
    \langle \linf_0,\fineCore{\pol}\rangle  = & \ \langle \lambda_1\linf_1+\lambda_2\linf_2, \fineCore{\pol} \rangle \\
    = & \ (\lambda_1+\lambda_2)\cdot 1 / \codeg{\pol} + \lambda_1 \cdot \min  \langle \linf_1,\pol\rangle  + \lambda_2 \cdot \min  \langle \linf_2,\pol\rangle \\
    \leq & \ (\lambda_1+\lambda_2)\cdot 1 / \codeg{\pol} +  \min  \langle \linf_0,\pol\rangle  \\
     <& \ 1 / \codeg{\pol} +  \min  \langle \linf_0,\pol\rangle.
\end{align*}
This is a contradiction to the definition of $\fineCore{\pol}$. Thus, such a lattice point cannot exist and therefore the origin is the only interior lattice point.  
\end{proof}

\begin{proof}[Proof of Theorem~\ref{thm:seshadri-construction}]
The inequalities \ref{thm:seshadri-construction:width} and
\ref{thm:seshadri-construction:section} hold by assumption for large
$k$, no matter what $Q$ is. Inequality
\ref{thm:seshadri-construction:djs} holds for all $k$ because $kP+Q$
will contain a $k+1$ fold dilate of a unimodular triangle. 

Toric resolution of singularities is a standard procedure,
see~\cite[Chapters~10 \& 11]{cox2011toric}. If necessary, we blow up
further torus fixed points until only one anti-canonical section is
left. This determines the normal fan of $Q$.

It remains to pick $Q$ with the given fan so that the Fine
core is a point and so that we can apply Lemma~\ref{lemma:FineCore}. 
To this end, consider the set of primitive ray generators which
are essential for the given $P$. As $\fineCore{P}$ is a point, the
origin is the only lattice point in the interior of 
their convex hull $Q_1^\vee \subset N_\R$ due to Lemma~\ref{lem_originonly}. Denote $E$ the set of
vertices of $Q_1^\vee$ and denote $Q_1 \subset M_\R$ the polar dual of
$Q_1^\vee$. As $Q_1$ is a simple polytope, we can pick a large $J \in
\Z_{>0}$ so that for every $\linf_0 \in E$ there is a polygon
$Q_{\linf_0}$ with the same normal fan as $Q_1$ such that $\min
\langle \linf_0, Q_{\linf_0} \rangle = -1+1/J$ while $\min \langle
\linf, Q_{\linf} \rangle = -1$ for all other $\linf \in E \setminus
\{\linf_0\}$. By adding appropriate multiples of the $J Q_\linf$ for
$\linf \in E$ to $Q_1$, we can assure that all $\linf \in E$ are
essential for the resulting $Q$.
\end{proof}

\newpage
\section{Appendix}

We give the respective global sections and Newton polytopes that realize a lower bound for the Newton--Okounkov function $\fct_\gen$ on the Newton--Okounkov body $\nob{\flag}{\ds}$ from Example~\ref{ex_16eck}.

\renewcommand{\arraystretch}{1.3}
\begin{table}[h!]
\makebox[\linewidth]{
 \begin{tabular}{|c|| >{\centering\arraybackslash}m{3cm}  >{\centering\arraybackslash}m{4cm}  c l l  c|} 
 \hline
 Region & Inequalities & Newton Polytope & \multicolumn{3}{c}{Section} & $\ord_\gen(s)$ \\ [0.5ex] 
 \hline\hline
 & & & & & &\\
 \multirow{4}{*}{1} &  & \multirow{4}{*}{\includegraphics[width=1in]{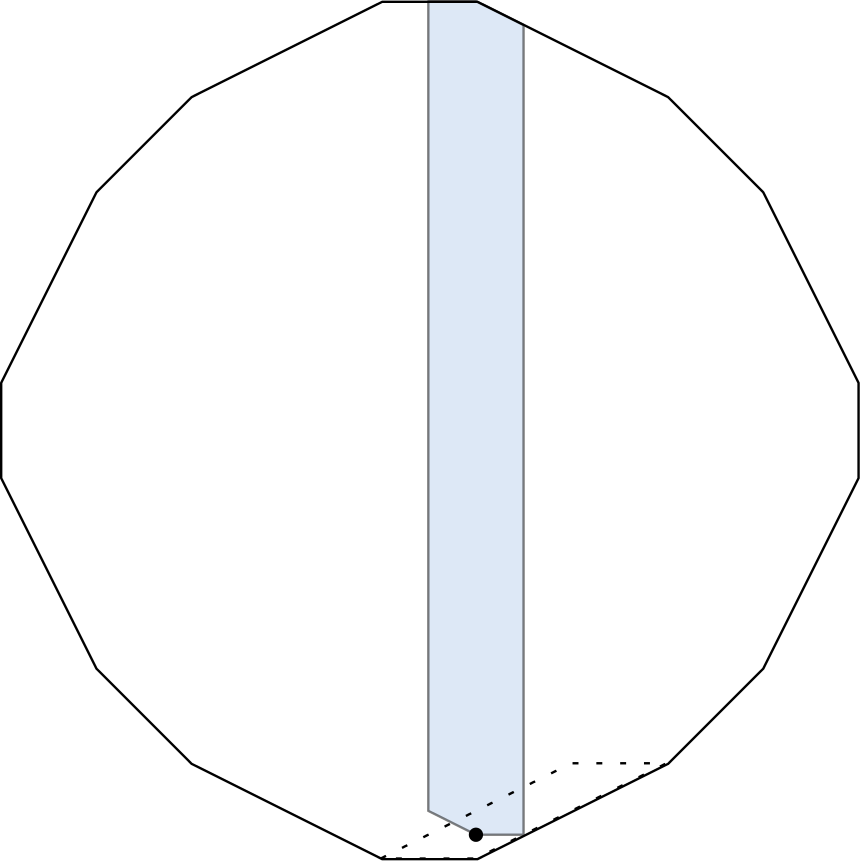}} & 		
 $s(x,y)$ & $=$ & $x^a y^b$ & \multirow{4}{*}{$10-\a+3\b$} \\
 & $0 \leq b \leq 1$, & & & & $(x-1)^{1-a+4b}$ & \\
 & $0\leq a-4b \leq 1 $ & & & & $(y-1)^{b}$ & \\
 & & & & & $(x^2y-1)^{9-2b}$ &\\ 
 & & & & & & \\
 \hline
  & & & & & &\\
 \multirow{5}{*}{2} &  & \multirow{5}{*}{\includegraphics[width=1in]{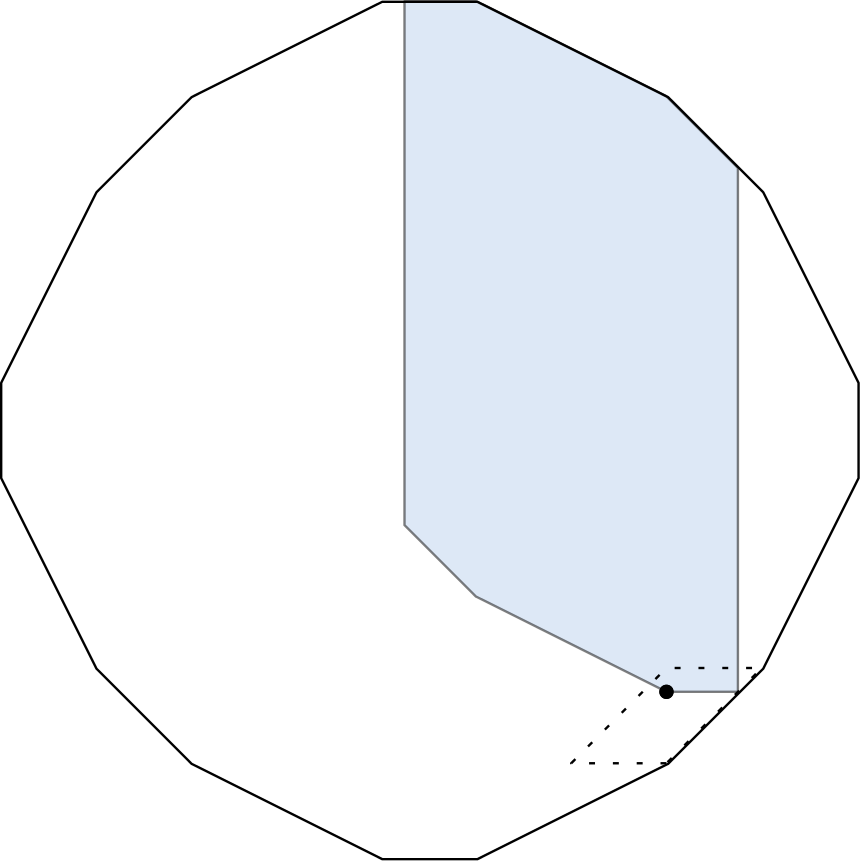}} & 		
 $s(x,y)$ & $=$ & $x^a y^b$ & \multirow{5}{*}{$11-\a+2\b$} \\
 & $1 \leq \b \leq 2$, & & & & $(x-1)^{2-\a+3\b}$ & \\
 & $1 \leq \a-3\b \leq 2 $ & & & & $(y-1)^{2-\b}$ & \\
 & & & & & $(x^2y-1)^{10-3\b}$ &\\ 
 & & & & & $(xy-1)^{3b-3}$ & \\
 & & & & & &\\
 \hline
 & & & & & &\\
 \multirow{4}{*}{3} &  & \multirow{4}{*}{\includegraphics[width=1in]{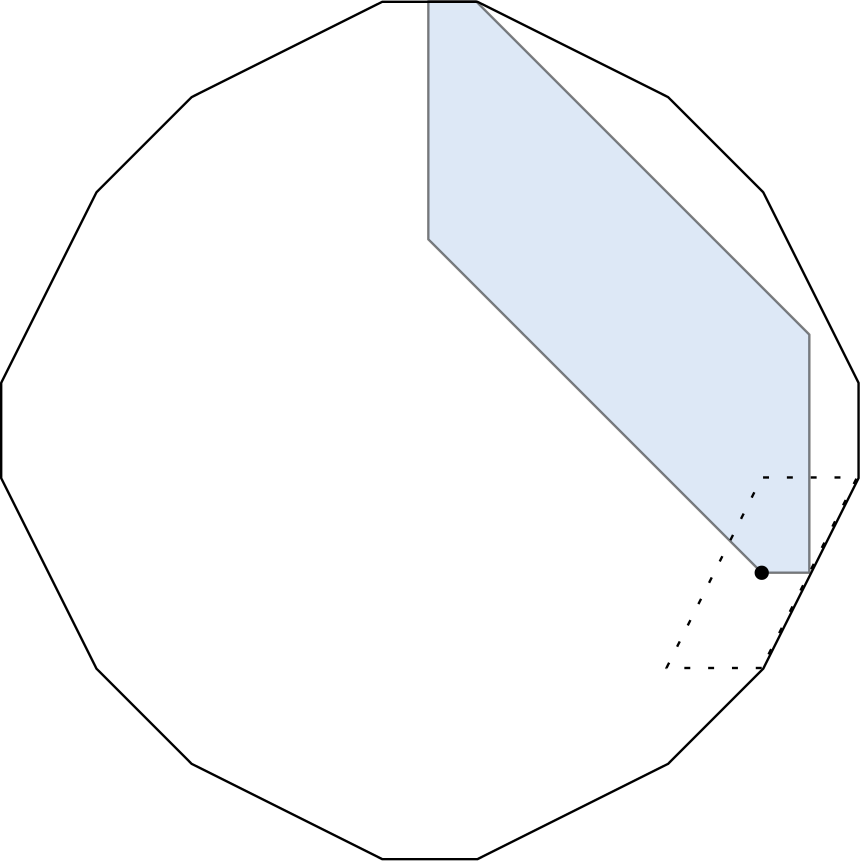}} & 		
 $s(x,y)$ & $=$ & $x^a y^b$ & \multirow{4}{*}{$12-\a+\frac{3}{2}\b$} \\
 & $2 \leq \b \leq 4$, & & & & $(x-1)^{3-\a+ \frac{5}{2}\b}$ & \\
 & $4 \leq 2\a-5\b \leq 6 $ & & & & $(x^2y-1)^{7-\frac{3}{2}\b}$  & \\
 & & & & & $(xy-1)^{2+\frac{1}{2}\b}$ &\\ 
 & & & & &  & \\
 \hline
 & & & & & &\\
 \multirow{4}{*}{4} &  & \multirow{4}{*}{\includegraphics[width=1in]{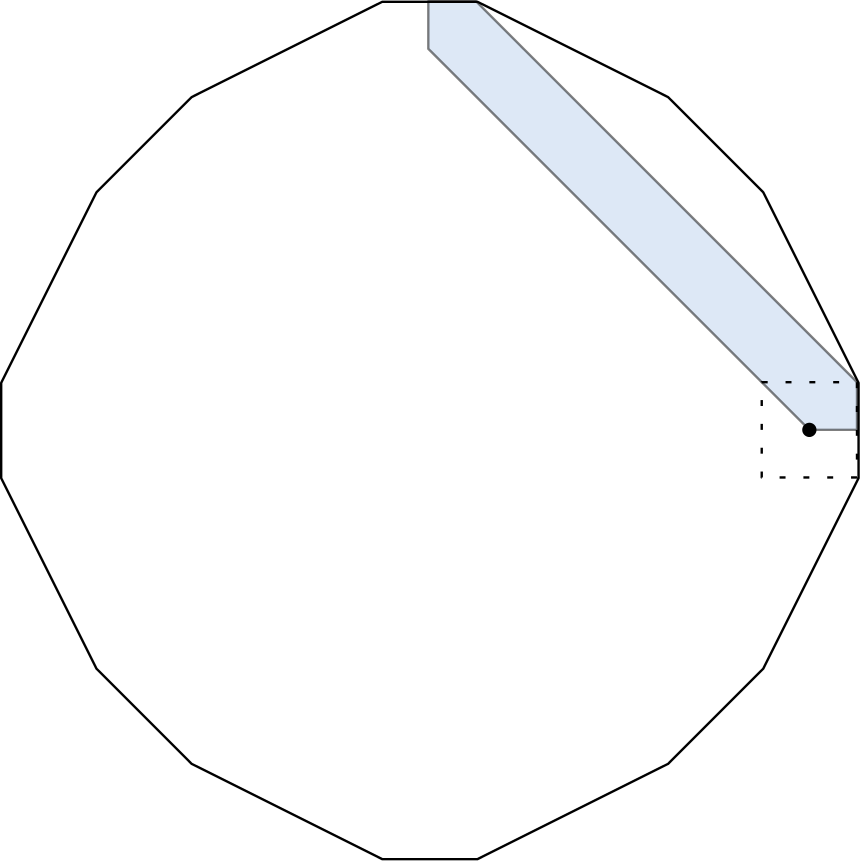}} & 		
 $s(x,y)$ & $=$ & $x^a y^b$ & \multirow{4}{*}{$14-\a+\b$} \\
 & $4 \leq \b \leq 5$, & & & & $(x-1)^{5-\a+2\b}$ & \\
 & $4 \leq \a-2\b \leq 5 $ & & & & $(x^2y-1)^{5-\b}$  & \\
 & & & & & $(xy-1)^{4}$ &\\ 
 & & & & &  & \\
 \hline
\end{tabular}}
\end{table}

\begin{table}
\makebox[\linewidth]{
 \begin{tabular}{|c|| >{\centering\arraybackslash}m{3cm}  >{\centering\arraybackslash}m{4cm} c l l c|} 
 \hline
 Region & Inequalities & Newton Polytope & \multicolumn{3}{c}{Section} & $\ord_\gen(s)$ \\ [0.5ex] 
 \hline\hline
 & & & & & &\\
 \multirow{4}{*}{5} &  & \multirow{4}{*}{\includegraphics[width=1in]{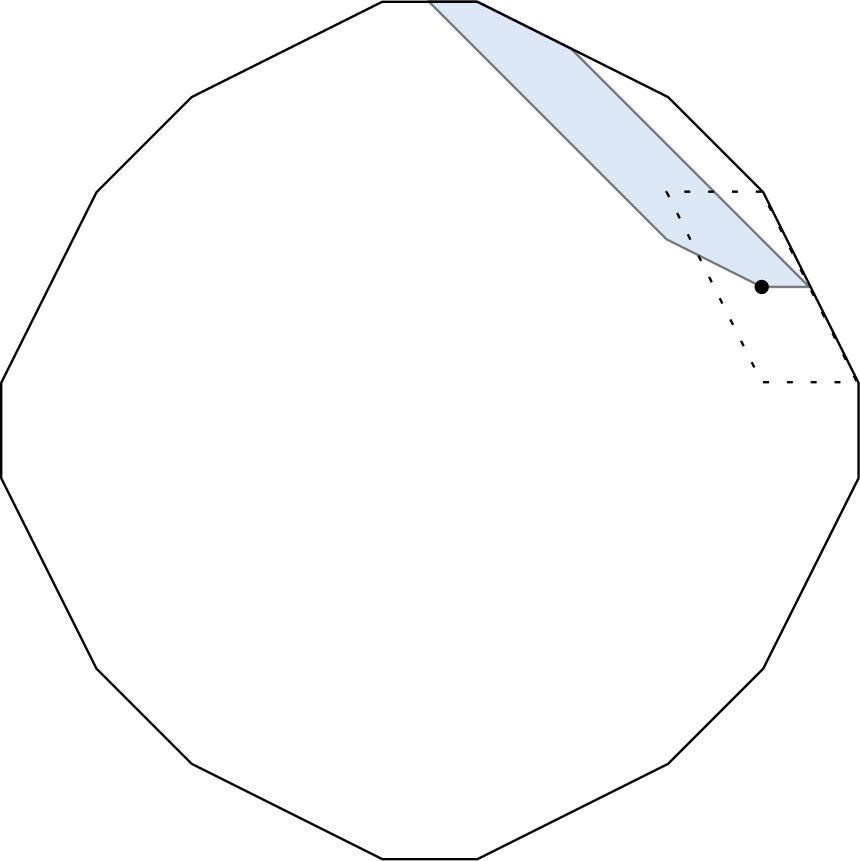}} & 		
 $s(x,y)$ & $=$ & $x^a y^b$ & \multirow{4}{*}{$\frac{33}{2}-\a+\frac{1}{2}\b$} \\
 & $1 \leq \b \leq 2$, & & & & $(x-1)^{\frac{15}{2}-\a+\frac{3}{2}\b}$ & \\
 & $1 \leq \a-3\b \leq 2$ & & & & $(y-1)^{-\frac{5}{2}+\frac{1}{2}\b}$ & \\
 & & & & & $(xy-1)^{\frac{23}{2}-\frac{3}{2}\b}$ & \\
  & & & & & &\\ 
 \hline
  & & & & & &\\
 \multirow{4}{*}{6} &  & \multirow{4}{*}{\includegraphics[width=1in]{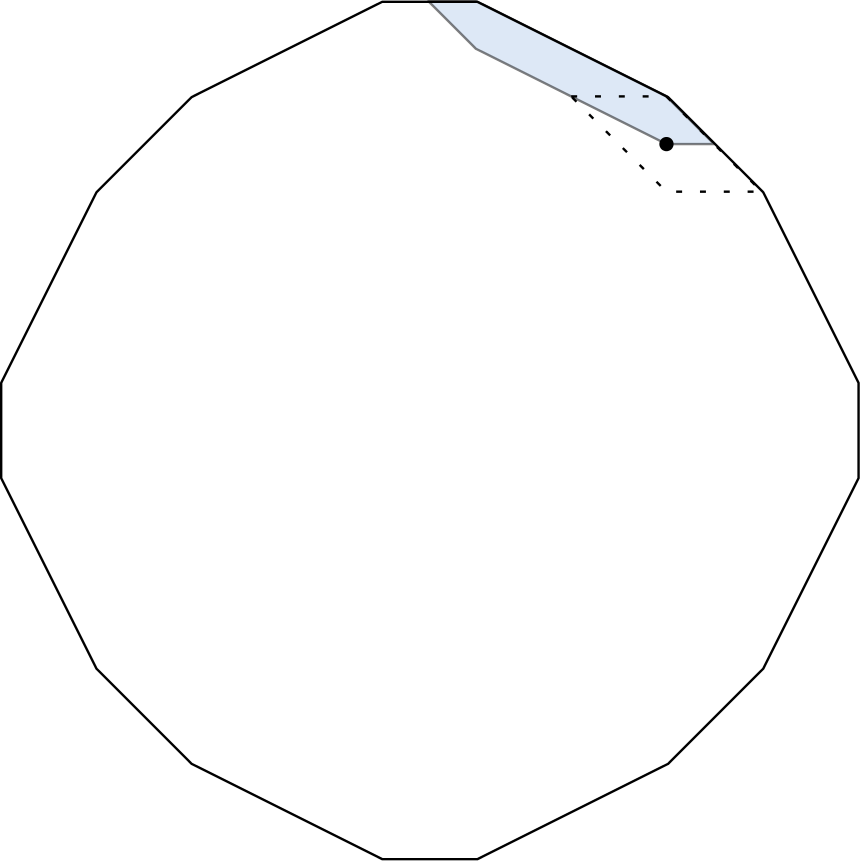}} & 		
 $s(x,y)$ & $=$ & $x^a y^b$ & \multirow{4}{*}{$20 -\a$} \\
 & $7 \leq \b \leq 8$, & & & & $(x-1)^{11-\a+\b}$ & \\
 & $10 \leq \a-\b \leq 11$ & & & & $(y-1)$ & \\
 & & & & & $(xy-1)^{8-\b}$ & \\
  & & & & & &\\ 
 \hline
 & & & & & &\\
 \multirow{4}{*}{7} &  & \multirow{4}{*}{\includegraphics[width=1in]{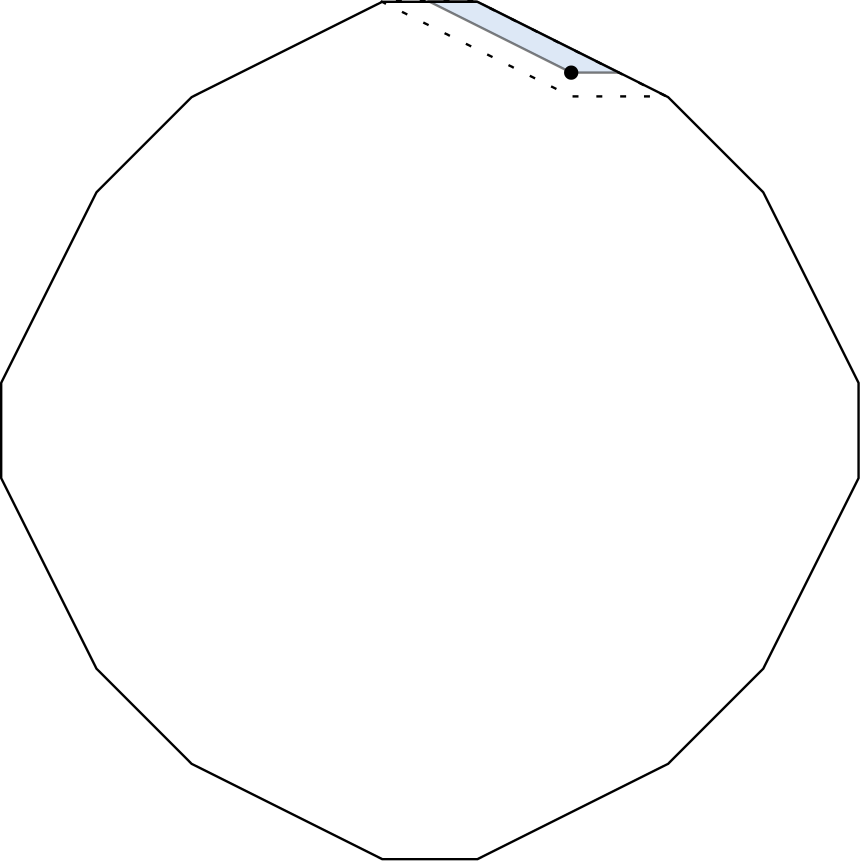}} & 		
 $s(x,y)$ & $=$ & $x^a y^b$ & \multirow{4}{*}{$28-\a-\b$} \\
 & $8 \leq \b \leq 9$, & & & & $(x-1)^{19-\a} $ & \\
 & $18 \leq \a \leq 19 $ & & & & $(y-1)^{9-\b} $ & \\
 & & & & & &\\ 
 & & & & & & \\
 \hline
 & & & & & &\\
 \multirow{4}{*}{8} &  & \multirow{4}{*}{\includegraphics[width=1in]{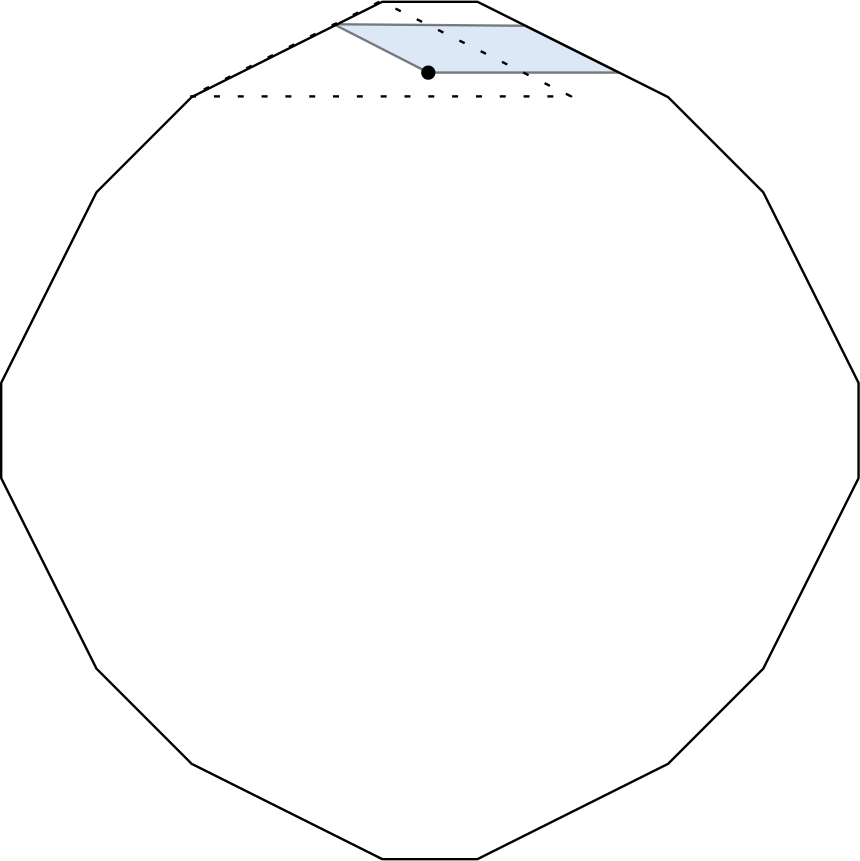}} & 		
 $s(x,y)$ & $=$ & $x^a y^b$ & \multirow{4}{*}{$\frac{47}{2}-\frac{3}{4}\a-\b$} \\
 & $8 \leq \b \leq 9$, & & & & $(x-1)^{19-a}$ & \\
 & $\a \leq 18 $, & & & & $(y-1)^{\frac{9}{2}+\frac{1}{4}\a-\b}$ & \\
 & $-\a+4\b \leq 18$ & & & &  & \\
  & & & & & &\\ 
 \hline
   & & & & & &\\
 \multirow{4}{*}{9} &  & \multirow{4}{*}{\includegraphics[width=1in]{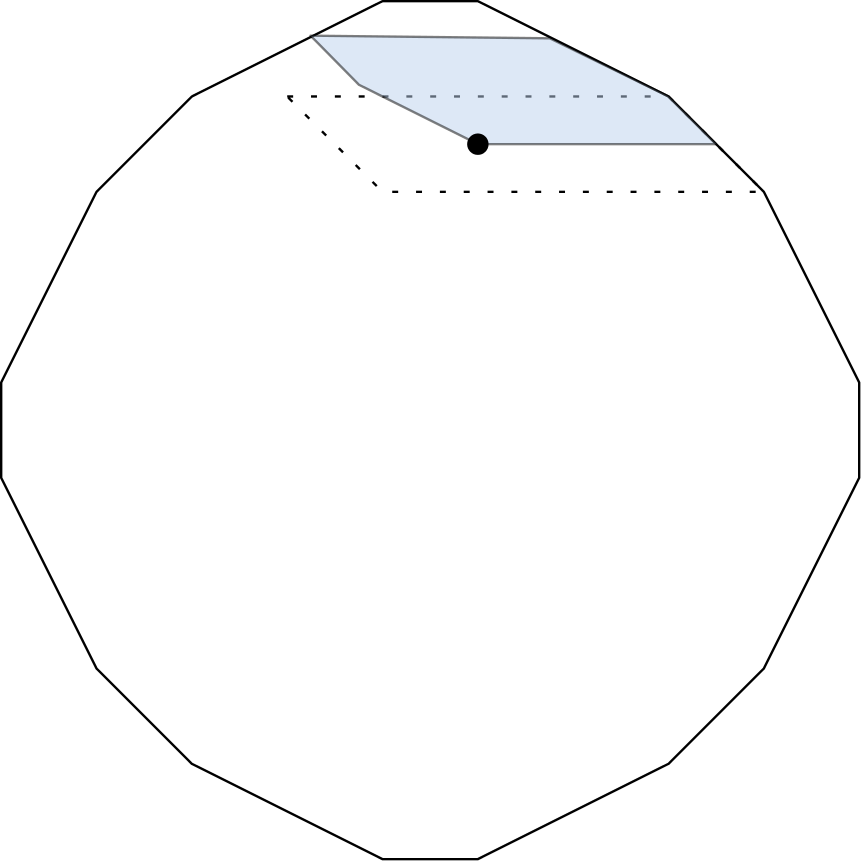}} & 		
 $s(x,y)$ & $=$ & $x^a y^b$ & \multirow{4}{*}{$\frac{35}{2}-\frac{3}{4}\a-\frac{1}{4}\b$} \\
 & $7 \leq \b \leq 8$, & & & & $(x-1)^{11-\a+\b}$ & \\
 & $6 \leq \a-\b \leq 10 $ & & & & $(y-1)^{-3+\frac{1}{4}\a-\frac{1}{4}\b}$ & \\
 & & & & & $(xy-1)^{8-\b}$ & \\
  & & & & & &\\ 
 \hline
 \end{tabular}}
 \end{table}

\begin{table}
\makebox[\linewidth]{
 \begin{tabular}{|c||  >{\centering\arraybackslash}m{3cm}  >{\centering\arraybackslash}m{4cm}  c l l  c|} 
 \hline
 Region & Inequalities & Newton Polytope & \multicolumn{3}{c}{Section} & $\ord_\gen(s)$ \\ [0.5ex] 
 \hline\hline
    & & & & & &\\
 \multirow{6}{*}{10} &  & \multirow{6}{*}{\includegraphics[width=1in]{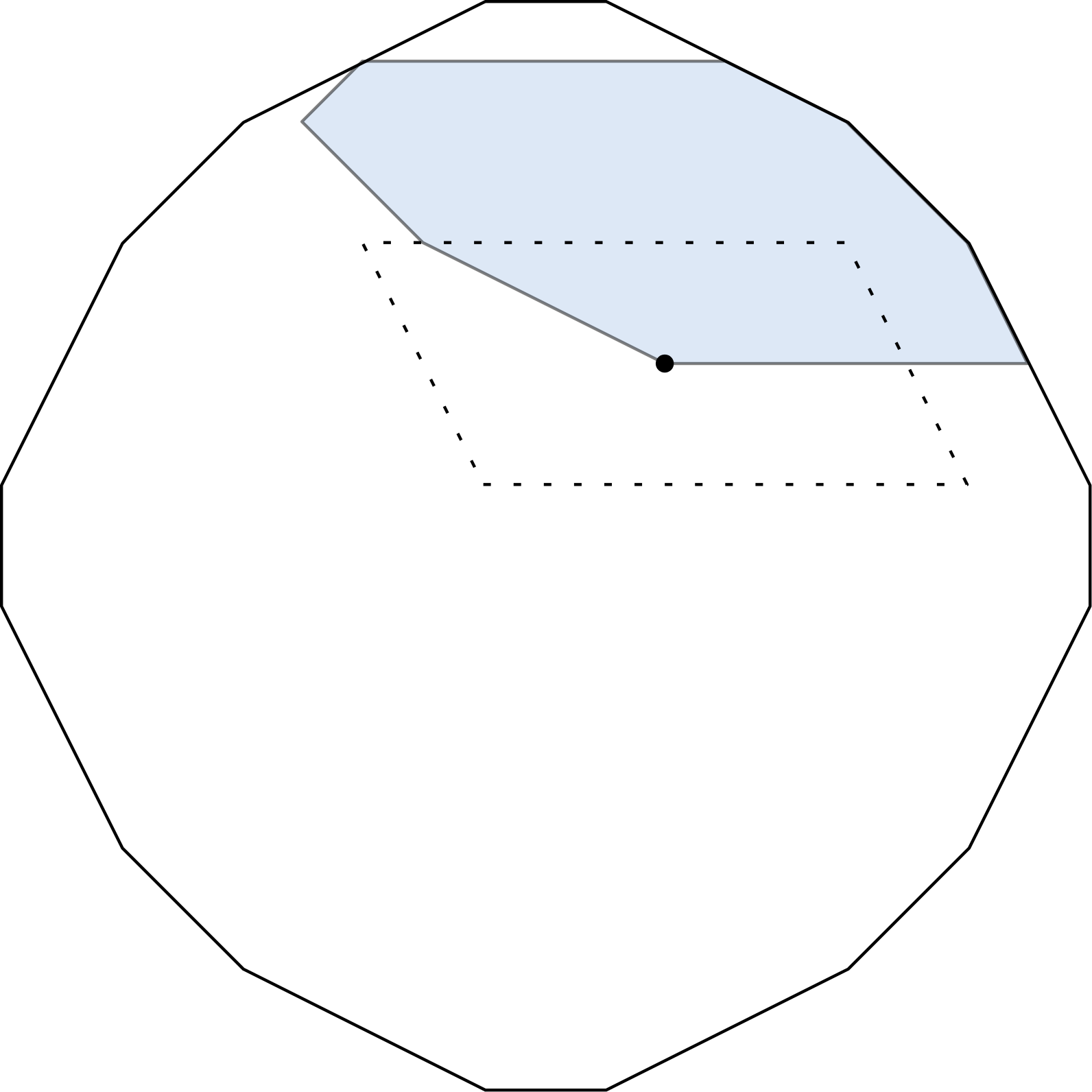}} & 		
 $s(x,y)$ & $=$ & $x^a y^b$ & \multirow{6}{*}{$\frac{119}{8}-\frac{3}{4}\a+\frac{1}{8}\b$} \\
 &  & & & & $(x-1)^{\frac{15}{2}-\a+\frac{3}{2}\b}$ & \\
 & $5 \leq \b \leq 7$, & & & & $(y-1)^{-\frac{5}{8}+\frac{1}{4}\a-\frac{3}{8}\b}$ & \\
 & $5 \leq 2\a-3\b \leq 13 $& & & & $(xy-1)$ & \\
 & & & & & $(x^3y^2-3xy$ &\\
 & & & & & $+y+1)^{\frac{7}{2}-\frac{1}{2}\b}$ &\\
  & & & & & &\\
   \hline
    & & & & & &\\
 \multirow{5}{*}{11} &  & \multirow{5}{*}{\includegraphics[width=1in]{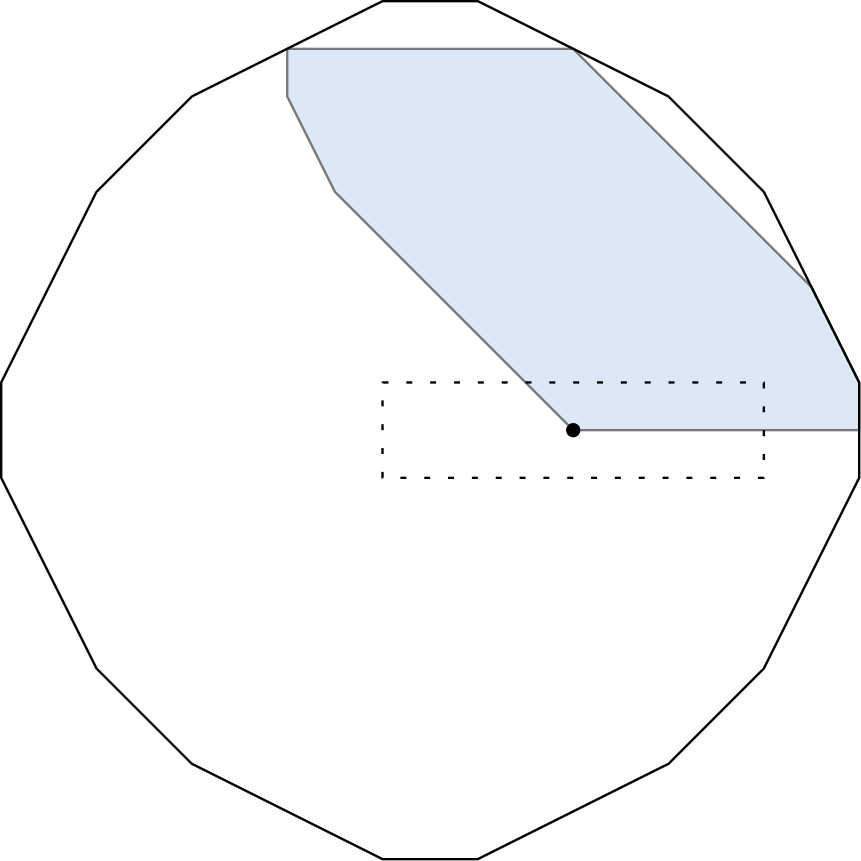}} & 		
 $s(x,y)$ & $=$ & $x^a y^b$ & \multirow{5}{*}{$13-\frac{3}{4}\a+\frac{1}{2}\b$} \\
 &  & & & & $(x-1)^{5-\a+2\b}$ & \\
 & $4 \leq \b \leq 5$, & & & & $(x^2y-1)^{5-\b}$ & \\
 & $0 \leq \a-2\b \leq 4 $ & & & & $(xy-1)^{1+\frac{3}{4}\a-\frac{3}{2}\b}$ & \\
  & & & & & $(x^3y^2-3xy$ &\\
  & & & & & $+y+1)^{1-\frac{1}{4}\a +\frac{1}{2}\b}$ &\\
  & & & & & &\\
   \hline
 & & & & & &\\
 \multirow{7}{*}{12} &  & \multirow{7}{*}{\includegraphics[width=1in]{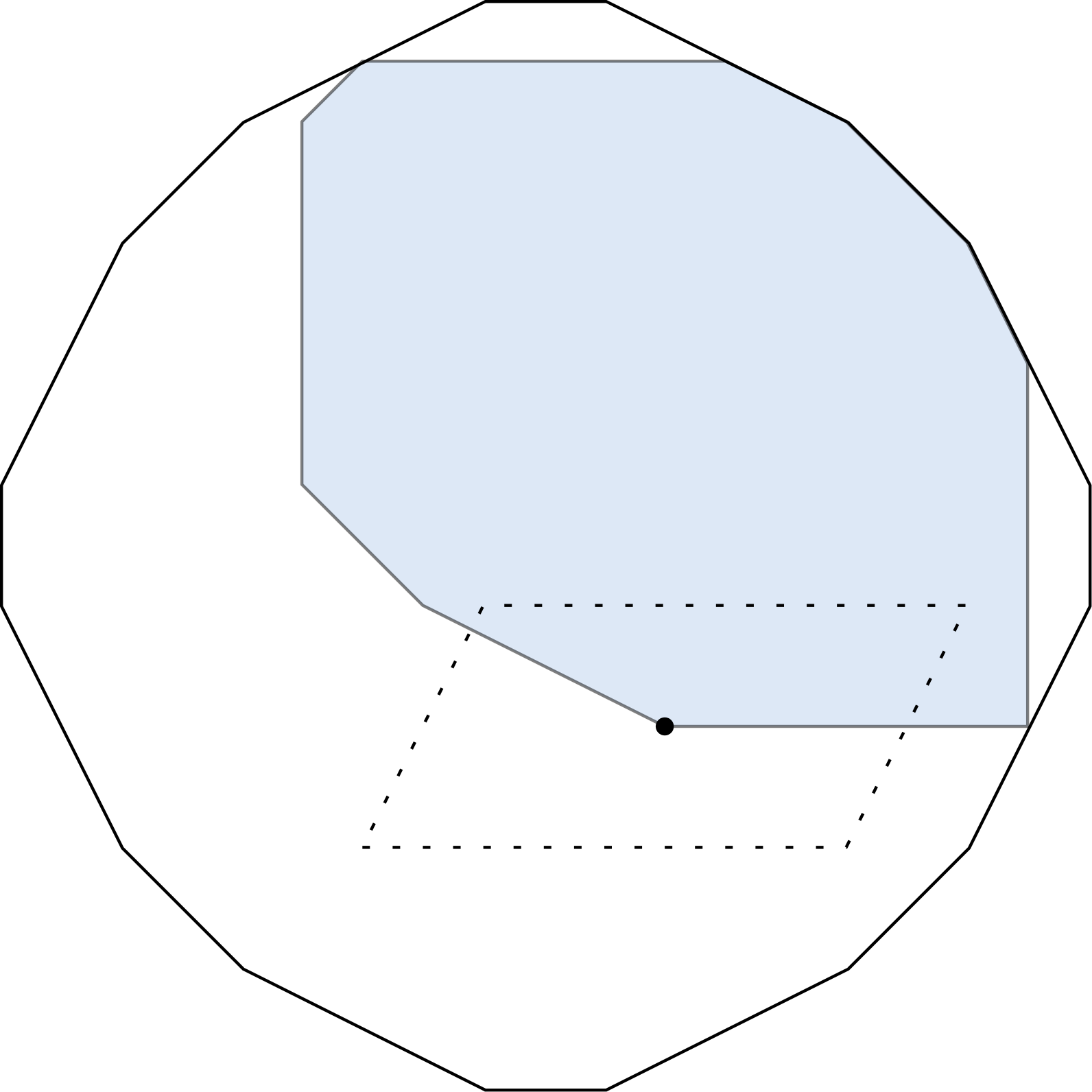}} & 		
 $s(x,y)$ & $=$ & $x^a y^b$ & \multirow{7}{*}{$\frac{23}{2}-\frac{3}{4}\a+\frac{7}{8}\b$} \\
 &  & & & & $(x-1)^{3-\a+\frac{5}{2}\b}$ & \\
   & $2 \leq \b \leq 4$, & & & & $(y-1)^{\frac{1}{2}+\frac{1}{4}\a-\frac{5}{8}\b}$ &\\
 & $-4 \leq -2\a+5\b $, & & & & $(x^2y-1)^{9-2\b}$ & \\
 & $-2\a+5\b \leq 4$ & & & & $(xy-1)$ & \\
 & & & & & $(x^3y^2-3xy$ &\\
 & & & & & $+y+1)^{-1+\frac{1}{2}\b}$ &\\
& & & & & &\\
\hline
  & & & & & &\\
 \multirow{5}{*}{13} &  & \multirow{5}{*}{\includegraphics[width=1in]{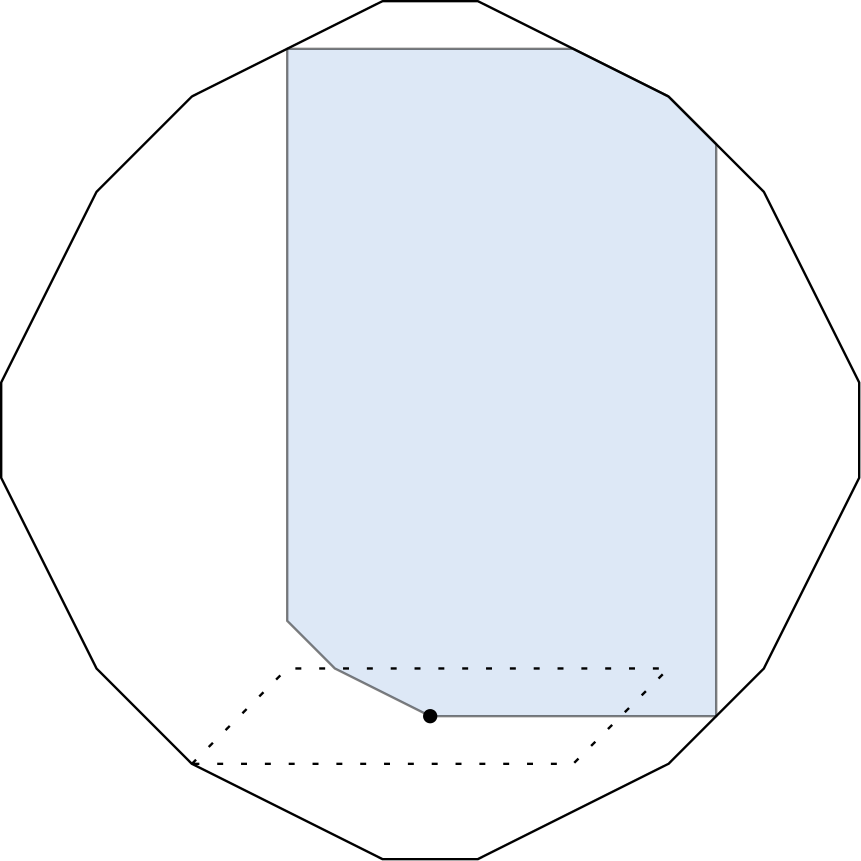}} & 		
 $s(x,y)$ & $=$ & $x^a y^b$ & \multirow{5}{*}{$\frac{43}{4}-\frac{3}{4}\a+\frac{5}{4}\b$} \\
 & $1 \leq \b \leq 2$, & & & & $(x-1)^{2-\a+3\b} $ & \\
 & $-3 \leq \a-3\b \leq 1 $ & & & & $(y-1)^{\frac{3}{4}+ \frac{1}{4}\a-\frac{3}{4}\b} $ & \\
 & & & & & $(x^2y-1)^{9-2\b}$ &\\ 
 & & & & & $(xy-1)^{-1+\b}$ & \\
  & & & & & &\\
 \hline
\end{tabular}}
\end{table}

  \begin{table}[ht!]
\makebox[\linewidth]{
 \begin{tabular}{|c||  >{\centering\arraybackslash}m{3cm}  >{\centering\arraybackslash}m{4cm}  c l l  c|} 
 \hline
 Region & Inequalities & Newton Polytope & \multicolumn{3}{c}{Section} & $\ord_\gen(s)$ \\ [0.5ex] 
 \hline\hline
 & & & & & &\\
 \multirow{4}{*}{14} &  & \multirow{5}{*}{\includegraphics[width=1in]{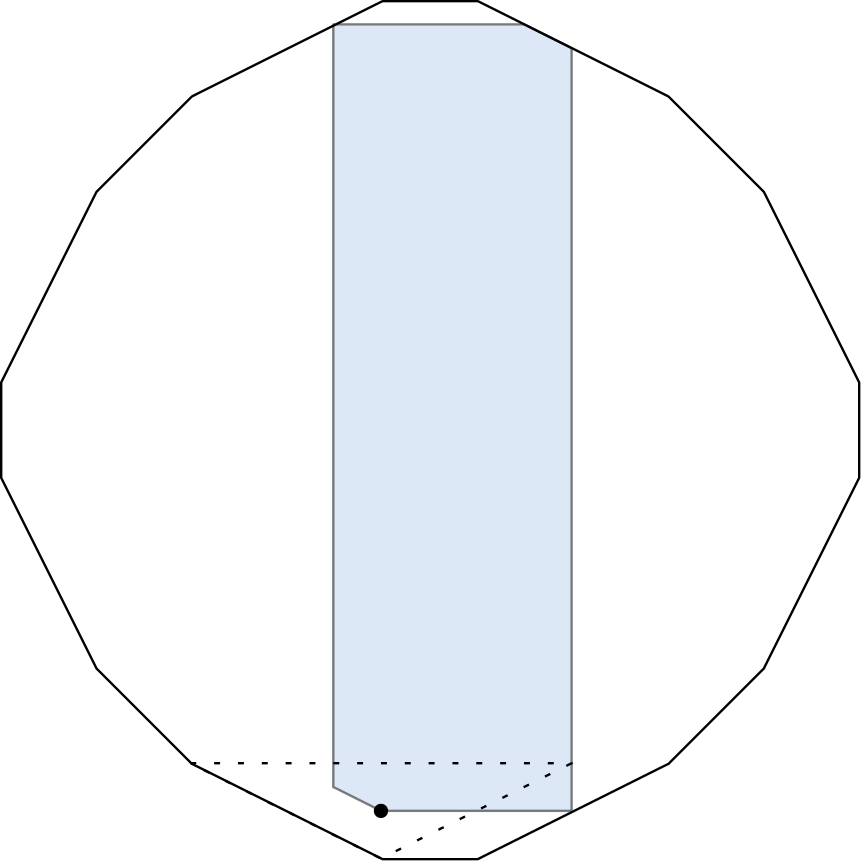}} & 		
 $s(x,y)$ & $=$ & $x^a y^b$ & \multirow{4}{*}{$10-\frac{3}{4}\a+2\b$} \\
 & $5 \leq \b \leq 7$, & & & & $(x-1)^{1-\a+4\b}$ & \\
 & $5 \leq 2\a-3\b \leq 13 $ & & & & $(y-1)^{\frac{1}{4}\a} $ & \\
 & & & & & $(x^2y-1)^{9-2\b}$ &\\ 
  & & & & & $(xy-1)^{-1+\b}$ & \\
   & & & & &  & \\
 \hline
 & & & & & &\\
 \multirow{4}{*}{15} &  & \multirow{4}{*}{\includegraphics[width=1in]{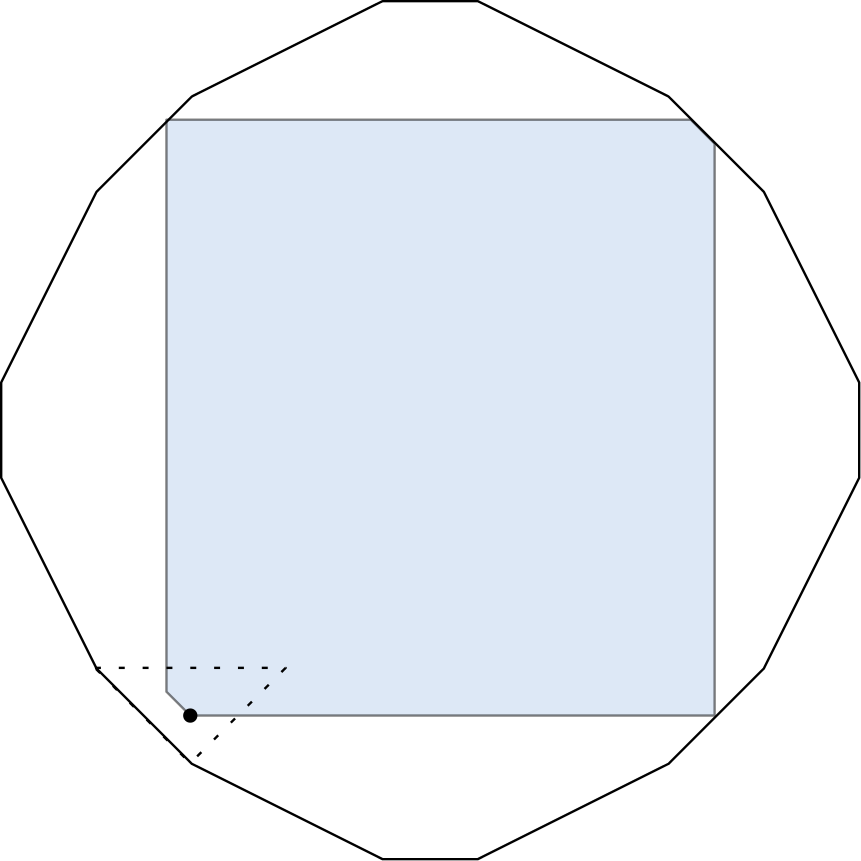}} & 		
 $s(x,y)$ & $=$ & $x^a y^b$ & \multirow{4}{*}{$\frac{23}{2}-\frac{1}{2}\a+\frac{1}{2}\b$} \\
 & $1 \leq \b \leq 2$, & & & & $(x-1)^{2-\a+3\b} $ & \\
 & $-\a+\b \leq 1 $, & & & & $(x^2y-1)^{9-2\b}$ & \\
 & $-a+3\b \geq 3$ & & & & $(xy-1)^{\frac{1}{2}+\frac{1}{2}\a-\frac{1}{2}\b}$ &\\ 
 & & & & &  & \\
  \hline
& & & & & &\\
 \multirow{4}{*}{19} &  & \multirow{4}{*}{\includegraphics[width=1in]{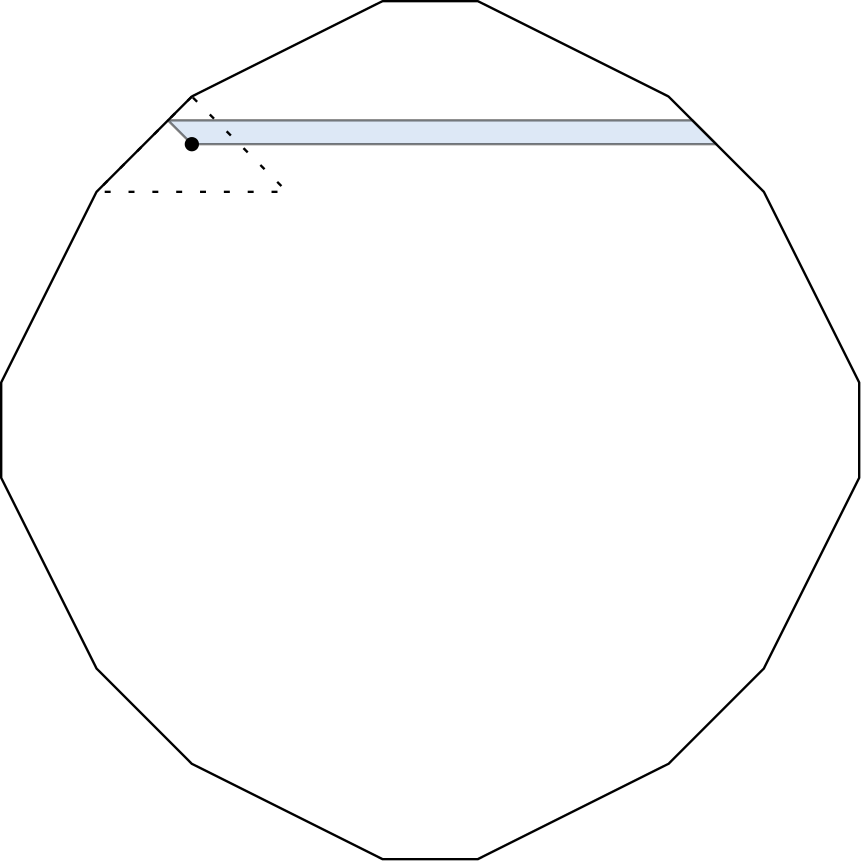}} & 		
 $s(x,y)$ & $=$ & $x^a y^b$ & \multirow{4}{*}{$16-\frac{1}{2}\a-\frac{1}{2}\b$} \\
 & $7 \leq \b \leq 8$, & & & & $(x-1)^{11-\a+\b}$ & \\
 & $\a-\b \leq 6$, & & & & $(xy-1)^{5+\frac{1}{2}\a-\frac{3}{2}\b}$ & \\
 & $-\a+3\b \leq 10$ & & & & &\\ 
 & & & & &  & \\
  \hline
  \end{tabular}}
  \caption{Sections $s$ such that $s^k$ are global sections of $\sps(\var,\she_\var(k\ds))$ that realize lower bounds for the order of vanishing $\ord_\gen$ for respective $k \in \N$.}\label{table_2}
  \end{table}
\newpage
$ $
\newpage

\bibliographystyle{amsalpha}
\bibliography{references}

\newcommand{\etalchar}[1]{$^{#1}$}
\providecommand{\bysame}{\leavevmode\hbox to3em{\hrulefill}\thinspace}
\providecommand{\MR}{\relax\ifhmode\unskip\space\fi MR }
\providecommand{\MRhref}[2]{%
  \href{http://www.ams.org/mathscinet-getitem?mr=#1}{#2}
}
\providecommand{\href}[2]{#2}
\begin{thebibliography}{DKMS16b}

\bibitem[And13]{And}
Dave Anderson, \emph{{Okounkov bodies and toric degenerations}}, Math. Ann.
  \textbf{356} (2013), no.~3, 1183--1202.

\bibitem[Bau97]{BauerQuartic}
Thomas Bauer, \emph{{Seshadri constants of quartic surfaces}}, Math. Ann.
  \textbf{309} (1997), no.~3, 475--481.

\bibitem[Bau98]{BauerAbelian}
\bysame, \emph{{Seshadri constants and periods of polarized abelian
  varieties}}, Math. Ann. \textbf{312} (1998), no.~4, 607--623.

\bibitem[Bau99]{BauerSeshadri}
\bysame, \emph{{Seshadri constants on algebraic surfaces}}, Math. Ann.
  \textbf{313} (1999), no.~3, 547--583.

\bibitem[Bau09]{bauer2009simple}
\bysame, \emph{{A simple proof for the existence of Zariski decompositions on
  surfaces}}, J. Algebraic Geom. \textbf{18} (2009), no.~4, 789--793.

\bibitem[BC11]{BC}
S{\'e}bastien Boucksom and Huayi Chen, \emph{{Okounkov bodies of filtered
  linear series}}, Compos. Math. \textbf{147} (2011), no.~4, 1205--1229.

\bibitem[BKMS15]{BKMS}
S{\'e}bastien Boucksom, Alex K{\"u}ronya, Catriona Maclean, and Tomasz
  Szemberg, \emph{{Vanishing sequences and Okounkov bodies}}, Math. Ann.
  \textbf{361} (2015), 811--834.

\bibitem[BKS04]{BKS}
Thomas Bauer, Alex K{\"u}ronya, and Tomasz Szemberg, \emph{{Zariski chambers,
  volumes, and stable base loci}}, J. Reine Angew. Math. \textbf{576} (2004),
  209--233.

\bibitem[BKS20]{BatyrevKasprzykSchaller}
Victor Batyrev, Alexander~M. Kasprzyk, and Karin Schaller, \emph{{On the Fine
  Interior of Three-dimensional Canonical Fano Polytopes}}, {Interactions with
  Lattice Polytopes} (Alexander~M. Kasprzyk and Benjamin Nill, eds.), Springer,
  (in press) 2020.

\bibitem[Bou14]{Bou14}
S{\'e}bastien Boucksom, \emph{{Corps d'Okounkov (d'apr{\`e}s Okounkov,
  Lazarsfeld-Mustaţă et Kaveh-Khovanskii)}}, Ast{\'e}risque (2014), no.~361,
  Ex No. 1059, vii, 1--41.

\bibitem[CFK{\etalchar{+}}17]{CFKLRS}
Ciro Ciliberto, Michal Farnik, Alex K{\"u}ronya, Victor Lozovanu, Joaquim
  Ro{\'e}, and Constantin Shramov, \emph{{Newton-Okounkov bodies sprouting on
  the valuative tree}}, Rend. Circ. Mat. Palermo II \textbf{66} (2017), no.~2,
  161--194.

\bibitem[CL12]{CasciniLazic}
Paolo Cascini and Vladimir Lazi{\'c}, \emph{{New outlook on the minimal model
  program, I}}, Duke Math. J. \textbf{161} (2012), no.~12, 2415--2467.

\bibitem[CLS11]{cox2011toric}
David~A. Cox, John~B. Little, and Henry~K. Schenck, \emph{{Toric varieties}},
  {American Mathematical Society}, Providence, RI, 2011.

\bibitem[CLTU]{castravet2020blownup}
Ana-Maria Castravet, Antonio Laface, Jenia Tevelev, and Luca Ugaglia,
  \emph{{Blown-up toric surfaces with non-polyhedral effective cone}}.

\bibitem[Cox95]{cox1995homogeneous}
David~A. Cox, \emph{{The homogeneous coordinate ring of a toric variety}}, J.
  Algebraic Geom. \textbf{4} (1995), no.~1, 17--50.

\bibitem[Dem92]{Dem92}
Jean-Pierre Demailly, \emph{{Singular hermitian metrics on positive line
  bundles}}, {Complex Algebraic Varieties} (Klaus Hulek, Thomas Peternell,
  Michael Schneider, and Frank-Olaf Schreyer, eds.), Springer, Berlin, 1992,
  pp.~87--104.

\bibitem[DHNP13]{polyhedralAdjunction}
Sandra {Di Rocco}, Christian Haase, Benjamin Nill, and Andreas Paffenholz,
  \emph{{Polyhedral adjunction theory}}, Algebr. Number Theory \textbf{7}
  (2013), no.~10, 2417--2446.

\bibitem[DKMS16a]{DKMS}
Marcin Dumnicki, Alex K{\"u}ronya, Catriona Maclean, and Tomasz Szemberg,
  \emph{{Rationality of Seshadri constants and the
  Segre-Harbourne-Gimigliano-Hirschowitz conjecture}}, Adv. Math. \textbf{303}
  (2016), 1162--1170.

\bibitem[DKMS16b]{DKMS2}
\bysame, \emph{{Seshadri constants via functions on Newton-Okounkov bodies}},
  Math. Nachr. \textbf{289} (2016), no.~17-18, 2173--2177.

\bibitem[Don02]{Don}
Simon~K. Donaldson, \emph{{Scalar Curvature and Stability of Toric Varieties}},
  J. Differential Geometry \textbf{62} (2002), no.~2, 289--349.

\bibitem[EH]{escobar2019wall}
Laura Escobar and Megumi Harada, \emph{{Wall-crossing for Newton-Okounkov
  bodies and the tropical Grassmannian}}.

\bibitem[ELM{\etalchar{+}}06]{ELMNP}
Lawrence Ein, Robert~K. Lazarsfeld, Mircea Mustață, Michael Nakamaye, and
  Mihnea Popa, \emph{{Asymptotic invariants of base loci}}, Ann. inst. Fourier
  \textbf{56} (2006), no.~6, 1701--1734.

\bibitem[Fin83]{Fine}
Jonathan Fine, \emph{{Resolution and completion of algebraic varieties}}, Ph.D.
  thesis, {University of Warwick}, 1983.

\bibitem[Fuj79]{fujita1979zariski}
Takao Fujita, \emph{{On Zariski problem}}, Proc. Japan Acad., Ser. A
  \textbf{55} (1979), no.~3, 106--110.

\bibitem[Fuj16]{KFuj}
Kento Fujita, \emph{{On K-stability and the volume functions of
  $\mathbb{Q}$-Fano varieties}}, Proc. London Math. Soc. \textbf{113} (2016),
  no.~5, 541--582.

\bibitem[Ful71]{FulkersonBlockingAntiBlocking}
Delbert~R. Fulkerson, \emph{{Blocking and anti-blocking pairs of polyhedra}},
  Math. Progam. \textbf{1} (1971), no.~1, 168--194.

\bibitem[Ful72]{FulkersonAntiBlocking}
\bysame, \emph{{Anti-blocking polyhedra}}, J. Comb. Theory, Ser. B \textbf{12}
  (1972), no.~1, 50--71.

\bibitem[Ful93]{fulton1993introduction}
William Fulton, \emph{{Introduction to toric varieties}}, {Princeton University
  Press}, Princeton, NJ, 1993.

\bibitem[GK13]{GalatiKnutsen}
Concettina Galati and Andreas~L. Knutsen, \emph{{Seshadri Constants of $K_3$
  Surfaces of Degrees 6 and 8}}, Int. Math. Res. Not. \textbf{2013} (2013),
  no.~17, 4072--4084.

\bibitem[HHK]{HHK}
Mark Hamilton, Megumi Harada, and Kiumars Kaveh, \emph{{Convergence of
  polarizations, toric degenerations, and Newton-Okounkov bodies}}.

\bibitem[HK15]{HK}
Megumi Harada and Kiumars Kaveh, \emph{{Integrable systems, toric degenerations
  and Okounkov bodies}}, Inventiones Math. \textbf{202} (2015), no.~3,
  927--985.

\bibitem[Ito13]{ito13}
Atsushi Ito, \emph{{Okounkov bodies and Seshadri constants}}, Adv. Math.
  \textbf{241} (2013), 246--262.

\bibitem[Ito14]{ito14}
\bysame, \emph{{Seshadri constants via toric degenerations}}, J. Reine Angew.
  Math. \textbf{695} (2014), 151--174.

\bibitem[KK12]{kaveh2012newton}
Kiumars Kaveh and Askold~G. Khovanskii, \emph{{Newton-Okounkov bodies,
  semigroups of integral points, graded algebras and intersection theory}},
  Ann. Math. (2) \textbf{176} (2012), no.~2, 925--978.

\bibitem[KL17a]{KL_inf}
Alex K{\"u}ronya and Victor Lozovanu, \emph{{Infinitesimal Newton--Okounkov
  bodies and jet separation}}, Duke Math. J. \textbf{166} (2017), no.~7,
  1349--1376.

\bibitem[KL17b]{KL_noninf}
\bysame, \emph{{Positivity of Line Bundles and Newton-Okounkov Bodies}}, Doc.
  Math. \textbf{22} (2017), 1285--1302.

\bibitem[KL18]{KL_Geom}
\bysame, \emph{{Geometric aspects of Newton--Okounkov bodies}}, Banach Center
  Publ. \textbf{116} (2018), 137--212.

\bibitem[KLM12]{KLM}
Alex K{\"u}ronya, Victor Lozovanu, and Catriona Maclean, \emph{{Convex bodies
  appearing as Okounkov bodies of divisors}}, Adv. Math. \textbf{229} (2012),
  no.~5, 2622--2639.

\bibitem[KM19]{KavehManon}
Kiumars Kaveh and Christopher Manon, \emph{{Khovanskii Bases, Higher Rank
  Valuations, and Tropical Geometry}}, SIAM J. Appl. Algebra Geometry
  \textbf{3} (2019), no.~2, 292--336.

\bibitem[KMM87]{kawamata1987}
Yujiro Kawamata, Katsumi Matsuda, and Kenji Matsuki, \emph{{Introduction to the
  Minimal Model Problem}}, {Algebraic Geometry, Sendai, 1985} (Tokyo, Japan)
  (Tadao Oda, ed.), {Advanced Studies in Pure Mathematics}, {Mathematical
  Society of Japan}, 1987, pp.~283--360.

\bibitem[KMR]{KMR}
Alex K{\"u}ronya, Catriona Maclean, and Joaquim Ro{\'e}, \emph{{Concave
  transforms of filtrations and rationality of Seshadri constants}}.

\bibitem[KMS]{KMSwB}
Alex K{\"u}ronya, Catriona Maclean, and Tomasz Szemberg, \emph{{Functions on
  Okounkov bodies coming from geometric valuations (with an appendix by
  S{\'e}bastien Boucksom)}}.

\bibitem[Knu08]{Knutsen}
Andreas~L. Knutsen, \emph{{A Note on Seshadri constants on general $K_3$
  surfaces}}, C. R. Acad. Sci. Paris, Ser. 1 \textbf{346} (2008), no.~19-20,
  1079--1081.

\bibitem[Laz04]{lazarsfeld2004positivity}
Robert~K. Lazarsfeld, \emph{{Positivity in Algebraic Geometry I: Classical
  Setting: Line Bundles and Linear Series}}, Springer, Berlin, 2004.

\bibitem[LM09]{MR2571958}
Robert~K. Lazarsfeld and Mircea Mustață, \emph{{Convex bodies associated to
  linear series}}, Ann. Sci. {\'E}c. Norm. Sup{\'e}r. (4) \textbf{42} (2009),
  no.~5, 783--835.

\bibitem[Lun20]{lundman2015computing}
Anders Lundman, \emph{{Computing Seshadri constants on smooth toric surfaces}},
  {Interactions with Lattice Polytopes} (Alexander~M. Kasprzyk and Benjamin
  Nill, eds.), Springer, (in press) 2020.

\bibitem[MR15]{McKR}
David McKinnon and Mike Roth, \emph{{Seshadri constants, diophantine
  approximation, and Roth's theorem for arbitrary varieties}}, Inventiones
  Math. \textbf{200} (2015), no.~2, 513--583.

\bibitem[Oko96]{Ok1}
Andrei Okounkov, \emph{{Brunn-Minkowski inequality for multiplicities}},
  Inventiones Math. \textbf{125} (1996), no.~3, 405--411.

\bibitem[Ro{\'e}16]{Roe}
Joaquim Ro{\'e}, \emph{{Local positivity in terms of Newton-Okounkov bodies}},
  Adv. Math. \textbf{301} (2016), 486--498.

\bibitem[San14]{sano2014seshadri}
Taro Sano, \emph{{Seshadri constants on rational surfaces with anticanonical
  pencils}}, J. Pure Appl. Algebra \textbf{218} (2014), no.~4, 602--617.

\bibitem[Sze01]{SzembergEnriques}
Tomasz Szemberg, \emph{{On positivity of line bundles on Enriques surfaces}},
  Trans. Am. Math. Soc. \textbf{353} (2001), no.~12, 4963--4972.

\bibitem[Sze12]{Szemberg}
\bysame, \emph{{Bounds on Seshadri Constants on Surfaces with Picard Number
  1}}, Commun. Algebra \textbf{40} (2012), no.~7, 2477--2484.

\bibitem[{Wit}12]{WN}
David {Witt Nystr{\"o}m}, \emph{{Test configurations and Okounkov bodies}},
  Compos. Math. \textbf{148} (2012), no.~6, 1736--1756.

\bibitem[{Wit}14]{MR3297156}
\bysame, \emph{{Transforming metrics on a line bundle to the Okounkov body}},
  Ann. Sci. {\'E}c. Norm. Sup{\'e}r. (4) \textbf{47} (2014), no.~6, 1111--1161.

\bibitem[Zar62]{10.2307/1970376}
Oscar Zariski, \emph{{The Theorem of Riemann-Roch for High Multiples of an
  Effective Divisor on an Algebraic Surface}}, Ann. Math. (2) \textbf{76}
  (1962), no.~3, 560--615.

\end{thebibliography}
\end{document}